\documentclass[12pt]{article}
\usepackage{amsmath,amsthm}
\usepackage{graphicx}
\usepackage{subfigure}
\usepackage{amssymb,latexsym}

\DeclareGraphicsExtensions{.pdf,.png,.jpg}

\setlength{\textheight}{9in}
\setlength{\topmargin}{.0in}
\setlength{\headheight}{0in}
\setlength{\headsep}{0in}
\setlength{\oddsidemargin}{0in}
\setlength{\textwidth}{6.5in}

\newtheorem{theorem}{Theorem}[section]
\newtheorem{lemma}[theorem]{Lemma}

\newtheorem{corollary}[theorem]{Corollary}
\newtheorem{definition}[theorem]{Definition}
\theoremstyle{remark}
\newtheorem{remark}[theorem]{Remark}

\newcommand{\R}{\mathbb R}

\renewcommand{\a}{\alpha}

\newcommand{\sech}{{\rm sech~}}
\newcommand{\eps}{\varepsilon}
\newcommand{\mm}{{\mu\over 2}}

\renewcommand{\gg}{\gamma}
\newcommand{\ggh}{\hat\gamma}

\newcommand{\vb}{\vec\beta}

\newcommand{\va}{\vec\alpha}



\begin{document}

\title{Smooth parametric dependence of asymptotics of the semiclassical focusing NLS}

\author{Sergey Belov \footnote{ Department of Mathematics, Rice University, Houston, TX 77005, e-mail: belov@rice.edu},
Stephanos Venakides \footnote{ Department of Mathematics, Duke University, Durham, NC 27708, e-mail: ven@math.duke.edu.
SV thanks NSF for supporting this work under grants NSF DMS-0707488 and NSF DMS-1211638.}}

\date{}
\maketitle

\begin{abstract}

We consider the one dimensional focusing (cubic) Nonlinear Schr\"odinger equation (NLS)
in the semiclassical limit with  exponentially decaying complex-valued initial data, whose phase is multiplied by a real parameter.
We prove smooth dependence of the asymptotic solution on the parameter. Numerical results
supporting our estimates of important quantities are presented.

\end{abstract}

\section{Introduction}

 We consider
the semiclassical focusing nonlinear Schr\" odinger (NLS) equation
\begin{equation}
i\eps\partial_t q+\eps^2\partial_x^2 q +2|q|^2q=0
\label{eq:1.1_NLS}
\end{equation}
with the initial data
\begin{equation}
q(x,0)=A(x)e^{\frac{i\mu}{\eps}S(x)},\ \ \ A(x), S(x)\in\mathbb{R},\ \ \ \mu\ge 0,  \label{IC_NLS}
\end{equation}
in the asymptotic  limit $\eps\to 0$.  Eq. \eqref{eq:1.1_NLS} is a well-known  integrable system \cite{ZS}, and a lot of work has been done on the above initial value problem (see below).  The focus of the present study  is on the parameter $\mu$ in the  the exponent of the initial data.
For the specific data
\begin{equation}
 A(x)=-\sech x, \ \ \ \  \  S'(x)=-\tanh x, \ \ \ \ \     \mu\ge 0,  \label{IC_NLS_special}
\end{equation}
studied earlier \cite{KMM_book_03, TVZzero}, the solution
undergoes a transition at $\mu=2$.  When $\mu< 2$,   the Lax
spectrum  contains discrete eigenvalues numbering
$O\left(\frac{1}{\varepsilon}\right)$, each eigenvalue giving rise to a
soliton in the solution, which thus consists of both a radiative
and a solitonic part.  When $\mu\ge 2$, the spectrum is  purely
continuous and the solution is purely radiative  (absence of
solitons). We prove that the local wave parameters   (branchpoints
of the Riemann surface that represents the asymptotic solution
locally in space-time), vary smoothly with $\mu$, even at the
critical value  $\mu=2$. Indeed,  numerical experiments have shown
absence of any noticeable transition in the behavior of the
branchpoints at the critical value \cite{MillerKamvissis}.
Theorem \ref{thm:perturb_NLS} establishes this fact
rigorously.

The reason $\mu$ deserves a special attention as a perturbation
parameter is twofold: 1. At the value of $\mu=2$ there is a phase
transition of the nature of the solution (there is a solitonic
part when $\mu<2$, see below). Perturbing $\mu$ across this value
allows continuing the validity of a rigorously derived asymptotics
\cite{TVZlong2} from the region $\mu\ge 2$ to the region $\mu<2$.
Ab initio derivation of such asymptotics in the region $\mu<2$
would be technically more demanding. 2. $\mu$ is a singularity of
the RH contour in a way that cannot be remedied by contour
deformations. Such a difficulty is absent when the perturbation
parameters space or time variables $x$ and $t$. Indeed the methods
of \cite{TV_det} and \cite{TV_param} are applied in this work and amongst the
surprises which allow the methods to apply is a collection of
explicit formulae for dependence in $\mu$ summarized in Lemmas
3.2-3.4.

Essential mathematical difficulties are encountered in the
solution of the  initial value problem \eqref{eq:1.1_NLS} and
\eqref{IC_NLS}  in general and \eqref{eq:1.1_NLS} and
\eqref{IC_NLS_special} in particular.
\begin{enumerate}
\item The  calculation
of the scattering data at $t=0$ is extremely delicate as seen from
the work of Klaus and Shaw \cite{KlausShaw}.
 \item The linearizing Zakharov-Shabat eigenvalue problem  \cite{ZS} is {\it not
 selfadjoint}. This is in contrast to the selfadjointness of the initial value  problem for the small-dispersion  Korteweg-de Vries (KdV) equation, in which a systematic steepest descent procedure was developed by Deift, Venakides and Zhou \cite{DVZ1} for calculating the asymptotic solution (see also \cite{DVZ_longtimeKdV}).   The approach in \cite{DVZ1}   extended   the original steepest descent analysis of Deift and Zhou \cite{DZ1} for oscillatory Riemann-Hilbert problems, by adding to it  the $g$-function mechanism. A  systematic procedure then  obtained the KdV solution, which  consists of  waves that are fully nonlinear. These waves are typically modulated. In other words, the oscillations are rapid, exhibiting   wavenumbers  and frequencies in the small spatiotemporal scale,   that vary in the large scale in accordance to {\it modulation equations}.

\item The system of modulation equations in the form of a set of PDE for the NLS equation exhibits complex characteristics (Forest and Lee \cite{FL}).  Posed naturally as an initial   value problem, the system is thus {\it ill-posed} and   modulated NLS  waves are unstable. The  instability to the  large-scale spatio-temporal variation of the wave parameters  (modulational instability) is the primary source of problems in  nonlinear fiberoptical transmission, which is  governed by the NLS equation.
\end{enumerate}
 In spite of the modulational instability, there exist  initial data with a particular combination of $A$ and $S$ that evolve into a profile of modulated waves. The ordered structure of modulated nonlinear waves was first observed numerically by Miller and Kamvissis  \cite{MillerKamvissis}, for the  initial data of \eqref{IC_NLS_special} with  $\mu=0$ and values of $\eps$ that allowed them to implement the multisoliton NLS formulae. Miller and Kamvissis   observed the phenomenon of wave  breaking (see below) and the formation of more complex wave structures  past the break in this work.
Later numerical findings by Ceniseros and Tian \cite{CeniserosTian}, as well as by  Cai, Mc Laughlin (D.W.), Mc Laughlin (K. T-R) \cite{Cai} also detected the ordered structures.

These studies were followed by analytic work of Kamvissis, McLaughlin and Miller \cite{KMM_book_03}  for the same initial data  ($\mu=0$), the corresponding initial scattering data having been  earlier calculated explicitly by Satsuma and Yajima \cite{SatsYaj}. This work  set forth a procedure aiming at the analytic determination of the observed phenomena  that would practically extend the steepest descent  procedure  cited above  to  the non-selfadjoint case. Following a similar approach,
Tovbis, Venakides, Zhou, derived these phenomena rigorously from the initial data \eqref{IC_NLS_special} with $\mu>0$ \cite{TVZzero}; the initial scattering data  were previously obtained by Tovbis and Venakides \cite{TV_ScatData}.  The asymptotic calculation of the wave solution was with error of order $\eps$ for points in space-time that are off the break point and off the caustic curves (see below). In further work   \cite{TVZlong2, TVZ2},  the same authors also derived  the long-time behavior of the asymptotic solution for $\mu\ge 2$  and generalized  the  prebreak analysis to a wide class of initial data.  The detailed asymptotic behavior in a neighborhood of the first break-point was derived recently by Bertola and Tovbis \cite{BertTov1}.

The rigorous derivation of the mechanism of the second break  remains  an open problem. Using a combination of theoretical and numerical arguments, Lyng and Miller  \cite{Lyng} obtained significant insights for  initial data \eqref{IC_NLS_special} with  $\mu=0$  when  the solution is an  $N$-soliton, where $N=O\left(\frac{1}{\varepsilon}\right)$. In particular, they identified  a mathematical mechanism for the second break, which depends essentially on the discrete nature of the spectrum of the N-soliton and turns out to differ from the mechanism of the first break.

The asymptotic solution for {\it shock initial data},
\begin{equation}
 A=\mbox{constant}, \ \ \ \ S'(x)=\mbox{sign}x,  \ \ \ \ \mu>0,
\end{equation}
was derived globally in time  by Buckingham and Venakides \cite{BuckVen}.

The work of the present paper rely  on the determinant form of the modulation equations of NLS obtained  by Tovbis and Venakides  \cite{TV_det}. The modulation equations are transcendental equations, not differential equations, thus the modulational instability does not hinder the analysis.  Tovbis and Venakides   utilized the determinant form to study the variation of the asymptotic procedure  as  parameters of the Riemann-Hilbert problem, in particular the spatial and the time variables that are parameters in the Riemann-Hilbert problem analysis, change. They  proved \cite{TV_param} that, in the case of a regular break, the nonlinear steepest descent asymptotics can be ``automatically'' continued through the breaking curve (however, the expressions for the asymptotic solution will be different on the different sides of the curve).  Although the results are stated and proven for the focusing NLS equation, they can be reformulated for AKNS systems, as well as for the nonlinear steepest descend method in a more general setting. The present paper examines the variation of the procedure with respect to the parameter
$\mu$ and proves that the variation is smooth even as $\mu$ crosses the critical
value $\mu=2$.

\subsection{Background: $n$-phase waves, inverse scattering and the Riemann-Hilbert Problem}
In order to make the study accessible beyond the group of experts in the subject, we give an overview of our understanding of  the phenomenology of the time evolution of the semiclassical NLS equation and the mathematics that represents this phenomenology.

In the ideal (and necessarily unstable) scenario, in which  modulated wave profiles persist in space-time, so does the  separation in two  space-time scales.
In the large scale, a set of boundaries (breaking or caustic curves) divides the space-time  half-plane $(x,t)$, $t>0$ into regions. Inside each region, and in the leading order as $\eps \to 0$,
the solution  is an $n$-phase wave ($n=0,1,2,3,...$), with waveperiods and wavelengths in the small scale. The  wave parameters vary in the large scale. The increase in $n$ occurs typically as a new phase is generated  at a point in space-time, due, for example, to wave-breaking (more precisely, to avert wave-breaking) or to two existing phases coming together. The newly generated oscillatory  phase spreads in space with finite speed and the trace of its fronts in space-time  constitutes the set of breaking curves.

An $n$-phase NLS wave  is a
solution of $(\ref{eq:1.1_NLS})$ which exhibits a ``carrier" plane wave and $n$ nonlinearly  interacting wave-phases that control its oscillating amplitude.  The wave is characterized by a set of $2n+2$ real wave parameters:
 $n+1$ frequencies and $n+1$ wavenumbers. In the scenario discussed above, waves with  periods and wavelengths of order $O(\eps)$  constitute  the  the small space-time scale. The boundaries separating phases in space-time exist in the large scale, which
is of order $O(1)$. These boundaries play the role of nonlinear caustic curves.  The analytic wave profile  of an $n$-phase wave   is given explicitly  in terms of an elliptic ($n=1$) or hyperelliptic ($n>1$) Riemann theta function, derived from a compact Riemann surface of genus $n$. This is true not only for NLS but for most of the integrable wave equations studied. The $2n+2$ branchpoints of the Riemann surface
are the wave parameters of choice, that determine the $n+1$ frequencies and $n+1$ wavenumbers.  In the case of NLS, the $0$-phase wave is simply a plane wave.

The initial data \eqref{IC_NLS} have the structure of a
modulated $0$-phase wave. As $t$, increasing
from zero, reaches a value $t=t_{break}$, the $n=0$ initial phase
breaks at a caustic point in space-time. As described above, a wave-phase of higher  $n$ emerges then and spreads in space. As time increases, the endpoints of the spatial interval of existence of the new phase define the two caustic curves in space-time that
emanate from the break-point. The eventual breaking of this new phase, the so-called {\bf second break}.  The mechanism of the second break is fundamentally different from the one of the first.

As stated above, the {\bf  analytic description of $n$-phase waves} \cite{Belo} is in terms  of an $n$-phase Riemann theta function. This  is
an  $n$-fold Fourier series obtained by  summing
exp\{$2\pi i{\bf z}\cdot{\bf m}+\pi i(B{\bf m},{\bf m})\} $
over the multi-integer ${\bf m}$.  $B$ is an $n\times n$ matrix with
positive definite imaginary part that gives the series
exponential quadratic convergence.
In the case of NLS waves, the
 matrix $B$ arises from periods of the Riemann surface of the radical
\begin{equation} \label{radical}
R(z)=\left(\prod_{i=0}^{2n+1}(z-\alpha_i)\right)^\frac{1}{2}, \ \ \ \ \ \mbox{where} \ \ \  \a_{2j+1}=\overline\a_{2j};
\end{equation}
the elements of $B$ are  linear combinations of
the hyperelliptic  integrals $\oint z^k/R(z), \ k=0,1\cdots n-1$ along appropriate
closed contours on the Riemann surface of $R$ \cite{Belo}.
The series has
a natural quasi-periodic structure in the  $n$ complex arguments
${\bf z}=(z_1,\cdots z_n)$. Each ${z_j}$ is linear in $x$
and $t$  and represents a nonlinear phase of the wave; the wavenumbers and
 frequencies are expressed in terms
of hyperelliptic integrals of the radical $R$ and are thus
functions of the $\a_{2j}$, whose status as preferred parameters
is obvious from \eqref{radical}.

The semiclassical limit procedure  derives the emergent wave structure described above {\bf without} any a priori ansatz of such structure. The radical $R(z)$ and the  wave parameters arise naturally in  the procedure. As mentioned above,  these wave structures are modulated in space-time. In the large space-time scale, the branchpoints $\alpha_i$  vary  and their number experiences a jump across the breaking curves.  The  branchpoints are calculated from the modulation equations in determinant form (they are transcendental  equations, not partial differential equation, thus there is no ill-posedness at hand). The number of branchpoints is obtained with the additional help of sign conditions that are obtained in the procedure.

\noindent{\bf Overview of scattering, inverse scattering and the Riemann-Hilbert problem (RHP):}
The NLS was solved by Zakharov and Shabat \cite{ZS}, who discovered a Lax pair that linearizes NLS. The Lax pair refers to two ordinary differential operators, one in the spatial variable $x$ and the second in the time variable $t$.

The first operator of the Lax pair is a Dirac-type operator that is not selfadjoint. The corresponding eigenvalue problem (ZS)
  is a $2\times 2$ first order
{\bf linear} ODE, with  independent variable $x$. The NLS solution $q(x,t,\eps)$ plays the role  of a scatterer,  entering in the off-diagonal entries of the ODE matrix.  {\bf Scattering data}
are defined for those values of the spectral parameter $z$ that
produce bounded (ZS) solutions. This happens when $z$ is real
(these solutions are called  scattering states) and at the discrete
set of proper  eigenvalues $z_j$ (the eigenvalues come  in
complex-conjugate pairs; the  normalized  $L^2$ solutions are called   bound states).  The {\bf reflection coefficient} $r(z)$, $z\in\R$ provides a connection between   the asymptotic behaviors of the scattering states  as $x\to\pm\infty$. The {\bf norming
constants} $c_j$,  corresponding
to the  proper eigenvalues $z_j$, provide the asymptotic behavior of the bound states as $ x\to+\infty$.

The second operator of the Lax pair evolves the scattering states and the bound states in time and is again a $2\times 2$ linear ODE system. The holding of the NLS equation guarantees that this    evolution  involves the action of a time dependent unitary operator. As a result, the  spectrum of the first Lax operator remains constant in time and the scattering data evolve in time through multiplication by simple explicit exponential propagators. The continuous spectrum contributes  {\bf radiation}  to the solution of NLS. The bound states contribute  {\bf solitons}.

 Zakharov and Shabat \cite{ZS} developed the inverse scattering procedure
for deriving  $q(x,t)$ at any $(x,t)$ given the scattering data at $t=0$. In the  modern approach initiated by Shabat \cite{Shabat}, the procedure is recast into a {\bf matrix Riemann-Hilbert problem} (RHP) for a $2\times2$ matrix on the complex plane of the
spectral variable $z$. One needs to determine
the  matrix $m(z)$ that is analytic on the closed complex plane, off an  oriented contour $\Sigma$, that consists of the real axis and
of small circles surrounding the eigenvalues.
Modulo  multiplication of its columns by normalizing factors
$e^{\pm ixz/\eps}$, the  matrix $m(z)$ (the unknown of the problem at $t>0$)
is a judiciously specified fundamental matrix
 solution of the eigenvalue problem of the first operator of the Lax pair (Zakharov-Shabat eigenvalue problem). In order to determine the matrix $m(z)$, one is given a jump condition  on the contour  $\Sigma$, and  a normalization condition at $z\to\infty$,
\begin{equation}
  \label{m}
m(z)=
\left( \begin{array}{cc} m_{11}&m_{12}\\
m_{21}&m_{22}\end{array}\right)
\to \mbox{ Identity, as} \ z\to \infty; \ \ \ \ \ \ \ m_+(z)=m_-(z)V \ \mbox{when} \ z\in\Sigma.
\end{equation}
The subscripts $\pm$ indicate limits taken from the left/right of the contour. The $2\times2$ matrix $V=V(z,x,t,\eps)$, defined on the jump contour and referred to as the {\bf jump matrix}, is  nonsingular and encodes the scattering data (see below). The space-time variables $x,t$ (and the semiclassical parameter $\varepsilon$) are parameters in the problem.

The  solution  to \eqref{eq:1.1_NLS} is given by the simple formula
\begin{equation}
 q(x,t,\eps)=\lim_{z\to\infty}zm_{12}(z,x,t,\eps).
\end{equation}
The results and the calculations of this study are in the asymptotic limit of the semiclassical parameter   $\eps\to 0$.

\subsection{Background: The semiclassical limit $\eps\to 0$}

The Riemann-Hilbert approach is a major tool in the asymptotic analysis of integrable systems as
established with the discovery of the steepest descent method \cite{DZ1,DZ2} and its extension through the $g$-function mechanism \cite{DVZ1,TVZzero}.  The asymptotic
methods via the RHP approach also apply to orthogonal polynomial asymptotics
\cite{Deift_book, DKMVZ_strong}, and to random matrices
\cite{BDJ, DKMVZ_uniform, DJ, EM}.

 The semiclassical asymptotic analysis of the  highly oscillatory RHP  is similar in spirit to the steepest descent method for integrals in the complex plane. Throughout the analysis, the quantities $x$, $t$, and $\mu$ enter as parameters. The semiclassical analysis, performed with the aid of the $g$-function mechanism  \cite{DVZ1, TVZzero}, is constructive. An undetermined function $g(z)$ is introduced in the RHP through a simple transformation of the independent matrix variable of the RHP.   The contour of the RHP, itself an unknown,  is partitioned (in a way to be determined) into two types of interlacing subarcs. The jump-matrix is manipulated differently in the two subarc types.  In one of them  (main arcs) the jump matrix is factored in a certain way. In the other type (complementary arcs), it is factored differently or is left as is.  The $g$-function mechanism then   imposes  on appropriate entries of the jump matrix factors  the condition of  constancy in the complex spectral variable combined with boundedness  as $\eps\to 0$, while imposing decay as $\eps\to 0$ on other entries.  The constancy conditions are equalities, the decay conditions are sign conditions that act on exponents, forcing the decay of the corresponding exponential entries. Put together these conditions constitute a scalar RHP for the function $g(z)$ (or as below on its sister function $h(z)$). The contour of the RHP,  its partitioning and finally the functions $g(z)$ and $h(z)$ follow from the analysis of this scalar RHP.  This procedure allows the peeling-off of the leading order solution of the original matrix RHP and leaves
behind a matrix RHP for the error. This RHP is solvable with the aid of a Neumann series.

The  formulae for the conditions obtained through the $g$-function mechanism have an intuitive interpretation that arises from
(2D) potential theory in the complex plane of the spectral
parameter $z$. The main question is to determine the equilibrium
measure for an energy
functional \cite{Deift_book,KR_05, Simon_Szego_thm_book} that depends parametrically  on the variables $x$ and $t$. The  support of the measure depends on $x$ and $t$. For problems with self-adjoint Lax operators
({\it e.g.} Korteweg-de Vries equation in the small dispersion limit), the  support is on the real line (typically a set of intervals as in \cite{LaxLev1,LaxLev2,LaxLev3, Venak_reflec}). In a general non-selfadjoint case the supports are in the complex plane.  This is the case with the (focusing) NLS under study, whose spatial  Lax operator, the   Zakharov-Shabat system,  is of Dirac-type and is non-selfadjoint.

The conditions obtained through the $g$-function mechanism are exactly the variational conditions for the equilibrium measure.
Deriving these conditions rigorously as such is highly taxing, especially in the nonselfadjoint case. This is not needed though.
The conditions are used essentially as an Ansatz in the RHP. As long as the calculation confirms the Ansatz, the whole procedure is rigorous.

In the cases of the (focusing) NLS  in the semiclassical limit that have been worked out so far \cite{BKS_step, BuckVen, Lyng, TVZzero, TVZ2},  the support of the equilibrium measure  is a finite union of arcs in the complex plane with complex-conjugate symmetry.
Denote the end points of the ``main arcs''  (see below) as $\{\alpha_j\}_{j=0}^{N'}$ with some finite $N'\in\mathbb{N}$. The analysis leads naturally to the representation of  these points as the roots of a monic polynomial, the square root of which is exactly the radical in \eqref{radical}. This radical, a finite genus Riemann surface,  constitutes the passage to the periodic structure of the  local waveform. We refer to these endpoints henceforth as ``branchpoints''.
It is necessary  to establish the existence and the number of the branchpoints for each pair $(x,t)$
as well as the existence of the arcs,  which provide the leading contribution to the expression of the local waveform.

The approach to obtaining the asymptotic solution from the initial data  is to analyze the RHP for fixed $x$ and $t$, thus treating the space and time variables as parameters. The smooth dependence of the branchpoints $\alpha_j$
on the parameters $x$ and $t$ for semiclassical NLS with $q(x,0)=\sech(x)$, $\mu=0$ was studied in \cite{KMM_book_03} by considering
moment conditions (pp.148-162). A different approach is to start with local behavior near $\alpha_j$'s was applied in \cite{TV_det}
leading to formulae of the form
\begin{equation}
\frac{\partial\alpha_j}{\partial x}(x,t)=-\frac{2\pi i\ \frac{\partial K}{\partial x}(\alpha_j,\va,x,t)}{D(\va) \oint_{\ggh}\frac{f'(\zeta,x,t)}{(\zeta-\alpha_j(x,t))R(\zeta,\va)}d\zeta},
\label{eq:D_alpha_D_x_intro}
\end{equation}
\begin{equation}
\frac{\partial\alpha_j}{\partial t}(x,t)=-\frac{2\pi i\ \frac{\partial K}{\partial t}(\alpha_j,\va,x,t)}{D(\va) \oint_{\ggh}\frac{f'(\zeta,x,t)}{(\zeta-\alpha_j(x,t))R(\zeta,\va)}d\zeta},
\label{eq:D_alpha_D_t_intro}
\end{equation}
where $\va=\va(x,t)$.

In addition to \cite{TV_det}-\cite{TV_param} this work is similar in spirit to
\cite{KM_00} which pertains to random matrix theory. An analogous
quantity to the function $f$ is a potential.
Those authors put a strong topology on the set of allowable
potentials (a potential is analogous to the function $f$ in the
present work) and demonstrate that the so-called "regular case" is
generic - i.e. if you find a potential with fixed genus and all
other side-conditions are satisfied in a strict sense, then the
same genus holds for all potentials in a neighborhood.

In this study, we obtain the following main results.
\begin{enumerate}
 \item
We extend formulae
(\ref{eq:D_alpha_D_x_intro}-\ref{eq:D_alpha_D_t_intro}) to include  the dependence  the branchpoints and  of the contour $\ggh=\ggh(\mu)$ on the external parameter
$\mu$,
\begin{equation}
\frac{\partial\alpha_j}{\partial \mu}(x,t,\mu)=-\frac{2\pi i\ \frac{\partial K}{\partial \mu}(\alpha_j,\va,x,t,\mu)}{D(\va) \oint_{\ggh(\mu)}\frac{f'(\zeta,x,t,\mu)}{(\zeta-\alpha_j(x,t,\mu))R(\zeta,\va)}d\zeta},
\label{eq:D_alpha_D_mu_intro}
\end{equation}
where $\va=\va(x,t,\mu)$. We show that the dependence is smooth, meaning that the contour,
the jump matrix, and the solution of the scalar RHP evolve
smoothly (they are continuously differentiable) in
$\mu$.
\item We simplify
the expression for $\frac{\partial K}{\partial \mu}$ as (\ref{eq:DK/Dmu_function_NLS}).
\item
We show good agreement of  formula (\ref{eq:D_alpha_D_mu_intro}) with the dependence of the  branchpoints on the parameter $\mu$, obtained by the direct numerical solution of the system (\ref{eq:mod_K_def}) (see Fig. \ref{fig:evolutions}).
\item
We prove the preservation of genus of the asymptotic solution in an open interval of parameter
$\mu$. In particular, the genus is preserved ($0$ or $2$)
for all $x$ and $t>0$ for some open interval (which depends on $x$ and $t$) for $\mu<2$.
\end{enumerate}

\vskip.3cm

The paper is organized as the following: {\it Section 2} -
Definitions and prior results; {\it Section 3.1} -  Analyticity of
$f$ in $\mu$, and differentiability of the branchpoints
$\alpha_j=\alpha_j(\mu)$; {\it Section 3.2} - $\mu$ dependence of
quantities that appear in Theorem \ref{thm:perturb_NLS}; {\it
Section 3.3} -  Sign conditions and preservation of genus theorem
\ref{thm:pres_genus_NLS}; {\it Section 3.3} - Numerics; {\it
Section 4} - Appendix.

\section{Preliminaries}

\begin{figure}
\begin{center}
\includegraphics[height=5cm]{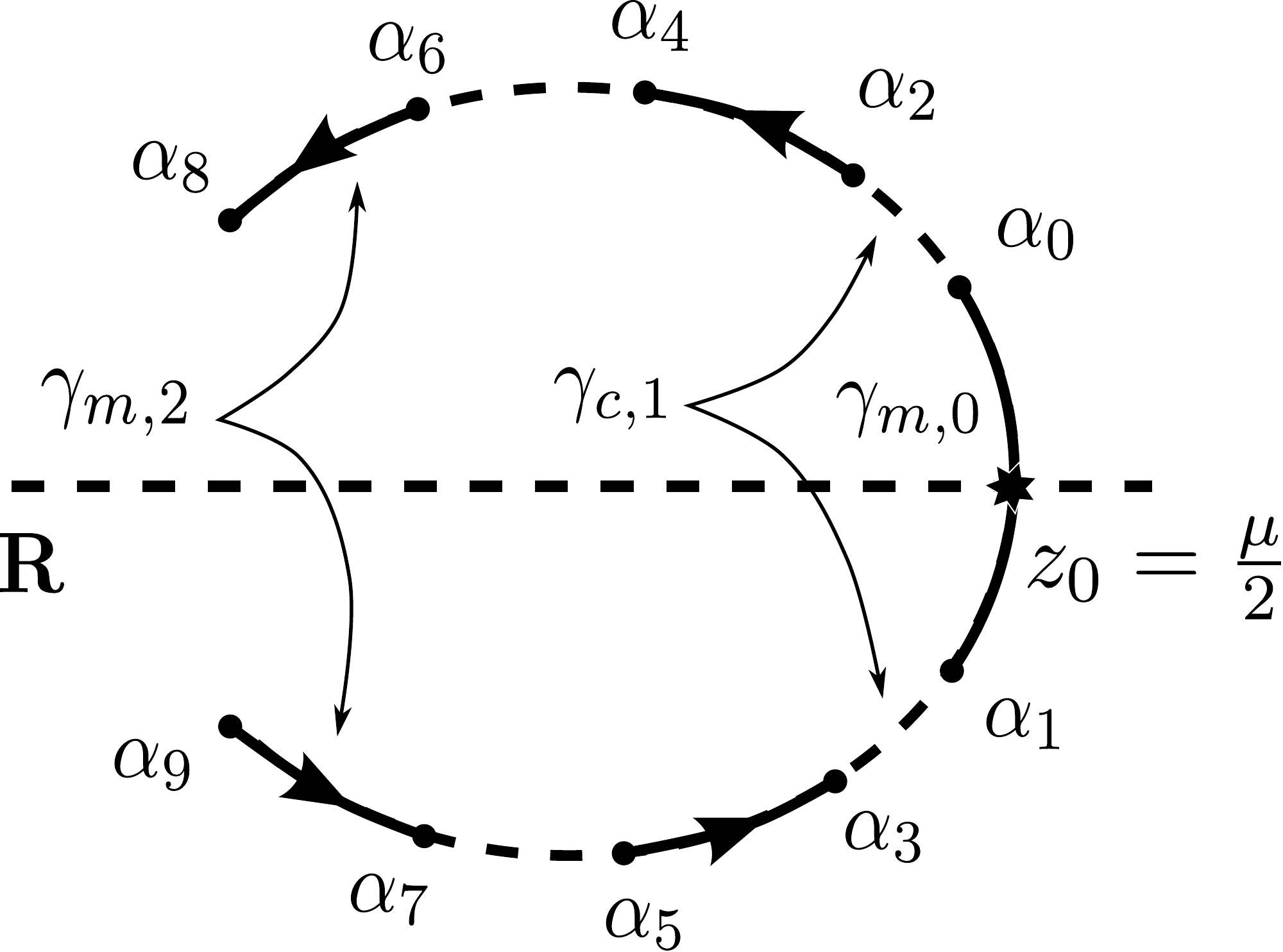}
\caption{\label{fig:RHP_contours} The RHP jump contour in the case of genus 4 with complex-conjugate symmetry in the notation of \cite{TVZzero}.}
\end{center}
\end{figure}

\begin{figure}
\begin{center}
\subfigure[Contour $\ggh$]{\includegraphics[height=6cm]{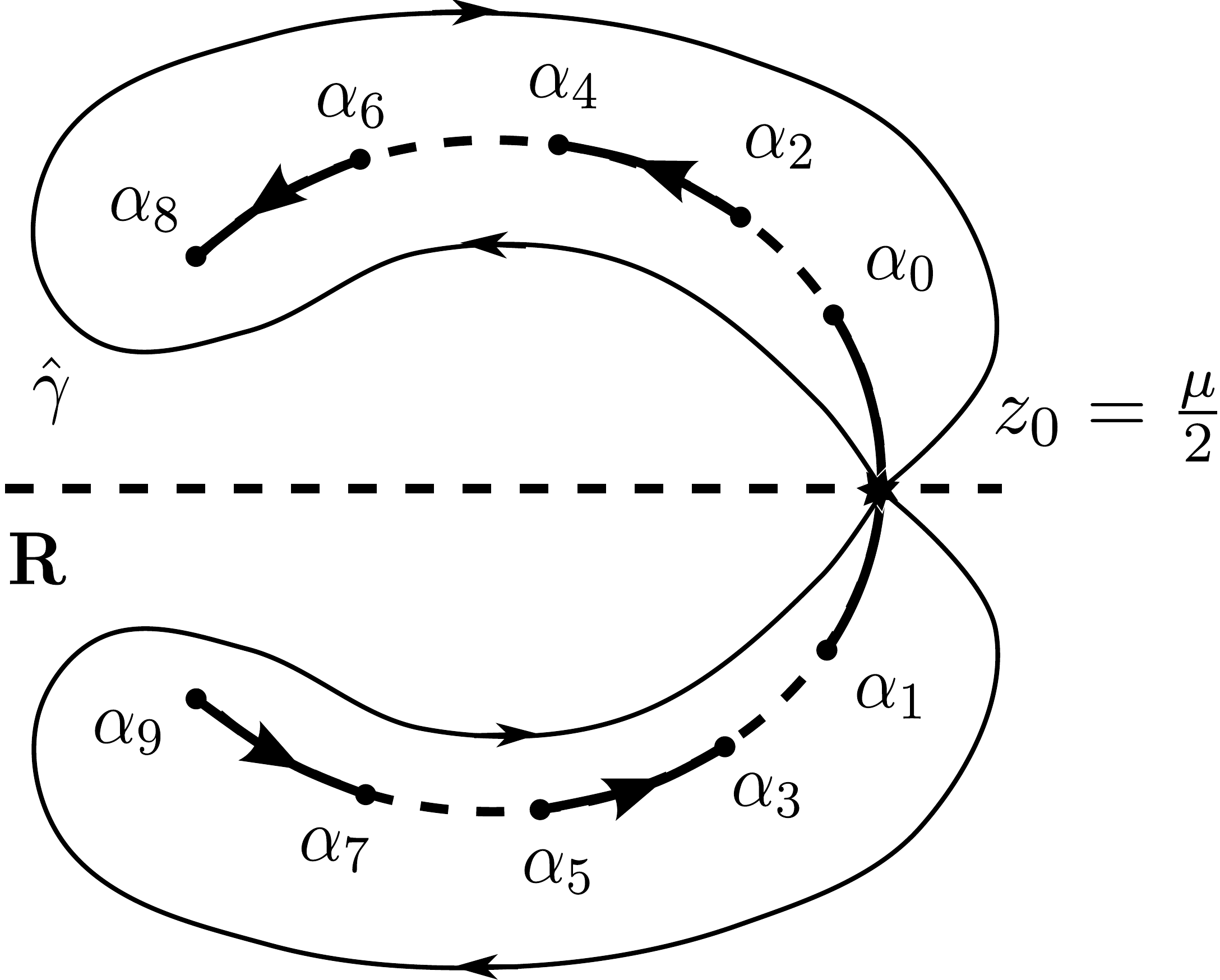}}
\subfigure[Contours $\ggh_{m,j}$ ]{\includegraphics[height=6cm]{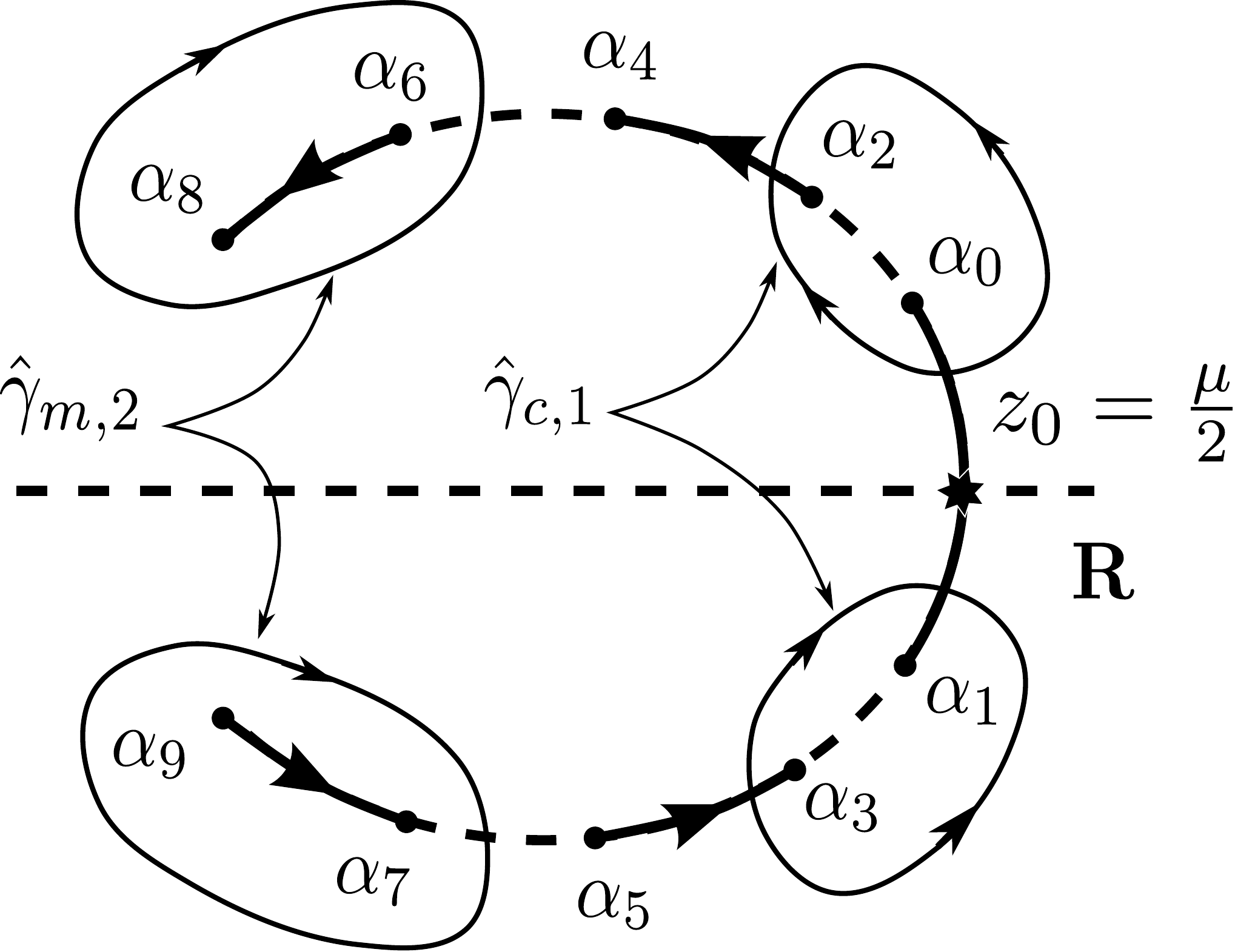}}
\caption{\label{fig:loop_contours} Contours of integration for function $h(z)$ (\ref{eq:h(z)_def_loops}). Point
$z_0=\mm$ is a point of non-analyticity of $f(z)$ on $\gamma$.}
\end{center}
\end{figure}

We consider a model scalar Riemann-Hilbert problem (RHP),  which arises  in the process of the asymptotic solution of the semiclassical focusing NLS (\ref{eq:1.1_NLS}) with the initial condition  (\ref{IC_NLS_special}).  The input to the problem is a given  function $f(z)$,
that derives originally  from the asymptotic limit of the scattering data for this initial value problem.  The function $f(z)$ (see \eqref{f}) depends parametrically on the space and time variables, $x$ and $t$. It also depends on  the real parameter $\mu$ in the initial data of NLS (\ref{IC_NLS_special}). The  following properties
of the function $f(z)$  are crucial for our calculations.
\begin{enumerate}
\item $f(z)$ is analytic at all points of $\mathbb{C}/\mathbb{R}$ except for branchcuts.
\item $f(z)$  has a $z\ln z$ singularity at the point $z=\mu/2$.
\item $f(z)$ is Schwarz-symmetric.

\end{enumerate}
Other functions  that satisfy these conditions  are a priori  admissible as inputs to our model problem; whether they too lead to solutions is a matter of calculation.

The unknown of the problem is a  function $h(z)$ that has the following properties.
\begin{enumerate}
\item The function $h(z)$ is  Schwarz-reflexive.
 \item The function $h(z)$   compensates for the points of nonanalyticity  of $f(z)$ in the sense that $h+f$ is analytic at    these points with only one exception, the point $z=\mu/2$.
 \item The function  $h+f$  is analytic in
$\mathbb{\overline C}\backslash\gamma$, where $\gamma$ is a contour
to be determines, that passes through point $z=\mu/2$, and is symmetric with respect to the real axis.
\item The function $h(z)$ exhibits  constant (independent of $z$) jumps across subarcs of the contour $\gamma$.  This is made more specific below.
\end{enumerate}
{\it Remark:} One  realizes that a cleaner formulation of the RHP could be achieved for the  unknown $f+h$ instead of $h$. Indeed, the function
\[ g(z)=\frac{1}{2}\Biggl(h(z)+f(z)\Biggr),    \]
jumps only across  contour $\gamma$, providing  the cleaner RHP formulation. Yet, the results are in terms of the function $h(z)$, hence our choice in its favor.

\noindent{\it Remark:} Based on the previous remark, we still refer to the contour $\gamma$ as the contour of the RHP. Since the contour itself is one of the  unknowns, we refer to the problem as a ``free contour RHP'', in analogy to the well-known ``free boundary value problems''.

 We now set the precise conditions for the contour   $\gamma$ and the jumps across it.
\begin{enumerate}
 \item The contour  $\gamma$  is a finite length, non-selfintersecting  arc that is  symmetric with respect to the real axis. It intersects the real axis at a point $z=\mu/2$ (we refer to this point as $z_0$) to be discussed below.
\item The contour $\gamma$ is oriented from its endpoint in the lower complex  half-plane  to its endpoint in the
upper half-plane.
\item For some integer $N$, we consider $2N+1$ distinct  points of the contour  in the upper half plane, including the contour endpoint; we also consider  their complex-conjugates   in the lower half plane. We label the points in the upper half-plane with even indices  $\{\alpha_{2i}\}_{i=0}^{2N}$ that increase in the direction of orientation of the contour and we label the points in the lower half-plane  with odd indices   $\{\alpha_{2i+1}\}_{i=0}^{2N}$ that decrease in the direction of orientation. Clearly,  the sequence of points in the the direction of orientation are
\[\underbrace{\alpha_{4N+1}, \ \alpha_{4N-1},  \  \alpha_{4N-3}, \cdots, \alpha_{3}, \  \alpha_{1}}_{\mbox{lower half-plane}}, \ \  \underbrace{\alpha_{0}, \  \alpha_{2},\cdots,  \alpha_{4N-4},  \ \alpha_{4N-2}, \  \alpha_{4N}}_{\mbox{upper half-plane}}, \]
$\alpha_{4N+1}$ and $\alpha_{4N}$ are the endpoints of the contour   $\gamma$, and
\begin{equation}
 \alpha_{2i+1}=\overline{\alpha_{2i}}, \ \ i=0, 1, 2, \cdots, 2N.
\end{equation}
\end{enumerate}

The jumps of the RHP are defined  on the arcs into which the contour is partitioned by these points.  Two alternative types of RHP jumps are imposed; each arc is labeled as a main arc or a complementary arc, respectively. The two arc types  interlace along the contour and the contour  has a main arcs at both ends.  All arcs inherit  their orientation from the contour $\gamma$.

It is trivial to check that  arcs which are complex-conjugate to each other are either both main or both complementary. It is convenient to lump such an arc pair  into one entity; in the following definitions,  we retain the terms  {\it main arc} and  {\it complementary arc} for such  arc pairs, by abuse of vocabulary.

\begin{enumerate}
 \item
We define as main arcs $\gg_{m,j}$, where $j=0, 1,...,N$,
\[
\gg_{m,0}=\left[\alpha_{1},\alpha_{0}\right],\quad
\gg_{m,j}=\left[\alpha_{4j-2},\alpha_{4j}\right]\cup\left[\alpha_{4j+1},\alpha_{4j-1}\right],\quad j=1,...,N,
\]
Thus, a main arc consists of a single arc when $j=0$ and of the union of two arcs, when $j>0$.
\item
We define as complementary arcs $\gg_{c,j}$, where $j= 1,...,N$,
\[
\gg_{c,j}=\left[\alpha_{4j-4},\alpha_{4j-2}\right]\cup\left[\alpha_{4j-1},\alpha_{4j-3}\right],\quad  j=1,...,N.
\]
\end{enumerate}

The jump conditions of the RHP are given by,
\begin{equation}
\left\{
\begin{array}{l}
h_{+}(z)+h_{-}(z)=2W_j,\ \mbox{on} \ \gg_{m,j}, \ \ j=0,1,...,N,\\
h_{+}(z)-h_{-}(z)=2\Omega_j,\ \mbox{on} \ \gg_{c,j}, \ \ j=1,...,N,\\
h(z)+f(z)\ \mbox{is analytic in}\ \mathbb{\overline{C}}\backslash\gg,
\end{array}
\right. \label{eq:RHP_h_NLS_def}
\end{equation}
where $W_j$ and $\Omega_j$ are real constants with a normalization $W_0=0$.

We end our formulation of the free contour RHP for the function $h(z)$ by reiterating what the knowns and what the unknowns of the problem are.
The contour $\gamma$ is unknown, except for the requirement of passing through the singular point $z_0=\mu/2$. The positions  of the $4N+2$ partitioning points  are unknown. As we have formulated the problem so far, the value of the integer $N$ is free.  This freedom is lifted if important  additional conditions (they are ``sign conditions'') are imposed on the RHP,  as occurs in the case of NLS \cite{TVZzero}. The sign conditions guarantee the decay of  certain jump matrix entries. In the presence of these conditions, the  number of points turns into  an important unknown, the $n$-phase wave represented has $n=2N$ .  Finally, the real constants in the jump conditions are unknown. To summarize, the only known data is the function $f(z;\mu,\beta)$.

Our main concern is the  dependence of the  solution of the problem in the parameter $\mu$. Any other parameters (space and time if the RHP arises from focusing NLS) are collectively labeled $\beta$.  A multi-parameter family of such functions $f$ will be discussed below. The specific  function $f$ that corresponds to the focusing NLS equation with initial data \eqref{IC_NLS_special} is given in the beginning of subsection $\ref{Setup}$.

\begin{definition}$ $\\
Let
\begin{equation}
 \va
=\{\a_0, \ \a_1, \ \a_2, \cdots,  \a_{4N+1} \}
\end{equation}
We say
\[
\gamma\in\Gamma(\va,\mu)
\]
if a  contour $\gamma$ satisfies all the conditions set above  and shown in Fig. \ref{fig:RHP_contours}.
\label{def:gamma}
\end{definition}

Note that for fixed $\va$ and $\mu$ the contour $\gamma$ aside
from passing through $z=\alpha_j$ and $z=\mm$ is free to deform
continuously within of domain of analyticity of $f$. Thus for a
fixed $f$, the notation  $\gamma=\gamma(\va,\mu)$ may employed to
indicate the general element of the set $\Gamma(\va,\mu)$. The
following lemma is an immediate consequence of our definitions.

\begin{lemma}
Let $\gamma_0=\gamma_0(\va_0,\mu_0)\in\Gamma(\va_0,\mu_0)$.

Then there exist open neighborhoods of $\va_0$ and $\mu_0$
such that for all $\va$ in the neighborhood of $\va_0$, for all $\mu$ in the neighborhood of $\mu_0$
there is a contour $\gamma(\va,\mu)\in\Gamma(\va,\mu).$
\label{lem:gamma_in_Gamma_near}
\end{lemma}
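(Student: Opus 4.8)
\noindent\emph{Proof strategy.} The plan is to reduce the statement to the construction of a single Schwarz-symmetric ambient isotopy of the plane and then to push the reference contour $\gamma_0$ forward through it. Since $\gamma_0\in\Gamma(\va_0,\mu_0)$, it is a compact simple arc passing through the marked points $\alpha_0,\dots,\alpha_{4N+1}$ of $\va_0$ and through the crossing point $z_0=\mu_0/2\in\R$, lying off the real axis except at $z_0$ and hence inside the open domain of analyticity of $f(\cdot\,;\mu_0,\beta)$ away from $z_0$. The conjugate symmetry $\alpha_{2i+1}=\overline{\alpha_{2i}}$ forces the $\alpha_j$ to be distinct and off $\R$, so there is a $\delta_0>0$ bounding below both the distance between distinct marked points and the distance from $\{\alpha_j\}$ to $\R$. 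Throughout I regard $\va$ as ranging over conjugate-symmetric configurations (those respecting $\alpha_{2i+1}=\overline{\alpha_{2i}}$), which is the only regime in which $\Gamma(\va,\mu)$ can be nonempty; equivalently $\va$ is parametrized by the upper-half-plane points $\alpha_0,\alpha_2,\dots,\alpha_{4N}$.

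The key object is a map $\Phi=\Phi_{\va,\mu}\colon\C\to\C$, supported in a fixed compact neighborhood of $\gamma_0$, depending continuously on $(\va,\mu)$, equal to the identity at $(\va_0,\mu_0)$, and satisfying $\Phi(\alpha_j(\va_0))=\alpha_j(\va)$ and $\Phi(z_0(\mu_0))=z_0(\mu)=\mu/2$, together with the equivariance $\Phi(\overline{z})=\overline{\Phi(z)}$. I would build $\Phi$ from disjoint coordinate disks around the marked points and a smooth partition of unity: in the disk around an upper point $\alpha_{2i}$ (taken disjoint from $\R$) use the translation by $\alpha_{2i}(\va)-\alpha_{2i}(\va_0)$ damped to the identity near the disk boundary, and define the map on the conjugate disk by reflection; in the disk around $z_0$ (centered on $\R$ and chosen symmetric) use the horizontal translation by $\tfrac12(\mu-\mu_0)$, likewise damped. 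Because a real translation commutes with conjugation and a radially symmetric damping is conjugation-invariant, the $z_0$-block automatically preserves $\R$ and is Schwarz-symmetric, so no delicate gluing along the real axis is needed. As $(\va,\mu)\to(\va_0,\mu_0)$ the prescribed displacements tend to $0$, so $\Phi$ is uniformly $C^1$-close to the identity, and in particular a Lipschitz homeomorphism for $(\va,\mu)$ in a small enough neighborhood.

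I would then set $\gamma(\va,\mu):=\Phi(\gamma_0)$ and verify the defining conditions of $\Gamma(\va,\mu)$ in turn. As the homeomorphic image of a compact simple arc it is again a compact simple arc, and $C^1$-closeness to the identity keeps it non-self-intersecting and of finite length; the equivariance transfers the real-axis symmetry of $\gamma_0$, and the arc meets $\R$ only at $\Phi(z_0(\mu_0))=\mu/2$. By construction it passes through each $\alpha_j(\va)$, and since a homeomorphism preserves the order of points along an arc, the labeling, the endpoints $\alpha_{4N},\alpha_{4N+1}$, and the induced orientation are all inherited from $\gamma_0$. Finally, since $\gamma_0$ stays at distance $\ge\delta_0$ from $\R\setminus\{z_0\}$ while $\Phi$ displaces points by less than $\delta_0/2$, the image stays off $\R$ except at its crossing point, hence inside the domain of analyticity of $f(\cdot\,;\mu,\beta)$, after shrinking the neighborhood so that the $\mu$-dependent singular point and branch data of $f$ have not migrated far enough to touch $\gamma(\va,\mu)$.

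The step I expect to require the most care, and the only one where the analytic (rather than purely topological) nature of the problem enters, is the treatment of the crossing point $z_0=\mu/2$: it is simultaneously where the contour is pinned to $\R$, where $f$ carries its $z\ln z$ singularity, and a point whose prescribed motion along $\R$ must remain compatible with Schwarz symmetry and with the $\mu$-dependence of the domain of analyticity of $f$. The horizontal-translation model above handles the symmetry and the pinning cleanly, so the genuine content is the last verification: confirming that the branch cuts of $f$, which move continuously with $\mu$, do not cross the deformed contour. Shrinking the neighborhood of $(\va_0,\mu_0)$ controls this, but it is here that one must invoke the continuity of the data of $f$ in $\mu$ rather than pure point-set topology.
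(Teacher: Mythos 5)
Your proposal is correct, but it is worth noting that the paper does not actually prove this lemma: it is stated as ``an immediate consequence of our definitions,'' resting on the preceding observation that, apart from being pinned at the $\alpha_j$ and at $z=\mu/2$, the contour is free to deform continuously within the domain of analyticity of $f$. What you have written is a careful formalization of exactly that intuition: you realize the deformation as a Schwarz-equivariant ambient isotopy $\Phi_{\va,\mu}$ built from damped translations in disjoint disks about the marked points and about $z_0$, and push $\gamma_0$ forward. This buys genuine rigor at the points the paper glosses over --- equivariance gives $\Phi^{-1}(\R)=\R$, so the image meets the real axis only at the new crossing point $\mu/2$; injectivity of $\Phi$ preserves simplicity and the ordering of the marked points along the arc; and Lipschitz control preserves finite length. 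Your closing caveat is also the right one: the only place analysis (rather than point-set topology) enters is in checking that the $\mu$-dependent branch cuts of $f$ (along $\R$ and, for $\mu<2$, along the imaginary segment between $\pm T(\mu)$) do not migrate onto the deformed contour, which is handled by continuity of $T(\mu)$ and of the cut emanating from $z_0=\mu/2$ after shrinking the neighborhood. The one convention you should make explicit, as you do, is that $\va$ ranges only over conjugate-symmetric configurations with distinct points off $\R$; otherwise $\Gamma(\va,\mu)$ is empty and the statement is vacuously problematic. With that reading, your argument is complete and supplies the proof the paper omits.
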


\begin{definition}$ $\\
We say
\[
\ggh\in\hat\Gamma(\gamma,\va,\mu)
\]
if $\ggh$ is a non-selfintersecting closed contour around
$\gamma\in\Gamma(\va,\mu)$ within the domain of analyticity of $f$
except at $z_0=\mm$, with complex-conjugate symmetry
$\overline{\ggh}=\ggh$.

\label{def:gamma_hat}
\end{definition}

$\ggh_{m,j}$ and $\ggh_{c,j}$ are defined similarly.

\begin{remark}
By considering the loop contours $\ggh$, $\ggh_{m,j}$, $\ggh_{c,j}$,
the explicit dependence of the contours on the end points $\va$ is removed
(for example in (\ref{eq:Lemma3.2_int1}-\ref{eq:Lemma3.2_int5-6})). So even though
$\gamma=\gamma(\va,\mu)$, in all our evaluations below $\ggh=\ggh(\mu)$.
\end{remark}

\begin{remark}
Lemma \ref{lem:gamma_in_Gamma_near} implies that if $\ggh_0\in\hat\Gamma(\gamma_0,\va_0,\mu_0)$ then
there is a contour $\gamma\in\Gamma(\va,\mu)$ such that
$\ggh\in\hat\Gamma(\gamma,\va,\mu)$ for all $\va$ and $\mu$ in some open neighborhoods of $\va_0$ and $\mu_0$.
\end{remark}

\begin{definition}$ $\\
We denote the RHP (\ref{eq:RHP_h_NLS_def}) as
\[
RHP(\gamma,\va,\mu,f),
\]
where $\gg\in\Gamma(\va,\mu)$.
\label{def:RHP}
\end{definition}

The solution of the RHP (\ref{eq:RHP_h_NLS_def}), $h(z)$ can be found explicitly \cite{TVZzero}
\begin{equation}
h(z)=\frac{R(z)}{2\pi i}
\left[\oint_{\ggh}\frac{f(\zeta)}{(\zeta-z)R(\zeta)}d\zeta +
\sum_{j=1}^{N}\oint_{\ggh_{m,j}}\frac{W_j}{(\zeta-z)R(\zeta)}d\zeta+
\sum_{j=1}^{N}\oint_{\ggh_{c,j}}\frac{\Omega_j}{(\zeta-z)R(\zeta)}d\zeta \right],
\label{eq:h(z)_def_loops}
\end{equation}

or in the determinant form \cite{TV_det}
\begin{equation}
h(z)=\frac{R(z)}{D}K(z),
\label{eq:h_and_K_def}
\end{equation}
where $z$ lies inside of $\ggh$ and outside all $\ggh_{c,j}$ and $\ggh_{m,j}$, and where

\begin{equation}
K(z)=\frac{1}{2\pi i}\left|\begin{array}{cccc}
\oint_{\ggh_{m,1}}\frac{d\zeta}{R(\zeta)} & \ldots & \oint_{\ggh_{m,1}}\frac{\zeta^{N-1}d\zeta}{R(\zeta)}
& \oint_{\ggh_{m,1}}\frac{d\zeta}{(\zeta-z)R(\zeta)} \\
\ldots & \ldots & \ldots & \ldots \\
\oint_{\ggh_{m,N}}\frac{d\zeta}{R(\zeta)} & \ldots & \oint_{\ggh_{m,N}}\frac{\zeta^{N-1}d\zeta}{R(\zeta)}
& \oint_{\ggh_{m,N}}\frac{d\zeta}{(\zeta-z)R(\zeta)} \\
\oint_{\ggh_{c,1}}\frac{d\zeta}{R(\zeta)} & \ldots & \oint_{\ggh_{c,1}}\frac{\zeta^{N-1}d\zeta}{R(\zeta)}
& \oint_{\ggh_{c,1}}\frac{d\zeta}{(\zeta-z)R(\zeta)} \\
\ldots & \ldots & \ldots & \ldots \\
\oint_{\ggh_{c,N}}\frac{d\zeta}{R(\zeta)} & \ldots & \oint_{\ggh_{c,N}}\frac{\zeta^{N-1}d\zeta}{R(\zeta)}
& \oint_{\ggh_{c,N}}\frac{d\zeta}{(\zeta-z)R(\zeta)} \\
\oint_{\ggh}\frac{f(\zeta)d\zeta}{R(\zeta)} & \ldots & \oint_{\ggh}\frac{\zeta^{N-1}f(\zeta)d\zeta}{R(\zeta)} &
\oint_{\ggh}\frac{f(\zeta)d\zeta}{(\zeta-z)R(\zeta)}
\end{array}\right|
\label{eq:K_function_def}
\end{equation}
and
\begin{equation}
D=det(A),
\label{eq:D_det_def}
\end{equation}
with
\begin{equation}
A=\left(\begin{array}{ccc}
\oint_{\ggh_{m,1}}\frac{d\zeta}{R(\zeta)} & \ldots & \oint_{\ggh_{m,1}}\frac{\zeta^{N-1}d\zeta}{R(\zeta)} \\
\ldots & \ldots & \ldots \\
\oint_{\ggh_{m,N}}\frac{d\zeta}{R(\zeta)} & \ldots & \oint_{\ggh_{m,N}}\frac{\zeta^{N-1}d\zeta}{R(\zeta)} \\
\oint_{\ggh_{c,1}}\frac{d\zeta}{R(\zeta)} & \ldots & \oint_{\ggh_{c,1}}\frac{\zeta^{N-1}d\zeta}{R(\zeta)} \\
\ldots & \ldots & \ldots  \\
\oint_{\ggh_{c,N}}\frac{d\zeta}{R(\zeta)} & \ldots & \oint_{\ggh_{c,N}}\frac{\zeta^{N-1}d\zeta}{R(\zeta)}
\end{array}\right).
\label{eq:A_matrix_def}
\end{equation}

The arcs end points $\left\{\alpha_j\right\}$ satisfy the system

\begin{equation}
K(\alpha_j)=0,\ \ \ j=0,1,\ldots,4N+1.
\label{eq:mod_K_def}
\end{equation}

The dependence on $x$ and $t$  was considered in \cite{TV_det}. This is a simpler situation
when the jump contour $\gg$ in the RHP (\ref{eq:RHP_h_NLS_def}) is independent of the
parameters.

The main related results in \cite{TV_det} are the determinant form (\ref{eq:h_and_K_def}) and
\begin{theorem}
Let $f(z,\vb)$, where $\vb\in B\subset\mathbb{R}^m$. For all $\vb\in B$ assume $f(z,\vb)$
be analytic in on $S\in\mathbb{C}$. Moreover, $\gamma\backslash S$ consists of no
more than finitely points and $f$ continuous on $\gamma$.
The modulation equations (\ref{eq:mod_K_def}) imply the system of $4N+2$ differential
equations
\begin{equation}
\left(\alpha_j\right)_{\beta_k}=-\frac{2\pi i\frac{\partial}{\partial\beta_k}K(\alpha_j)}{D\oint_{\ggh} \frac{f'(\zeta)}{(\zeta-\alpha_j)R(\zeta)}d\zeta}
\end{equation}
\label{thm:TV_det_result}
\end{theorem}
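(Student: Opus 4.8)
The plan is to derive the ODE system by implicit differentiation of the modulation equations \eqref{eq:mod_K_def}, $K(\alpha_j)=0$, treating each endpoint $\alpha_j=\alpha_j(\vb)$ as an implicitly defined function of the parameters. First I would make explicit the three channels through which $K$ in \eqref{eq:K_function_def} depends on $\vb$: (i) the external variable, which is evaluated at $z=\alpha_j$; (ii) the radical $R(\zeta)=R(\zeta,\va)$ of \eqref{radical}, which enters \emph{every} entry of the determinant and carries the dependence on all of $\alpha_0,\dots,\alpha_{4N+1}$ at once, through $\partial_{\alpha_l}R(\zeta)^{-1}=\tfrac12[(\zeta-\alpha_l)R(\zeta)]^{-1}$; and (iii) the integrand $f(\zeta,\vb)$ in the last row, the only place $\vb$ appears explicitly. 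Lemma \ref{lem:gamma_in_Gamma_near} and the following remarks are what make this legitimate: the loop contours $\ggh,\ggh_{m,j},\ggh_{c,j}$ can be held fixed under small perturbations of $\va$ and $\vb$, so only integrands, never contours, are differentiated. This is precisely the purpose of passing to the loop formulation.

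The first computational step is a cofactor expansion of $K(z)$ along its last column. Since every entry in that column is a Cauchy-type integral and the remaining columns are $z$-independent moments, this yields $2\pi i\,K(z)=D\oint_{\ggh}\frac{f(\zeta)}{(\zeta-z)R(\zeta)}\,d\zeta+\sum_r C_r\oint_{C_r}\frac{d\zeta}{(\zeta-z)R(\zeta)}$, where the cofactor of the $f$-row entry is exactly $D=\det A$ from \eqref{eq:D_det_def}, and the $C_r$ are $z$-independent minors of the moment matrix $A$. This isolates all $z$-dependence in the kernels $1/(\zeta-z)$. Differentiating in the external variable and setting $z=\alpha_j$, I then integrate by parts in $\zeta$ on each closed loop via $(\zeta-z)^{-2}=-\partial_\zeta(\zeta-z)^{-1}$. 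On the $f$-row this replaces $f$ by $f'=\partial_\zeta f$ and produces precisely the denominator integral $\oint_{\ggh}\frac{f'(\zeta)}{(\zeta-\alpha_j)R(\zeta)}\,d\zeta$, at the price of remainder terms built from $R'(\zeta)/R(\zeta)^2=\tfrac12 R(\zeta)^{-1}\sum_m(\zeta-\alpha_m)^{-1}$.

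The delicate point, and the one I expect to be the main obstacle, is the decoupling of the resulting linear system. Differentiating $K(\alpha_j)=0$ totally in $\beta_k$ gives, besides the explicit term $\frac{\partial}{\partial\beta_k}K(\alpha_j)$ through $f$, the combination $\sum_l\big(\partial K(\alpha_j)/\partial\alpha_l\big)(\alpha_l)_{\beta_k}$ coming from channel (ii). The claimed formula depends only on data at $\alpha_j$, which forces the Jacobian of $\va\mapsto\big(K(\alpha_j)\big)_j$ to be effectively diagonal: the off-diagonal entries $\partial K(\alpha_j)/\partial\alpha_l$ with $l\neq j$ must vanish at a solution of \eqref{eq:mod_K_def}, and the diagonal contribution must assemble — together with the integration-by-parts remainders from the previous step — into the single clean factor $\tfrac{D}{2\pi i}\oint_{\ggh}\frac{f'(\zeta)}{(\zeta-\alpha_j)R(\zeta)}\,d\zeta$. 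Establishing this requires reconciling three sources of $\alpha_l$-dependence (the explicit integrals, the cofactors $C_r$, and the $R'/R^2$ remainders), and I would carry it out using the full set of constraints $K(\alpha_i)=0$ together with the branch-point structure $R\sim c_m(\zeta-\alpha_m)^{1/2}$ near $\zeta=\alpha_m$, which ties the half-integer singularity of $[(\zeta-\alpha_m)R]^{-1}$ back to the moment columns. This bookkeeping is the heart of the matter.

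Once the Jacobian is shown to be diagonal, the proof concludes immediately. Differentiating the $j$-th equation of \eqref{eq:mod_K_def} reduces to the scalar relation $D\big(\oint_{\ggh}\frac{f'(\zeta)}{(\zeta-\alpha_j)R(\zeta)}\,d\zeta\big)(\alpha_j)_{\beta_k}+2\pi i\,\frac{\partial}{\partial\beta_k}K(\alpha_j)=0$, where the built-in $\tfrac{1}{2\pi i}$ of \eqref{eq:K_function_def} accounts for the $2\pi i$ prefactor; note that the numerator carries $\partial_{\beta_k}f$ while the denominator carries $\partial_\zeta f=f'$, consistent with the two distinct roles of $f$. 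Solving for $(\alpha_j)_{\beta_k}$ gives exactly the asserted expression of Theorem \ref{thm:TV_det_result}, valid wherever $D\neq0$ and the denominator loop integral is nonzero.
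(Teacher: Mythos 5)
Your strategy is the same one the paper (and its source \cite{TV_det}) uses: implicit differentiation of $K(\alpha_j)=0$ with the loop contours held fixed, reduction to the linear system $\sum_l \frac{\partial K(\alpha_j)}{\partial\alpha_l}(\alpha_l)_{\beta_k}=-\frac{\partial}{\partial\beta_k}K(\alpha_j)$, and the two structural facts that (a) the Jacobian $\left\{\frac{\partial K(\alpha_j)}{\partial\alpha_l}\right\}_{j,l}$ is diagonal on the solution set of \eqref{eq:mod_K_def} and (b) the diagonal entry equals $\frac{D}{2\pi i}\oint_{\ggh}\frac{f'(\zeta)}{(\zeta-\alpha_j)R(\zeta)}d\zeta$. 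Be aware that the present paper does not prove this theorem at all: it is quoted from \cite{TV_det}, and the in-paper analogue (the proof of Theorem \ref{thm:perturb_NLS}) simply cites \cite{TV_det} for both (a) and (b). So your proposal is, if anything, more explicit than the paper about where the real work lies.

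That said, as a self-contained proof your write-up has a gap exactly at the step you yourself label ``the heart of the matter'': (a) and (b) are described but not established. The mechanism you sketch is the correct one, and for the record it closes as follows: using $\partial_{\alpha_l}R(\zeta)^{-1}=\tfrac12(\zeta-\alpha_l)^{-1}R(\zeta)^{-1}$ together with the partial-fraction identities $\frac{\zeta^k}{\zeta-\alpha_l}=\frac{\alpha_l^k}{\zeta-\alpha_l}+\mathrm{poly}_{k-1}(\zeta)$ and $\frac{1}{(\zeta-\alpha_j)(\zeta-\alpha_l)}=\frac{1}{\alpha_l-\alpha_j}\bigl(\frac{1}{\zeta-\alpha_l}-\frac{1}{\zeta-\alpha_j}\bigr)$, every $\alpha_l$-derivative of a column of \eqref{eq:K_function_def} is a linear combination of existing columns plus a multiple of the single new column $\oint\frac{d\zeta}{(\zeta-\alpha_l)R(\zeta)}$; the existing-column pieces kill the determinant, and the new-column pieces assemble into a multiple of $K(\alpha_l)$, which vanishes by \eqref{eq:mod_K_def} when $l\neq j$. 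For $l=j$ the same bookkeeping, combined with the $K'(\alpha_j)$ term from the evaluation point and one integration by parts on the closed loops, produces the factor $\frac{D}{2\pi i}\oint_{\ggh}\frac{f'(\zeta)}{(\zeta-\alpha_j)R(\zeta)}d\zeta$ (this is the content of the paper's identity $\frac{\partial K(\alpha_j)}{\partial\alpha_j}=\frac32\lim_{z\to\alpha_j}K'(z)$). Until that computation is actually carried out, your argument establishes the form of the answer conditional on (a) and (b) rather than proving the theorem; everything else in your proposal (fixed loop contours via Lemma \ref{lem:gamma_in_Gamma_near}, cofactor expansion along the last column, the distinction between $\partial_{\beta_k}f$ in the numerator and $\partial_\zeta f$ in the denominator, the nondegeneracy caveats $D\neq0$ and nonvanishing of the loop integral) is correct and consistent with the paper.
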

In particular, one gets (\ref{eq:D_alpha_D_x_intro}) and (\ref{eq:D_alpha_D_t_intro}) for parameters $x$ and $t$.
Note, that the contour $\gamma$ is assumed independent of parameters $x$ and $t$ explicitly.
The dependence on these parameters comes in through the branchpoints $\va=\va(x,t)$.

The main related result in \cite{TV_param} is
\begin{theorem}
Let the nonlinear steepest descent asymptotics for solution $q(x, t, \eps)$ of
the NLS (\ref{eq:1.1_NLS}) be valid at some point $(x_0, t_0)$. If $(x_*, t_*)$
is an arbitrary point, connected
with $(x_0, t_0)$ by a piecewise-smooth path $\Sigma$, if the contour $\gamma(x, t)$ of
the RHP (\ref{eq:RHP_h_NLS_def}) does
not interact with singularities of $f(z)$ as $(x, t)$ varies from $(x_0, t_0)$ to
$(x_*, t_*)$ along
$\Sigma$, and if all the branchpoints are bounded and stay away from the real axis, then the
nonlinear steepest descent asymptotics (with the proper choice of the genus) is also valid
at $(x_*, t_*)$.
\label{thm:TV_param_result}
\end{theorem}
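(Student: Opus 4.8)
The plan is to prove Theorem \ref{thm:TV_param_result} by a continuation (connectedness) argument along the path $\Sigma$. Parametrize $\Sigma$ by $s\in[0,1]$ with $(x(0),t(0))=(x_0,t_0)$ and $(x(1),t(1))=(x_*,t_*)$, and let $V\subseteq[0,1]$ be the set of parameters at which the nonlinear steepest descent asymptotics is valid, in the following precise sense: there exist an integer genus $2N(s)$, branchpoints $\va(s)$ solving the modulation equations (\ref{eq:mod_K_def}), a contour $\gamma(s)\in\Gamma(\va(s),\mu)$ avoiding the singularities of $f$, and real constants $W_j,\Omega_j$, such that the $h$-function produced by (\ref{eq:h_and_K_def})--(\ref{eq:K_function_def}) satisfies \emph{strict} sign conditions, i.e. the exponents controlling the off-diagonal jump entries in (\ref{eq:RHP_h_NLS_def}) have strictly negative real part on the main and complementary arcs, so that the residual RHP for the error is a small-norm problem solvable by a convergent Neumann series. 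Since $(x_0,t_0)\in V$ and $[0,1]$ is connected, it suffices to show that $V$ is both open and closed; then $V=[0,1]$ and in particular $(x_*,t_*)\in V$.

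First I would prove openness. Fix $s_\star\in V$. A valid point is a \emph{regular} point: the branchpoints are distinct and lie off the real axis, so the radical $R(z,\va)$ is well defined and the period determinant $D(\va)$ is nonzero by nondegeneracy of the holomorphic period matrix of a smooth hyperelliptic surface; moreover the loop integral $\oint_{\ggh}\frac{f'(\zeta)}{(\zeta-\alpha_j)R(\zeta)}\,d\zeta$ appearing in the denominators of Theorem \ref{thm:TV_det_result} is nonvanishing (its vanishing would itself signal a break). Because the contour stays away from the singularities of $f$, these integrals depend continuously on $s$, so the right-hand sides of the differential equations in Theorem \ref{thm:TV_det_result} are smooth near $s_\star$. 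The implicit function theorem applied to (\ref{eq:mod_K_def}) then yields a unique smooth branch $\va(s)$ solving the modulation equations in a neighborhood of $s_\star$, with a compatible contour supplied by Lemma \ref{lem:gamma_in_Gamma_near}. Since the sign conditions are strict inequalities on quantities depending continuously on $s$, they persist in a neighborhood, whence $V$ is open.

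The remaining step, closedness, is where the main obstacle lies. Take $s_n\to s_\sharp$ with $s_n\in V$. By hypothesis the branchpoints stay bounded and bounded away from the real axis, so along a subsequence $\va(s_n)\to\va^\sharp$ with $\va^\sharp$ still off $\R$; since the contour never meets a singularity of $f$, all loop integrals in (\ref{eq:h_and_K_def})--(\ref{eq:K_function_def}) pass to the limit and $\va^\sharp$ solves (\ref{eq:mod_K_def}). If the $\alpha_j^\sharp$ are distinct and the sign conditions survive as strict inequalities, then $s_\sharp\in V$ with unchanged genus. The substantive case is a \emph{regular break}: either a main arc shrinks to a point, so that two branchpoints and their conjugates coalesce and both $D$ and the denominator integral degenerate while the genus drops by two, or a sign inequality becomes an equality at an isolated point of an arc, signalling the birth of a new arc and a rise of the genus by two. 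The hard part is to carry out the local analysis of the modulation equations at such a degeneration, exhibit the adjusted branchpoint configuration of the neighboring genus, and verify that its sign conditions again hold strictly just past the break, so that $s_\sharp\in V$ \emph{with the proper choice of the genus}. This is exactly the content of the ``regular break'' analysis, and the three hypotheses---boundedness, separation from $\R$, and non-interaction of $\gamma$ with the singularities of $f$---are precisely what exclude all non-regular degenerations (escape to infinity, pinching against the real axis, or collision of the contour with a singularity), leaving only regular breaks across which the asymptotics continues. With $V$ open, closed, and nonempty in the connected interval $[0,1]$, the theorem follows.
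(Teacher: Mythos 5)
This statement is not proved in the paper at all: it is quoted verbatim as ``the main related result in \cite{TV_param}'' (Tovbis--Venakides, Comm.\ Math.\ Phys.\ 295 (2010)), and the present authors use it as an imported black box. So there is no in-paper proof to compare yours against, and any assessment must be of your argument on its own merits.

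On those merits there is a genuine gap. Your open/closed decomposition of the parameter interval is the right skeleton, and the openness half (implicit function theorem on $\vec K(\va)=\vec 0$ via the diagonal Jacobian of Theorem \ref{thm:TV_det_result}, plus persistence of strict sign inequalities) is sound and matches the machinery the paper does develop for the $\mu$-perturbation. But the closedness half is where the entire content of the theorem lives, and you do not prove it: at a limit point where a main arc collapses or a sign inequality degenerates, one must (i) show the degeneration is of a specific local type, (ii) construct the branchpoint configuration of the neighboring genus as a solution of the modulation equations of that genus, and (iii) verify that its sign conditions hold strictly on the far side of the break. You name these steps and then write that ``this is exactly the content of the regular break analysis,'' which is to assume the conclusion. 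Moreover, your claim that the three hypotheses (boundedness, separation from $\R$, non-interaction with singularities of $f$) ``exclude all non-regular degenerations'' is unjustified: none of them forbids, say, a triple collision of branchpoints, a collision of two branchpoints in the same half-plane, a simultaneous degeneration of several inequalities, or the vanishing of $\oint_{\ggh} f'(\zeta)\,d\zeta/\bigl((\zeta-\alpha_j)R(\zeta)\bigr)$ without any collision --- and the introduction of the present paper itself is careful to say that \cite{TV_param} establishes continuation ``in the case of a regular break,'' i.e.\ regularity is an input to that analysis, not a consequence of the listed hypotheses. As written, your proposal is a correct reduction of the theorem to its hardest step, not a proof of it.
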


We extend Theorem \ref{thm:TV_det_result} and make partial progress in the direction
of Theorem \ref{thm:TV_param_result} in the case when the jump contour explicitly depends
on the parameter $\mu$. We require that the point of logarithmic singularity
of $f=f(z,\mu)$, $z_0=\mm$ is always on $\gamma$.
Additionally we prove preservation of genus for all $x>0$, $t>0$, $\mu>0$,
under certain conditions
which guarantee that the parameters are away from asymptotic solution breaks
(see Theorem \ref{thm:pres_genus_NLS}).
In particular, the genus is preserved in a neighborhood of the special
value of the parameter $\mu=2$.
Thus we obtain that for all $x>0$, $t>0$ (except on the first breaking curve)
there is a small neighborhood such that for all $\mu<2$ in the neighborhood,
the genus is the same as for $\mu=2$, where it is known to be $0$ or $2$.

\section{$\mu$-dependence in the semiclassical focusing NLS}

\subsection{Setup\label{Setup}}

To apply the methods from \cite{TV_det} we need analyticity of $f(z,\mu)$ in
the parameter $\mu$.

The function $f(z)$ obtained in  \cite{TVZzero} from a semiclassical approximation of the exactly derived scattering data for NLS with initial condition (\ref{IC_NLS_special}) \cite{TV_ScatData} :
\begin{equation}\label{f}
f(z,\mu,x,t)=\left(\frac{\mu}{2}-z\right)\left[\frac{\pi
i}{2}+\ln\left(\frac{\mu}{2}-z\right)\right]+\frac{z+T}{2}\ln\left(z+T\right)
+\frac{z-T}{2}\ln\left(z-T\right)
\end{equation}
\begin{equation}\nonumber
-T\tanh^{-1}\frac{T}{\mu/2}-xz-2tz^2 +\frac{\mu}{2}\ln 2, \quad
\mbox{when}\ \ \Im z\ge 0 \label{eq:f-function_NLS}
\end{equation}
and
\begin{equation}
f(z)=\overline{f(\overline{z})},\quad \mbox{when}\ \ \Im z< 0,
\label{eq:Schwarz_reflection}
\end{equation}
where the branchcuts are chosen as the following: for $0<\mu<2$ the logarithmic branchcut is
 from $z=\mm$ along the real
axis to $+\infty$, from $z=T$ to $0$ and along the real axis to $+\infty$, from
$z=-T$ to $0$ and along the real axis to $-\infty$; for $\mu\ge 2$ - the branchcuts are from $z=T$ to $+\infty$
and from $z=-T$ to $-\infty$ along the real axis, where
\begin{equation}
T=T(\mu)=\sqrt{\frac{\mu^2}{4}-1},\ \ \ \Im T\ge 0.
\end{equation}
For $\mu\ge 2$, $T\ge 0$ is real and for $0<\mu<2$, $T$ is purely imaginary with $\Im T>0$.

\begin{lemma}
$f(z,\mu)$ and $f'(z,\mu)$ are analytic in $\mu$ for $\mu>0$, $x>0$, $t>0$, for all $z$, $\Im z\neq 0$, $z\notin [-T,T]$.
\label{lem:f_analytic}
\end{lemma}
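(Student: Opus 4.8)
The plan is to treat $f(z,\mu)$ as an explicit composition of elementary functions and to verify analyticity in $\mu$ branch-by-branch, the only delicate issue being the $\mu$-dependence of the auxiliary quantity $T=T(\mu)=\sqrt{\mu^2/4-1}$ and of the branchcuts. First I would fix $z$ with $\Im z>0$ (the lower half-plane then follows by the Schwarz reflection \eqref{eq:Schwarz_reflection}, which manifestly preserves analyticity in the real parameter $\mu$) and inspect each of the six terms in the formula \eqref{f}. The terms $-xz-2tz^2+\tfrac{\mu}{2}\ln 2$ are trivially entire in $\mu$. The term $\left(\tfrac{\mu}{2}-z\right)\left[\tfrac{\pi i}{2}+\ln\left(\tfrac{\mu}{2}-z\right)\right]$ is analytic in $\mu$ provided the argument $\tfrac{\mu}{2}-z$ stays away from the logarithmic branchcut; since $\Im z\neq 0$ and the excluded singular point is exactly $z_0=\mu/2$, the argument never vanishes and, with the branchcut chosen along the real axis, the composition $\mu\mapsto \ln(\tfrac{\mu}{2}-z)$ is holomorphic for $\mu>0$.

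Next I would handle the two terms $\tfrac{z\pm T}{2}\ln(z\pm T)$ and the inverse-hyperbolic term $-T\tanh^{-1}\!\big(\tfrac{T}{\mu/2}\big)$, which carry all the genuine $\mu$-dependence through $T$. The key sub-step is to show that $T(\mu)=\sqrt{\mu^2/4-1}$ is itself analytic in $\mu$ away from $\mu=\pm 2$, and to check that the relevant arguments $z\pm T$ and $T/(\mu/2)$ avoid their respective branchcuts for $\Im z\neq 0$ and $z\notin[-T,T]$. Here one must be careful at $\mu=2$, where $T=0$ is a branch point of the square root, but I expect this to be removable: because $T$ enters $f$ only through the combinations $\tfrac{z+T}{2}\ln(z+T)+\tfrac{z-T}{2}\ln(z-T)$ and $T\tanh^{-1}(T/(\mu/2))$, both of which are \emph{even} functions of $T$, the apparent square-root singularity at $\mu=2$ cancels and $f$ depends on $\mu$ only through $T^2=\mu^2/4-1$, which is entire. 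I would make this explicit by expanding the even combinations as power series in $T^2$ (equivalently, in $\mu^2/4-1$), thereby exhibiting analyticity across the critical value $\mu=2$; the hypothesis $z\notin[-T,T]$ guarantees that neither $z+T$ nor $z-T$ crosses the logarithmic cut during this expansion.

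Finally, for $f'(z,\mu)$ I would simply differentiate \eqref{f} in $z$ and repeat the same term-by-term inspection; differentiation in $z$ removes the $z\ln z$-type terms' outer factor but leaves logarithms and the same arguments $z\pm T$, $\tfrac{\mu}{2}-z$, so the analyticity domain in $\mu$ is unchanged. The main obstacle is the behavior near $\mu=2$: one must verify rigorously that the evenness in $T$ genuinely removes the branch-point singularity of the square root and that the accompanying change of branchcut prescription (from the three-segment cut for $0<\mu<2$ to the two-segment cut for $\mu\ge 2$) does not introduce a discontinuity in $\mu$ for the fixed $z$ under consideration. I expect this to follow from a direct check that, for any fixed $z$ with $\Im z\neq 0$ and $z\notin[-T,T]$, there is a neighborhood of any target $\mu_0>0$ in which the arguments of all logarithms remain in a common slit domain, so that a single analytic branch represents $f$ throughout; the even-in-$T$ structure then certifies smoothness precisely at $\mu_0=2$.
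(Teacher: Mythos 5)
Your proposal is correct and follows essentially the same route as the paper: term-by-term inspection away from $\mu=2$, and at $\mu=2$ the observation that $T$ enters $f$ only through combinations even in $T$, verified by power-series expansion in $T$ so that $f$ is actually analytic in $T^2=\mu^2/4-1$ and hence in $\mu$. The paper carries out exactly the expansions you sketch (for $\tfrac{z+T}{2}\ln(z+T)+\tfrac{z-T}{2}\ln(z-T)$ and for $T\tanh^{-1}(T/(\mu/2))$), and for $f'$ it notes the two logarithms combine into $\tfrac12\ln(z^2-\mu^2/4+1)$, making the evenness manifest.
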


\begin{proof}

Consider
\begin{equation}
f'(z,\mu)=-\frac{\pi i}{2}-\ln\left(\frac{\mu}{2}-z\right)
+\frac{1}{2}\ln\left(z^2-\frac{\mu^2}{4}+1\right)-x-4tz,
\end{equation}
which analytic in $\mu>0$, for $\Im z\neq 0$, $z\notin [-T,T]$.

For $\mu>0$, $\mu\neq 2$, $f(z,\mu)$ is clearly analytic in $\mu$ for $\Im z\neq 0$. At $\mu=2$ ($T=0$) we find the power
series of $f(z,\mu)$ in $T$ and show that it contains only even powers. Since
\begin{equation}
T^{2k}=\left(\frac{\mu^2}{4}-1\right)^{k}=\frac{(\mu+2)^k}{4^k}(\mu-2)^k
\end{equation}
it will show analyticity of $f(z,\mu)$ in $\mu$.

Start with expanding basic terms in series at $T=0$
\begin{equation}
\frac{1}{\mu/2}=\sqrt{1+T^2}^{-1}=\sum_{k=0}^\infty c_k T^{2k},\ \ \ \ln\left(z\pm T\right)=z\ln z \sum_{n=1}^{\infty}\frac{(-1)^{n+1}}{n}\left(\pm\frac{T}{z}\right)^n.
\end{equation}
Then the logarithmic terms in (\ref{eq:f-function_NLS}) become
\begin{equation}
\frac{z+T}{2}\ln\left(z+T\right)+\frac{z-T}{2}\ln\left(z-T\right)
\end{equation}
\begin{equation}
=z\ln z - z \sum_{n\ is\ even} \frac{1}{n}\left(\frac{T}{z}\right)^n + T\sum_{n\ is\ odd}\frac{1}{n}\left(\frac{T}{z}\right)^n
\end{equation}
\begin{equation}
=z\ln z + \sum_{k=1}^{\infty} \frac{1}{2k(2k-1)z^{2k-1}} T^{2k},
\end{equation}
which has only even powers of $T$ and is analytic in $\mu$ for $\Im z\neq 0$.
Next we consider the inverse hyperbolic tangent term in (\ref{eq:f-function_NLS}) and
taking into account that $\tanh^{-1}z$ is an odd function
\begin{equation}
T\tanh^{-1}\frac{T}{\mu/2}=T\tanh^{-1}\frac{T}{\sqrt{1+T^2}}
=T\tanh^{-1}\sum_{k=0}^\infty c_k T^{2k+1}
\end{equation}
\begin{equation}
=T\sum_{k=0}^\infty \tilde{c}_k T^{2k+1}=\sum_{k=0}^\infty \tilde{c}_k T^{2k+2},
\end{equation}
which also has only even powers of $T$.

So $f(z,x,t,\mu)$ is analytic in $\mu$ for $\mu>0$, $x>0$, $t>0$, $\Im z\neq 0$, $z\notin [-T(\mu),T(\mu)]$.

\end{proof}

Then for $\Im z> 0$
\begin{equation}
\frac{\partial f}{\partial\mu}(z,\mu)=\frac{\pi i}{4}+\frac{1}{2}\ln\left(\frac{\mu}{2}-z\right)+\ln 2+ \frac{\mu}{8T}\left[\ln(z+T)-\ln(z-T)-2\tanh^{-1}\frac{2T}{\mu}\right]
\label{eq:f'_mu}
\end{equation}

where $\tanh^{-1} x = x+O(x^3)$, as $x\to 0$ then

\begin{equation}
\frac{\partial f}{\partial\mu}(z,\mu) = \frac{\pi i}{4}+\frac{1}{2}\ln\left(\frac{\mu}{2}-z\right)+\ln 2+ \frac{\mu}{4z} -\frac{1}{2}+O(T),\ T\to 0.
\end{equation}

So $\mu=2$ is a removable singularity for $f_\mu(z,\mu)$ and
\begin{equation}
\lim_{\mu\to 2, T\to 0} \frac{\partial f}{\partial\mu}(z,\mu)=\frac{\pi i}{4}+\frac{1}{2}\ln\left(1-z\right)+\ln 2 + \frac{1}{2z} -\frac{1}{2},
\end{equation}
which is analytic in $z$ for $\Im z\neq 0$.

Note the jump of $f(z)$ is caused by the Schwarz reflection (\ref{eq:Schwarz_reflection}) on the real axis and
it is linear in $z$ since $\Im f$ is a linear function on the real axis (as a limit)
near $\mm$ with $\Im f \left(\mm\right)=0$ \cite{TVZzero}.

\subsection{Parametric dependence of scalar RHP}

The main difficulty is the dependence of $f(z)$ (thus the RHP (\ref{eq:RHP_h_NLS_def}))
and the modulation
equations (\ref{eq:mod_K_def}) on parameter $\mu$ which also controls the logarithmic
branchpoint $z=\mm$ on the contour $\ggh$. We show that the dependence on $\mu$ is smooth.

To solve $\vec{K}(\va,\mu)=\vec{0}$, we need nondegeneracy of the system and apply the implicit function theorem.
The following technical lemma simplifies expressions for partial derivatives in $\mu$
of (\ref{eq:h(z)_def_loops}) and (\ref{eq:K_function_def}).

\begin{lemma}$ $\\

Let function $f$ be given by (\ref{eq:f-function_NLS}) and there is a contour $\gamma_0=\gamma(\va,\mu_0)\in\Gamma(\va,\mu_0)$
which has fixed arcs end points $\va$.
Then there is an open neighborhood of $\mu_0$ such that for all $\mu$ in the neighborhood of $\mu_0$
there is $\ggh(\mu)\in\hat\Gamma(\gamma,\va,\mu)$ and for all $j=0,1,\ldots,4N+1$, $n\in\mathbb{N}$

\begin{equation}
\frac{\partial }{\partial\mu}\oint_{\ggh(\mu)}\frac{\zeta^n f(\zeta,\mu)d\zeta}{R(\zeta,\va)}
=\oint_{\ggh(\mu)}\frac{\zeta^n \frac{\partial f(\zeta,\mu)}{\partial\mu}d\zeta}{R(\zeta,\va)},
\label{eq:Lemma3.2_int1}
\end{equation}

\begin{equation}
\frac{\partial }{\partial\mu} \oint_{\ggh(\mu)}\frac{f(\zeta,\mu)d\zeta}{(\zeta-\alpha_j)R(\zeta,\va)}
=\oint_{\ggh(\mu)}\frac{\frac{\partial f(\zeta,\mu)}{\partial\mu}d\zeta}{(\zeta-\alpha_j)R(\zeta,\va)},
\label{eq:Lemma3.2_int2}
\end{equation}

\begin{equation}
\frac{\partial }{\partial\mu} \oint_{\ggh_{m,k}}\frac{\zeta^n d\zeta}{R(\zeta,\va)}=0, \ \ \
\frac{\partial }{\partial\mu} \oint_{\ggh_{m,k}}\frac{d\zeta}{(\zeta-\alpha_j)R(\zeta,\va)}=0, \ \ k=1,2,\ldots,N,
\label{eq:Lemma3.2_int3-4}
\end{equation}

\begin{equation}
\frac{\partial }{\partial\mu} \oint_{\ggh_{c,k}}\frac{\zeta^n d\zeta}{R(\zeta,\va)}=0, \ \ \
\frac{\partial }{\partial\mu} \oint_{\ggh_{c,k}}\frac{d\zeta}{(\zeta-\alpha_j)R(\zeta,\va)}=0, \ \ k=1,2,\ldots,N.
\label{eq:Lemma3.2_int5-6}
\end{equation}

\label{lem:par_int_zero_NLS}

\end{lemma}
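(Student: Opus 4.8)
The plan is to exploit the single most important structural fact available here: the branchpoints $\va$ are held fixed, so the radical $R(\zeta,\va)$ carries no $\mu$-dependence at all. Consequently every trace of $\mu$ in the integrands of (\ref{eq:Lemma3.2_int1})--(\ref{eq:Lemma3.2_int5-6}) enters only through the function $f(\zeta,\mu)$ and through the forced motion of the contours that must track the singular point $z_0=\mm$. I would therefore split the six identities into two groups according to whether the relevant loop is forced to meet $z_0$.

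First I would dispose of (\ref{eq:Lemma3.2_int3-4}) and (\ref{eq:Lemma3.2_int5-6}). Their integrands $\zeta^n/R(\zeta,\va)$ and $1/[(\zeta-\alpha_j)R(\zeta,\va)]$ are \emph{completely} independent of $\mu$. The loops $\ggh_{m,k}$ and $\ggh_{c,k}$ with $k\ge 1$ encircle arcs bounded away from $z_0=\mm$, so by Lemma \ref{lem:gamma_in_Gamma_near} and the remark following Definition \ref{def:gamma_hat} each of them may be taken to be one and the same contour for all $\mu$ near $\mu_0$. With both integrand and contour independent of $\mu$, every such integral is a genuine constant and its $\mu$-derivative vanishes. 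This is precisely why $\ggh_{m,0}$ is absent from the statement: it is the only main loop that surrounds the arc $\gg_{m,0}$ carrying $z_0$, and it cannot be frozen.

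The substance of the lemma is (\ref{eq:Lemma3.2_int1}) and (\ref{eq:Lemma3.2_int2}), where $\ggh$ is constrained to pass through the moving point $z_0=\mm$ and so genuinely depends on $\mu$. I would write $\ggh(\mu)$ as a smooth, conjugation-symmetric family $\zeta=\zeta(s,\mu)$, $s\in\R/\Z$, chosen to be independent of $\mu$ outside a small interval about the value $s_0$ with $\zeta(s_0,\mu)=\mm$, and to be a rigid translate by $\mm$ near $s_0$, so that $\partial_\mu\zeta\equiv\tfrac12$ there; this is legitimate since away from $z_0$ the integrand is analytic in $\zeta$. Writing $g$ for $\zeta^n f/R$ (respectively $f/[(\zeta-\alpha_j)R]$) and noting $\partial_\mu g=\zeta^n\,\partial_\mu f/R$ because $R$, $\zeta^n$, $\alpha_j$ do not depend on $\mu$, the Leibniz rule for $I(\mu)=\oint_{\ggh(\mu)}g\,d\zeta$ reads
\[
\frac{dI}{d\mu}=\oint_{\ggh(\mu)}\frac{\partial g}{\partial\mu}\,d\zeta
+\int (\partial_\zeta g)\,(\partial_\mu\zeta)\,(\partial_s\zeta)\,ds
+\int g\,\partial_s(\partial_\mu\zeta)\,ds .
\]
Integrating the last integral by parts in $s$ around the closed loop, the boundary term vanishes because $g$ is continuous on $\ggh$, and the resulting $-\int(\partial_\zeta g)(\partial_s\zeta)(\partial_\mu\zeta)\,ds$ cancels the middle integral. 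What survives is exactly $\frac{dI}{d\mu}=\oint_{\ggh(\mu)}\partial_\mu g\,d\zeta$, which is the asserted identity, the right-hand integral converging because $\partial_\mu f$ has at worst the integrable logarithmic singularity displayed in (\ref{eq:f'_mu}).

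The hard part is making this cancellation rigorous across the moving branch point. I would excise a parameter interval of length $2\delta$ about $s_0$, perform the integration by parts on the complementary arc, and let $\delta\to0$. The three contributions of the excised sub-arc tend to zero because $g$ is bounded there while its arclength shrinks; the two new boundary terms converge to $g(z_0)\,[\partial_\mu\zeta(s_0^-)-\partial_\mu\zeta(s_0^+)]=0$, using that $f$ extends continuously to $z_0$ (its $w\ln w$ singularity, with $w=\mm-\zeta$, vanishes there) and that $\partial_\mu\zeta$ is continuous at $s_0$ by the rigid-translate choice; and $\oint_{\ggh\setminus\ell_\delta}\partial_\mu g\,d\zeta$ converges to the claimed integral by dominated convergence against the integrable logarithm. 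The crux throughout is that the singularity the contour is forced to carry is mild enough — continuous for $f$, merely logarithmic hence integrable for $\partial_\mu f$ — that neither the motion of the contour nor that of the singular point leaves any residual term beyond plain differentiation under the integral sign.
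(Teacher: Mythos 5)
Your proposal is correct, but it reaches (\ref{eq:Lemma3.2_int1})--(\ref{eq:Lemma3.2_int2}) by a genuinely different route than the paper. For (\ref{eq:Lemma3.2_int3-4})--(\ref{eq:Lemma3.2_int5-6}) the two arguments coincide: the integrands carry no $\mu$, and the loops stay away from $z_0=\mm$, so the integrals are constants. For the two $f$-integrals, the paper works with finite differences and removes the $\mu$-dependence of the contour \emph{before} taking any limit: both $\ggh(\mu)$ and $\ggh(\mu+\Delta\mu)$ are collapsed onto a fixed real segment $[\delta_1,\delta_2]$ containing both singular points, traversed from above and below, so that the two loops literally coincide; the price is having to control the jump of $f$ across that segment, which is manageable only because the jump is explicitly $\pi i|z_0(\mu)-z|$ and hence small near $z_0$. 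You instead keep the contour moving, parametrize it, apply the transport (Leibniz) formula, and kill the contour-motion terms by an integration by parts whose boundary contributions vanish by continuity of $f$ at $z_0$. Your version uses less of the specific structure of $f$ (only continuity of $f$ at $z_0$ and integrability of the logarithmic singularities of $f'$ and $f_\mu$), so it is somewhat more robust, at the cost of the parametrization and excision bookkeeping; the paper's version buys an immediate reduction to a single fixed contour. Two small points to tighten: the loop $\ggh$ is pinched at $z_0$ (it must meet the real axis only there while enclosing arcs in both half-planes), so it passes through $z_0$ at two parameter values and the excision and boundary-term cancellation must be performed at both; and the blanket claim that the excised contributions vanish ``because $g$ is bounded'' is not quite right for the term containing $\partial_\zeta g$, which involves $f'$ and is unbounded at $z_0$ --- the correct reason is that its logarithmic singularity is integrable, so the integral over a shrinking arc still tends to zero.
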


\begin{proof}$ $\\

The idea of the proof is to consider finite differences and take the limit as $\Delta\mu\to 0$.
The complication is that both the integrands and the contours of integration depend on $\mu$.

Denote the integral on the left in (\ref{eq:Lemma3.2_int1}) as $I_1$
\begin{equation}
I_1(\mu)=\oint_{\ggh(\mu)}\frac{\zeta^n f(\zeta,\mu)}{R(\zeta,\va)}d\zeta.
\end{equation}
where $\ggh(\mu)\in\hat\Gamma(\gamma,\va,\mu)$.
Consider
\[
\frac{I_1(\mu+\Delta\mu)-I_1(\mu)}{\Delta \mu},
\]
with small real $\Delta\mu\neq 0$. There are two logarithmic branchcuts near the contours of integration:
in $f(z,\mu)$ and in $f(z,\mu+\Delta\mu)$ with both branchcuts are chosen from $z_0(\mu)=\mm$
and $z_0(\mu+\Delta\mu)$ horizontally to the right along the real axis. Additionally,
these functions have a jump on the real axis for $z<\mm$ from Schwarz symmetry.

We choose some fixed points $\delta_1$ and $\delta_2$ to be real, $\delta_1<\mm-\frac{|\Delta\mu|}{2}<\mm+\frac{|\Delta\mu|}{2}<\delta_2$.
Both contours of integration $\ggh(\mu)$, $\ggh(\mu+\Delta\mu)$ are pushed to the real axis near
$z_0$ and split into $\left[\delta_1,\delta_2\right]:=\left[\delta_1+i0,\delta_2+i0\right]\cup\left[\delta_2-i0,\delta_1-i0\right]$
and its complement. On the complement, we can also deform both contours to coincide. So $\ggh(\mu+\Delta\mu)=\ggh(\mu)$.

Note that across $\left[\delta_1,\delta_2\right]$, $f(z,\mu)$ has a jump $\pi i \left|z_0(\mu)-z\right|$ and
$f(z,\mu+\Delta\mu)$ has a jump $\pi i \left|z_0(\mu+\Delta\mu)-z\right|$. So contributions near $z_0$ in both
cases are small.

Then
\begin{equation}
\frac{I_1(\mu+\Delta\mu)-I_1(\mu)}{\Delta \mu}
=\frac{1}{\Delta\mu}\left(\oint_{\ggh(\mu+\Delta\mu)}\frac{\zeta^n f(\zeta,\mu+\Delta\mu)}{R(\zeta,\va)}d\zeta
-\oint_{\ggh(\mu)}\frac{\zeta^n f(\zeta,\mu)}{R(\zeta,\va)}d\zeta\right)\\
\end{equation}
we add and subtract $\oint_{\ggh(\mu)}\frac{\zeta^n f(\zeta,\mu+\Delta\mu)}{R(\zeta,\va)}d\zeta$
\begin{equation}
=\frac{1}{\Delta\mu}\left(\oint_{\ggh(\mu+\Delta\mu)-\ggh(\mu)}\frac{\zeta^n f(\zeta,\mu+\Delta\mu)}{R(\zeta,\va)}d\zeta
+\oint_{\ggh(\mu)}\frac{\zeta^n (f(\zeta,\mu+\Delta\mu)-f(\zeta,\mu))}{R(\zeta,\va)}d\zeta\right)
\end{equation}
The first integral is 0, because $\ggh(\mu+\Delta\mu)=\ggh(\mu)$.

Thus
\begin{equation}
\frac{I_1(\mu+\Delta\mu)-I_1(\mu)}{\Delta \mu}
=\oint_{\ggh(\mu)}\frac{\zeta^n \frac{f(\zeta,\mu+\Delta\mu)-f(\zeta,\mu)}{\Delta \mu}}{R(\zeta,\va)}d\zeta .
\end{equation}

The last step is to take the limit as $\Delta\mu\to 0$ and to interchange it with the integral. The contour
of integration is split into two:
a small neighborhood near $z_0$ and its complement. For the integral near $z_0$, by a direct computation
can be shown that the limit can be passed under the integral. The integral over the second part of the contour
has the integrand uniformly bounded
in $\mu$ since $\log\left(\zeta-\mm\right)$ in $\frac{\partial f}{\partial\mu}$
is uniformly bounded away from $\mm$, so the limit and the integral can be interchanged.
This completes the proof for the first integral (\ref{eq:Lemma3.2_int1}).

The second integral (\ref{eq:Lemma3.2_int2}) is done similarly. The rest of the integrals (\ref{eq:Lemma3.2_int3-4})-(\ref{eq:Lemma3.2_int5-6})
are independent of $\mu$ since the only dependence on $\mu$ sits in $z_0(\mu)\in\gamma_{m,0}$.

\end{proof}

Using Lemma \ref{lem:par_int_zero_NLS},
\begin{equation}
\frac{\partial K}{\partial\mu}(\alpha_j,\va,\mu)=
\frac{1}{2\pi i}\left|\begin{array}{cccc}
\oint_{\ggh_{m,1}}\frac{d\zeta}{R(\zeta)} & \ldots & \oint_{\ggh_{m,1}}\frac{\zeta^{N-1}d\zeta}{R(\zeta)}
& \oint_{\ggh_{m,1}}\frac{d\zeta}{(\zeta-\alpha_j)R(\zeta)} \\
\ldots & \ldots & \ldots & \ldots \\
\oint_{\ggh_{m,N}}\frac{d\zeta}{R(\zeta)} & \ldots & \oint_{\ggh_{m,N}}\frac{\zeta^{N-1}d\zeta}{R(\zeta)}
& \oint_{\ggh_{m,N}}\frac{d\zeta}{(\zeta-\alpha_j)R(\zeta)} \\
\oint_{\ggh_{c,1}}\frac{d\zeta}{R(\zeta)} & \ldots & \oint_{\ggh_{c,1}}\frac{\zeta^{N-1}d\zeta}{R(\zeta)}
& \oint_{\ggh_{c,1}}\frac{d\zeta}{(\zeta-\alpha_j)R(\zeta)} \\
\ldots & \ldots & \ldots & \ldots \\
\oint_{\ggh_{c,N}}\frac{d\zeta}{R(\zeta)} & \ldots & \oint_{\ggh_{c,N}}\frac{\zeta^{N-1}d\zeta}{R(\zeta)}
& \oint_{\ggh_{c,N}}\frac{d\zeta}{(\zeta-\alpha_j)R(\zeta)} \\
\oint_{\ggh}\frac{f_\mu(\zeta)d\zeta}{R(\zeta)} & \ldots & \oint_{\ggh}\frac{\zeta^{N-1}f_\mu(\zeta)d\zeta}{R(\zeta)} &
\oint_{\ggh}\frac{f_\mu(\zeta)d\zeta}{(\zeta-\alpha_j)R(\zeta)}
\end{array}\right|,
\label{eq:DK/Dmu_function_NLS}
\end{equation}
where $f_\mu$ is given by (\ref{eq:f'_mu}).

\begin{lemma}$ $\\
Let $f$ be given by (\ref{eq:f-function_NLS}) and a contour $\gamma_0=\gamma(\va_0,\mu_0)\in\Gamma(\va_0,\mu_0)$
has the arcs end points $\va_0$. Then there are open neighborhoods of $\va_0$ and $\mu_0$
such that for all $\va$ and $\mu$ in the neighborhoods of $\va_0$ and $\mu_0$
respectively there is a contour $\gamma=\gamma(\va,\mu)\in\Gamma(\va,\mu)$ and
\begin{equation}
K_j(\va,\mu):=K(\alpha_j,\va,\mu),\ \ \ \ j=0,1,\ldots, 4N+1
\end{equation}
is continuously differentiable in $\va$ and in $\mu$.
\label{lem:smooth_F_mu}
\end{lemma}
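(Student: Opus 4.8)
The plan is to show that $K_j(\va,\mu)$ inherits $C^1$-regularity from its ingredients, using the interchange lemma already in hand. First I would record the existence of the contour: by Lemma \ref{lem:gamma_in_Gamma_near} and the Remark following Definition \ref{def:gamma_hat}, for $(\va,\mu)$ in a sufficiently small neighborhood of $(\va_0,\mu_0)$ there is a contour $\gamma(\va,\mu)\in\Gamma(\va,\mu)$ together with loop contours $\ggh(\mu),\ggh_{m,k},\ggh_{c,k}$, and the moment loops $\ggh_{m,k},\ggh_{c,k}$ may be fixed independently of $\va$ and of $\mu$ while $\ggh=\ggh(\mu)$ need only track the singularity $z_0=\mm$. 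Since $K_j=K(\alpha_j,\va,\mu)$ is, by (\ref{eq:K_function_def}), the determinant of a matrix whose entries are exactly those contour integrals (with the last-column parameter $z$ set equal to the component $\alpha_j$ of $\va$), and since the determinant is a polynomial, hence $C^\infty$, function of its entries, it suffices to prove that each matrix entry, viewed as a function of $(\va,\mu)$, is continuously differentiable.

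For the $\mu$-derivatives I would invoke Lemma \ref{lem:par_int_zero_NLS} directly. The entries built only from $R$, namely the rows over $\ggh_{m,k}$ and $\ggh_{c,k}$, are $\mu$-independent by (\ref{eq:Lemma3.2_int3-4})--(\ref{eq:Lemma3.2_int5-6}), so their $\mu$-derivative vanishes; for the entries carrying $f$ (the last row over $\ggh(\mu)$), (\ref{eq:Lemma3.2_int1})--(\ref{eq:Lemma3.2_int2}) identify $\partial_\mu$ of the integral with the same integral with $f$ replaced by $f_\mu$ of (\ref{eq:f'_mu}), which is precisely the content of (\ref{eq:DK/Dmu_function_NLS}). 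It then remains to check that these $\mu$-derivatives are continuous in $(\va,\mu)$, including across the critical value $\mu=2$. This holds because $f_\mu$ extends to a function jointly continuous in $(\zeta,\mu)$ there, since $\mu=2$ is a removable singularity of $f_\mu$ as shown just after the proof of Lemma \ref{lem:f_analytic}, while $R(\zeta,\va)$ is jointly continuous and bounded away from zero on the fixed part of $\ggh$; the only delicate region, a neighborhood of the moving logarithmic point $z_0=\mm$, is controlled by the same split-and-dominate estimate used in the proof of Lemma \ref{lem:par_int_zero_NLS}, the $\ln(\mm-\zeta)$ singularity being integrable and its contribution continuous in $\mu$.

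For the $\va$-derivatives the contours are genuinely fixed, so I would differentiate under the integral sign. Each branchpoint $\alpha_i$ enters the integrands only through $R(\zeta,\va)^{-1}$, with $\partial_{\alpha_i}R^{-1}=\tfrac12(\zeta-\alpha_i)^{-1}R^{-1}$, and the last-column entries depend additionally on $\alpha_j$ through the factor $(\zeta-\alpha_j)^{-1}$ coming from the substitution $z=\alpha_j$. In every case the resulting integrand is analytic in $\zeta$ on a fixed annular neighborhood of the relevant loop contour, because the branchpoints and the point $z_0$ stay strictly inside the loops and bounded away from them as $(\va,\mu)$ ranges over a small enough neighborhood; hence differentiation under the integral sign is legitimate and, by joint continuity of the integrands, yields continuous partial derivatives. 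Combining the three steps, all first-order partials of $K_j$ in $\va$ and in $\mu$ exist and are continuous, so $K_j\in C^1$.

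The genuine difficulty has already been isolated and dispatched in Lemma \ref{lem:par_int_zero_NLS}, namely the simultaneous $\mu$-dependence of the integrand $f$ and of the contour $\ggh(\mu)$ that carries the moving, non-removable logarithmic singularity $z_0=\mm$; given that lemma, the present statement is essentially a regularity-bookkeeping result. The one remaining point requiring care, which I expect to be the main obstacle here, is the joint continuity of the derivatives at $\mu=2$, where the branchcut geometry of $f$ changes and $T\to 0$; this is exactly why the removability of the singularity of $f_\mu$ established after Lemma \ref{lem:f_analytic} is essential, and I would lean on it to conclude continuity of $\partial_\mu K_j$ through the critical value.
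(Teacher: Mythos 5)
Your proposal is correct and follows essentially the same route as the paper's proof: contour existence via Lemma \ref{lem:gamma_in_Gamma_near}, smoothness in $\va$ from the determinant structure and the analyticity of $R(\zeta,\va)$ on the fixed loop contours, and smoothness in $\mu$ from Lemma \ref{lem:par_int_zero_NLS} together with the integrability of the logarithmic singularity of $f_\mu$ near $z_0=\mu/2$. You simply supply more detail than the paper does (the polynomial dependence of the determinant on its entries, the explicit $\partial_{\alpha_i}R^{-1}$ computation, and the continuity of $\partial_\mu K_j$ across $\mu=2$ via the removable singularity of $f_\mu$), all of which is consistent with the paper's terser argument.
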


\begin{proof}$ $\\
First we notice that since $\gamma_0\in\Gamma(\va_0,\mu_0)$ by Lemma \ref{lem:gamma_in_Gamma_near},
there are neighborhoods of $\va_0$ and $\mu_0$
such that for all $\va$ and $\mu$ in the neighborhoods of $\va_0$ and $\mu_0$
respectively there is a contour $\gamma=\gamma(\va,\mu)\in\Gamma(\va,\mu)$.

${K}_j(\va,\mu)$ is analytic in $\va$ by the determinant structure and the integral entries (\ref{eq:K_function_def}),
where explicit dependence on $\va$ is only in the $R(z,\va)$ term which is analytic away from $z=\alpha_j$, $j=0,...,4N+1$.

The integrals in the last row of $\frac{\partial K}{\partial\mu}(\alpha_j,\va,\mu)$ in (\ref{eq:DK/Dmu_function_NLS})
involve the function $f_\mu$ given by (\ref{eq:f'_mu}) which is integrable near $z=\mm$ and \
hence $\frac{\partial K}{\partial\mu}(\alpha_j,\va,\mu)$ is continuous in $\mu$. Thus $K_j(\alpha_j,\va,\mu)$
is continuously differentiable in $\va$ and in $\mu$.

\end{proof}

By Lemma \ref{lem:smooth_F_mu} the modulation equations (\ref{eq:mod_K_def})
\begin{equation}
K_j(\va,\mu)=K(\alpha_j,\va,\mu)=0
\label{eq:modulation}
\end{equation}
are smooth in $\va$ and in the parameter $\mu$. Next we want to solve this system for $\va=\va(\mu)$
and conclude smoothness in $\mu$ by the implicit function theorem.

For the next lemma we need $K'(z,\va,\mu)=\frac{dK}{dz}(z,\va,\mu)$
\begin{equation}
K'(z,\va,\mu)=
\frac{1}{2\pi i}\left|\begin{array}{ccccccc}
\oint_{\ggh_{m,1}}\frac{d\zeta}{R(\zeta)} & \ldots & \oint_{\ggh_{m,1}}\frac{\zeta^{N-1}d\zeta}{R(\zeta)} &
\oint_{\ggh_{m,1}}\frac{d\zeta}{(\zeta-z)^2R(\zeta)} \\
\ldots & \ldots & \ldots & \ldots  \\
\oint_{\ggh_{m,N}}\frac{d\zeta}{R(\zeta)} & \ldots & \oint_{\ggh_{m,N}}\frac{\zeta^{N-1}d\zeta}{R(\zeta)} &
\oint_{\ggh_{m,N}}\frac{d\zeta}{(\zeta-z)^2R(\zeta)} \\
\oint_{\ggh_{c,1}}\frac{d\zeta}{R(\zeta)} & \ldots & \oint_{\ggh_{c,1}}\frac{\zeta^{N-1}d\zeta}{R(\zeta)} &
\oint_{\ggh_{c,1}}\frac{d\zeta}{(\zeta-z)^2R(\zeta)} \\
\ldots & \ldots & \ldots & \ldots  \\
\oint_{\ggh_{c,N}}\frac{d\zeta}{R(\zeta)} & \ldots & \oint_{\ggh_{c,N}}\frac{\zeta^{N-1}d\zeta}{R(\zeta)} &
\oint_{\ggh_{c,N}}\frac{d\zeta}{(\zeta-z)^2R(\zeta)} \\
\oint_{\ggh}\frac{f(\zeta)d\zeta}{R(\zeta)} & \ldots & \oint_{\ggh}\frac{\zeta^{N-1}f(\zeta)d\zeta}{R(\zeta)} &
\oint_{\ggh}\frac{f(\zeta)d\zeta}{(\zeta-z)^2R(\zeta)}
\end{array}\right|,
\label{eq:K'_function}
\end{equation}
where $z$ is inside of $\ggh(\mu)$ and inside of $\ggh_{m,j}$ and $\ggh_{c,j}$ or $\ggh_{c,j+1}$.

\begin{lemma}$ $\\
Let $f$ be given by (\ref{eq:f-function_NLS}) and a contour $\gamma_0\in\Gamma(\va_0,\mu_0)$,
where $\va_0$ and $\mu_0$ satisfy
\[
\vec{K}(\va_0,\mu_0)=\vec{0}.
\]
Assume that for $\va_0=\left\{\alpha^{0}_j\right\}_{j=0}^{4N+1}$,
$\lim_{z\to\alpha^0_j}K'(z,\va_0,\mu_0)\neq 0,$ $j=0, 1,\ldots, 4N+1$.

Then the modulation equations
\[
\vec{K}(\va,\mu)=\vec{0}
\]
can be uniquely solved for $\va=\va(\mu)$ which is continuously
differentiable for all $\mu$ in some open neighborhood of $\mu_0$ and $\va(\mu_0)=\va_0$.
\label{lem:smooth_alpha_mu}
\end{lemma}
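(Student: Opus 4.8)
The plan is to obtain $\va(\mu)$ from the implicit function theorem applied to the system $\vec K(\va,\mu)=\vec 0$ of \eqref{eq:mod_K_def} at the base point $(\va_0,\mu_0)$. Three ingredients are required: that the family of contours making $\vec K$ well defined persists nearby, that $\vec K$ is $C^1$, and that the branchpoint Jacobian $J=\bigl(\partial K_j/\partial\alpha_k\bigr)_{j,k=0}^{4N+1}$ is nonsingular at $(\va_0,\mu_0)$. The first two are in hand: Lemma \ref{lem:gamma_in_Gamma_near} produces a contour $\gamma(\va,\mu)\in\Gamma(\va,\mu)$ for all $(\va,\mu)$ near $(\va_0,\mu_0)$, and Lemma \ref{lem:smooth_F_mu} gives continuous differentiability of each $K_j$ in $\va$ and $\mu$; in fact, by the determinant structure of \eqref{eq:K_function_def}, each $K_j$ is holomorphic in the $4N+2$ complex variables $\va$. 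I would therefore apply the implicit function theorem, whose nondegeneracy condition is the nonsingularity of the complex Jacobian $J$ (this also forces nonsingularity of the underlying real Jacobian, whose determinant equals $|\det J|^2$ for a holomorphic map). Continuous dependence on the real parameter $\mu$ then follows, and since $f$ is analytic in $\mu$ by Lemma \ref{lem:f_analytic}, the dependence is in fact real-analytic. Thus the entire burden of the proof is the computation of $J$ at a solution.

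Writing $K_j(\va,\mu)=K(\alpha_j,\va,\mu)$ and applying the chain rule separates the explicit $z$-derivative from the derivative carried by the radical,
\begin{equation}
\frac{\partial K_j}{\partial\alpha_k}=\delta_{jk}\,\lim_{z\to\alpha_j}K'(z,\va,\mu)+\left.\frac{\partial K}{\partial\alpha_k}\right|_{z=\alpha_j},
\end{equation}
where $K'$ is \eqref{eq:K'_function} and the last term acts only through the dependence of $R(\zeta,\va)$ on $\va$. Since the loop contours $\ggh,\ggh_{m,i},\ggh_{c,i}$ are fixed and enclose no branchpoint, I may differentiate under the integral sign, using $\partial_{\alpha_k}R(\zeta,\va)^{-1}=\tfrac12(\zeta-\alpha_k)^{-1}R(\zeta,\va)^{-1}$; expanding the determinant row by row, each such term inserts a factor $\tfrac12(\zeta-\alpha_k)^{-1}$ into the integrands of a single row.

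The key claim is that at a solution of \eqref{eq:mod_K_def} the Jacobian $J$ is diagonal. For the off-diagonal entries $j\neq k$ I would use the partial-fraction identity $\frac{1}{(\zeta-\alpha_j)(\zeta-\alpha_k)}=\frac{1}{\alpha_j-\alpha_k}\bigl(\frac{1}{\zeta-\alpha_j}-\frac{1}{\zeta-\alpha_k}\bigr)$ in the resolvent column, which recombines the differentiated determinant into determinants of the form $K(\alpha_j)$ and $K(\alpha_k)$; these vanish by the modulation equations. The same cancellation must also dispose of the contributions produced by differentiating the period rows — this is forced by consistency with Theorem \ref{thm:TV_det_result}, whose decoupled formula for $(\alpha_j)_{\beta_k}$ can hold only if $J^{-1}$, hence $J$, is diagonal. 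The diagonal entry is then $J_{jj}=\frac{D}{2\pi i}\oint_{\ggh}\frac{f'(\zeta)}{(\zeta-\alpha_j)R(\zeta,\va)}\,d\zeta$, which by integration by parts together with the modulation equations is a nonzero constant multiple of $\lim_{z\to\alpha_j}K'(z,\va,\mu)$; indeed it is precisely the denominator appearing in Theorem \ref{thm:TV_det_result} (with $D$ from \eqref{eq:D_det_def}).

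Granting this, $\det J=\prod_j J_{jj}\neq0$ is equivalent to the hypothesis $\lim_{z\to\alpha_j^0}K'(z,\va_0,\mu_0)\neq0$ for every $j$, so the implicit function theorem yields a unique $C^1$ solution $\va(\mu)$ with $\va(\mu_0)=\va_0$. Finally, preservation of the Schwarz constraint $\alpha_{2i+1}(\mu)=\overline{\alpha_{2i}(\mu)}$ follows by a symmetry argument: since $\mu$ is real and $f$ is Schwarz-symmetric \eqref{eq:Schwarz_reflection}, the system $\vec K=\vec 0$ is invariant under complex conjugation combined with the index swap $2i\leftrightarrow 2i+1$; as $\va_0$ is invariant, the conjugate-reflected solution also solves the system and, by the local uniqueness from the implicit function theorem, coincides with $\va(\mu)$. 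The hard part will be the diagonality claim for general $N$: showing that the contributions from differentiating the period rows $\ggh_{m,i},\ggh_{c,i}$ cancel at a solution, and verifying the proportionality $J_{jj}\propto\lim_{z\to\alpha_j}K'$, both of which rest on the determinant identities underlying \eqref{eq:K_function_def}--\eqref{eq:K'_function} and Theorem \ref{thm:TV_det_result}.
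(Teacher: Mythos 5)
Your proposal follows the same route as the paper: verify $C^1$ smoothness of $\vec K$ via Lemma \ref{lem:smooth_F_mu} (together with Lemma \ref{lem:gamma_in_Gamma_near} for persistence of the contour), establish that the Jacobian $\partial\vec K/\partial\va$ is diagonal with $j$-th entry a nonzero multiple of $\lim_{z\to\alpha_j^0}K'(z,\va_0,\mu_0)$, and apply the implicit function theorem. The only difference is that the diagonality and the identity $\partial K(\alpha_j)/\partial\alpha_j=\tfrac{3}{2}\lim_{z\to\alpha_j}K'(z,\va,\mu)$ --- the ``hard part'' you defer to the end --- are not proved in the paper either but simply cited from \cite{TV_det}, so the burden you flag is already discharged in the literature, and your partial-fraction sketch is consistent with how it is done there.
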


\begin{proof}

$\vec{K}$ is continuously differentiable in $\va$ and in $\mu$ by Lemma \ref{lem:smooth_F_mu}.

As it was shown in \cite{TV_det}, the matrix
$\left\{\frac{\partial \vec{K}}{\partial\va}\right\}_{j,l}
=\left\{\frac{\partial K(\alpha_j)}{\partial\alpha_l}\right\}_{j,l}$
is diagonal and
\begin{equation}
\frac{\partial K(\alpha_j)}{\partial\alpha_j}=\frac{3}{2}D\lim_{z\to\alpha^0_j}\left(\frac{h(z)}{R(z)}\right)'=
\frac{3}{2}\lim_{z\to\alpha^0_j}K'(z,\va,\mu) \neq 0.
\end{equation}
So
\begin{equation}
\det\left|\frac{\partial \vec{K}}{\partial\vec\alpha}(\va_0)\right|=\prod_{j}\frac{\partial K(\alpha_j)}{\partial\alpha_j}\neq 0
\end{equation}
under the assumptions. By the implicit function theorem, $\va(\mu)$
are uniquely defined in some neighborhood of $\mu_0$ and smooth in $\mu$.
Note $\va(\mu_0)=\va_0$ by assumption.
\end{proof}

\begin{remark}$ $\\
The condition $\lim_{z\to\alpha^{0}_j}K'(z,\va_0,\mu_0)\neq 0,$ $j=0, 1,\ldots, 4N+1$ in Lemma
\ref{lem:smooth_alpha_mu} is equivalent to $\lim_{z\to\alpha^{0}_j}\frac{h'(z,\va_0,\mu_0)}{R(z,\va_0)}\neq 0,$ $j=0, 1,\ldots, 4N+1$.

\end{remark}

All quantities below depend on parameters $x$ and $t$. We assume that for the rest of the paper
$x$ and $t$ are fixed.

\begin{theorem} ($\mu$-perturbation in genus $N$)\\
Consider a finite length non-self-intersecting contour $\gamma_0$ in the complex plane consisting of a finite union of oriented arcs  $\gamma_0=\left(\cup\gg_{m,j}\right)\cup\left(\cup\gg_{c,j}\right)\in\Gamma(\va_0,\mu_0)$
 with the distinct arcs end points $\va_0$ and depending on parameter $\mu$ (see Figure \ref{fig:RHP_contours}). Assume $\va_0$
and $\mu_0$ satisfy a system of equations
\[
\vec{K}\left(\va_0,\mu_0\right)=\vec{0},
\label{eq:K_0}
\]
and $f$ is given by (\ref{eq:f-function_NLS}).
Let $\gamma=\gamma(\va,\mu)$ be a contour of a RH problem which seeks a function
$h(z)$ which satisfies the following conditions

\begin{equation}
\left\{
\begin{array}{l}
h_{+}(z)+h_{-}(z)=2W_j,\ \mbox{on} \ \gg_{m,j}, \ \ j=0,1,...,N,\\
h_{+}(z)-h_{-}(z)=2\Omega_j,\ \mbox{on} \ \gg_{c,j}, \ \ j=1,...,N,\\
h(z)+f(z)\ \mbox{is analytic in}\ \mathbb{\overline{C}}\backslash\gg,
\end{array}
\right. \label{eq:RHP_h_NLS}
\end{equation}
where $\Omega_j=\Omega_j(\va,\mu)$ and $W_j=W_j(\va,\mu)$ are real constants (with normalization $W_0=0$) whose numerical values will be determined from the RH conditions.
Assume that there is a function $h(z,\va_0,\mu_0)$ which satisfies (\ref{eq:RHP_h_NLS}) and suppose $\frac{h'(z,\va_0,\mu_0)}{R(z,\va_0)}\neq 0$
for all $z$ on $\gamma_0$.

Then there is a contour $\gg(\va,\mu)\in\Gamma(\va,\mu)$ such that the solution $\va=\va(\mu)$ of the system
\begin{equation}
\vec{K}\left(\va,\mu\right)=\vec{0}
\label{eq:vec_K_system_NLS}
\end{equation}
and $h(z,\va(\mu),\mu)$ which solves (\ref{eq:RHP_h_NLS}) are uniquely defined and continuously differentiable in $\mu$ in some
open neighborhood of $\mu_0$.

Moreover,
\begin{equation}
\frac{\partial\alpha_j}{\partial\mu}(\mu)=-\frac{2\pi i\ \frac{\partial K}{\partial\mu}(\alpha_j(\mu),\va(\mu),\mu)}{D(\va(\mu),\mu) \oint_{\ggh(\mu)}\frac{f'(\zeta,\mu)}{(\zeta-\alpha_j(\mu))R(\zeta,\va(\mu))}d\zeta},
\label{eq:alpha_mu_thm}
\end{equation}
\begin{equation}
\frac{\partial h}{\partial\mu}(z,\mu)=\frac{R(z,\va(\mu))}{2\pi i}\oint_{\ggh(\mu)}
\frac{\frac{\partial f}{\partial\mu}(\zeta,\mu)}{(\zeta-z)R(\zeta,\va(\mu))}d\zeta,
\label{eq:h_mu_thm}
\end{equation}
where $z$ is inside of $\ggh$.

Furthermore,
$\Omega_j(\mu)=\Omega_j(\va(\mu),\mu)$, and $W_j(\mu)=W_j(\va(\mu),\mu)$
are defined and continuously differentiable in $\mu$ in some open neighborhood of $\mu_0$, and
\begin{equation}
\begin{array}{cc}
\frac{\partial\Omega_j}{\partial\mu}(\mu)=\frac{-1}{D}
\left|\begin{array}{ccc}
\oint_{\ggh_{m,1}}\frac{d\zeta}{R(\zeta)} & \ldots & \oint_{\ggh_{m,1}}\frac{\zeta^{N-1}d\zeta}{R(\zeta)}  \\
\ldots & \ldots & \ldots  \\
\oint_{\ggh_{m,N}}\frac{d\zeta}{R(\zeta)} & \ldots & \oint_{\ggh_{m,N}}\frac{\zeta^{N-1}d\zeta}{R(\zeta)}   \\
\oint_{\ggh_{c,1}}\frac{d\zeta}{R(\zeta)} & \ldots & \oint_{\ggh_{c,1}}\frac{\zeta^{N-1}d\zeta}{R(\zeta)}  \\
\ldots & \ldots & \ldots  \\
\oint_{\ggh_{c,j-1}}\frac{d\zeta}{R(\zeta)} & \ldots & \oint_{\ggh_{c,j-1}}\frac{\zeta^{N-1}d\zeta}{R(\zeta)}  \\
{\oint_{\ggh}\frac{ f_{\mu}(\zeta)}{R(\zeta,\va)}d\zeta} & \ldots & {\oint_{\ggh}\frac{\zeta^{N-1} f_{\mu}(\zeta)}{R(\zeta,\va)}d\zeta} \\
\oint_{\ggh_{c,j+1}}\frac{d\zeta}{R(\zeta)} & \ldots & \oint_{\ggh_{c,j+1}}\frac{\zeta^{N-1}d\zeta}{R(\zeta)}  \\
\ldots & \ldots & \ldots \\
\oint_{\ggh_{c,N}}\frac{d\zeta}{R(\zeta)} & \ldots & \oint_{\ggh_{c,N}}\frac{\zeta^{N-1}d\zeta}{R(\zeta)}
\end{array}\right|, &
\frac{\partial W_j}{\partial\mu}(\mu)=\frac{-1}{D}
\left|\begin{array}{cccccc}
\oint_{\ggh_{m,1}}\frac{d\zeta}{R(\zeta)} & \ldots & \oint_{\ggh_{m,1}}\frac{\zeta^{N-1}d\zeta}{R(\zeta)}  \\
\ldots & \ldots & \ldots \\
\oint_{\ggh_{m,j-1}}\frac{d\zeta}{R(\zeta)} & \ldots & \oint_{\ggh_{m,j-1}}\frac{\zeta^{N-1}d\zeta}{R(\zeta)}  \\
{\oint_{\ggh}\frac{ f_{\mu}(\zeta)}{R(\zeta,\va)}d\zeta} & \ldots & {\oint_{\ggh}\frac{\zeta^{N-1} f_{\mu}(\zeta)}{R(\zeta,\va)}d\zeta} \\
\oint_{\ggh_{m,j+1}}\frac{d\zeta}{R(\zeta)} & \ldots & \oint_{\ggh_{m,j+1}}\frac{\zeta^{N-1}d\zeta}{R(\zeta)}  \\
\ldots & \ldots & \ldots  \\
\oint_{\ggh_{m,N}}\frac{d\zeta}{R(\zeta)} & \ldots & \oint_{\ggh_{m,N}}\frac{\zeta^{N-1}d\zeta}{R(\zeta)}  \\
\oint_{\ggh_{c,1}}\frac{d\zeta}{R(\zeta)} & \ldots & \oint_{\ggh_{c,1}}\frac{\zeta^{N-1}d\zeta}{R(\zeta)}  \\
\ldots & \ldots & \ldots  \\
\oint_{\ggh_{c,N}}\frac{d\zeta}{R(\zeta)} & \ldots & \oint_{\ggh_{c,N}}\frac{\zeta^{N-1}d\zeta}{R(\zeta)}
\end{array}\right|,
\end{array}
\label{eq:Omega_mu_thm}
\end{equation}
where $R(\zeta)=R(\zeta,\va)$, $f(\zeta)=f(\zeta,\mu)$, $f_\mu(\zeta)=\frac{\partial f}{\partial\mu}(\zeta,\mu)$, and
$D=D(\va(\mu))$.
\label{thm:perturb_NLS}
\end{theorem}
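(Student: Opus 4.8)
The plan is to harvest the regularity already packaged in Lemmas \ref{lem:gamma_in_Gamma_near}--\ref{lem:smooth_alpha_mu}, to derive the branch-point law (\ref{eq:alpha_mu_thm}) by implicit differentiation, and to obtain (\ref{eq:h_mu_thm})--(\ref{eq:Omega_mu_thm}) by identifying $\partial_\mu h$ as the solution of a scalar RHP of the same type with $f$ replaced by $f_\mu$. First I would settle existence, uniqueness and $C^1$-dependence. The hypothesis $\frac{h'(z,\va_0,\mu_0)}{R(z,\va_0)}\neq 0$ on $\gamma_0$ holds in particular at the endpoints $\alpha^0_j$, which by the remark following Lemma \ref{lem:smooth_alpha_mu} is equivalent to the nondegeneracy $\lim_{z\to\alpha^0_j}K'(z,\va_0,\mu_0)\neq 0$. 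Lemma \ref{lem:gamma_in_Gamma_near} then produces a contour $\gamma(\va,\mu)\in\Gamma(\va,\mu)$ for $(\va,\mu)$ near $(\va_0,\mu_0)$, Lemma \ref{lem:smooth_F_mu} gives $C^1$-dependence of $\vec K$ on $(\va,\mu)$, and Lemma \ref{lem:smooth_alpha_mu} solves $\vec K(\va,\mu)=\vec 0$ uniquely for a $C^1$ curve $\va=\va(\mu)$ with $\va(\mu_0)=\va_0$. Feeding $\va(\mu)$ into (\ref{eq:h(z)_def_loops}) (equivalently the determinant form (\ref{eq:h_and_K_def})) and invoking analyticity of $f,f'$ in $\mu$ (Lemma \ref{lem:f_analytic}), the solution $h(z,\va(\mu),\mu)$, and the constants $W_j,\Omega_j$ as boundary values of $h$, inherit $C^1$-dependence on $\mu$ on compacta away from $\gamma_0$.

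To obtain (\ref{eq:alpha_mu_thm}) I would differentiate the identity $K(\alpha_j(\mu),\va(\mu),\mu)=0$ totally in $\mu$. Since the Jacobian $\{\partial K_j/\partial\alpha_\ell\}$ is diagonal (shown in \cite{TV_det} and recalled in the proof of Lemma \ref{lem:smooth_alpha_mu}), only the diagonal term survives, giving $\frac{d\alpha_j}{d\mu}=-(\partial_\mu K)(\alpha_j)/(\partial_{\alpha_j}K_j)$. Comparing $\partial_{\alpha_j}K_j$ with the denominator furnished by Theorem \ref{thm:TV_det_result} identifies it as $\frac{D}{2\pi i}\oint_{\ggh}\frac{f'(\zeta)}{(\zeta-\alpha_j)R(\zeta)}\,d\zeta$, while the numerator $(\partial_\mu K)(\alpha_j)$ is exactly (\ref{eq:DK/Dmu_function_NLS}), produced by Lemma \ref{lem:par_int_zero_NLS}; together these give (\ref{eq:alpha_mu_thm}). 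This step is structurally the $\mu$-analogue of Theorem \ref{thm:TV_det_result}, the only new feature being the $\mu$-dependent contour, which Lemma \ref{lem:par_int_zero_NLS} neutralizes by letting $\partial_\mu$ fall on the integrands of the fixed loop integrals.

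For (\ref{eq:h_mu_thm}) and (\ref{eq:Omega_mu_thm}) I would argue by uniqueness for the scalar RHP. Differentiating the three defining properties in (\ref{eq:RHP_h_NLS}), the function $\partial_\mu h$ should be Schwarz-reflexive, should have $\partial_\mu h+f_\mu$ analytic in $\overline{\C}\setminus\gamma$ away from $z_0$, and should exhibit constant jumps across the arcs; together with boundedness at the (smoothly moving) branch points, which pins down the constants, this is a well-posed problem of type (\ref{eq:RHP_h_NLS}) with input $f_\mu$. I would then verify directly that $G(z):=\frac{R(z)}{2\pi i}\oint_{\ggh}\frac{f_\mu(\zeta)}{(\zeta-z)R(\zeta)}\,d\zeta$ meets the same conditions: analyticity of $G+f_\mu$ and the constancy of the arc-jumps both follow by Plemelj from the fact, recorded at the end of Section \ref{Setup}, that the jump of $f$ (hence of $f_\mu$) near $z_0$ is linear in $z$, while $G$ vanishes at the branch points because $R(z)\to 0$ there and the loop integral stays finite. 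Uniqueness of the bounded-at-branch-points solution (the difference of two solutions is $R$ times an entire function of controlled growth, bounded where $R$ vanishes, hence identically zero) then forces $\partial_\mu h=G$, which is (\ref{eq:h_mu_thm}). Reading off the constant arc-jumps of $G$ --- equivalently, differentiating the Cramer representation of $W_j,\Omega_j$, whose moment matrix is $\mu$-independent by (\ref{eq:Lemma3.2_int3-4})--(\ref{eq:Lemma3.2_int5-6}) while the $f$-row passes to the $f_\mu$-row by (\ref{eq:Lemma3.2_int1}) --- yields the determinant formulas (\ref{eq:Omega_mu_thm}).

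The hard part is the common thread of all three formulas: differentiation through the moving contour $\gamma(\va(\mu),\mu)$, whose branch points $\alpha_j(\mu)$ and logarithmic singularity $z_0=\mm$ both travel with $\mu$, so that the jump relations and the analyticity of $h+f$ live on a $\mu$-dependent support absent from the setting of \cite{TV_det}. The crux is to show that the effects of this motion --- the variation of $R(\zeta,\va)$ through $\dot{\va}$ in every integral, and the displacement of the endpoints in the jump conditions --- leave no residual terms, so that $\partial_\mu h$ collapses to the single clean integral in (\ref{eq:h_mu_thm}). The device that makes this succeed is the passage to the fixed loop contours $\ggh,\ggh_{m,j},\ggh_{c,j}$ of Definition \ref{def:gamma_hat} (so that $\ggh=\ggh(\mu)$ may be held independent of $\va$), combined with the finite-difference argument of Lemma \ref{lem:par_int_zero_NLS}, which shows the only surviving effect of varying $\mu$ is the integrand derivative $f\mapsto f_\mu$; the endpoint motion enters solely through the already-controlled map $\va(\mu)$, and the linearity of the jump of $f$ at $z_0$ guarantees that the pinch at the moving singularity contributes nothing singular.
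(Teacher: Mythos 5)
Your proposal follows essentially the same route as the paper: existence, uniqueness and $C^1$-dependence of $\va(\mu)$ from Lemmas \ref{lem:gamma_in_Gamma_near}--\ref{lem:smooth_alpha_mu}, the branch-point law by implicit differentiation using the diagonality of $\{\partial K(\alpha_j)/\partial\alpha_\ell\}$ from \cite{TV_det}, the formula for $\partial_\mu h$ by identifying it as the solution of the scalar RHP with input $f_\mu$, and the constants by differentiating the linear system through Lemma \ref{lem:par_int_zero_NLS} and applying Cramer's rule. The argument is correct and matches the paper's proof, with your uniqueness verification for $\partial_\mu h$ merely spelling out a step the paper states tersely.
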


\begin{proof}

By Lemma \ref{lem:smooth_alpha_mu} there is a contour $\gg(\va,\mu)\in\Gamma(\va,\mu)$ for all $\mu$ in some neighborhood of $\mu_0$ and $\alpha_j(\mu)$ are continuously differentiable in $\mu$.
Formula for $\frac{\partial\alpha_j}{\partial\mu}$ is derived similarly as in \cite{TV_det}.
We differentiate the modulation equations $K(\alpha_j)=K(\alpha_j,\va,\mu)=0$
which define $\va=\va(\mu)$ with respect to $\mu$
\begin{equation}
\sum_{l=0}^{4N+1}\frac{\partial K(\alpha_j)}{\partial\alpha_l}\frac{\partial\alpha_l}{\partial\mu}+\frac{\partial K}{\partial\mu}(\alpha_j)=0,
\end{equation}
where the matrix $\left\{\frac{\partial K(\alpha_j)}{\partial\alpha_l}\right\}_{j,l}$ is diagonal \cite{TV_det} so
\begin{equation}
\frac{\partial K(\alpha_j)}{\partial\alpha_j}\frac{\partial\alpha_j}{\partial\mu}=-\frac{\partial K(\alpha_j)}{\partial\mu}.
\end{equation}
Since
\begin{equation}
\frac{\partial K(\alpha_j)}{\partial\alpha_j}=\frac{D(\va,\mu)}{2\pi i}\oint_{\ggh(\mu)}\frac{f'(\zeta,\mu)}{(\zeta-\alpha_j)R(\zeta,\va)}d\zeta
\end{equation}
we arrive to the evolution equations for $\alpha_j$:
\begin{equation}
\frac{\partial\alpha_j}{\partial\mu}=-\frac{2\pi i \frac{\partial K}{\partial\mu}(\alpha_j)}{D(\va,\mu) \oint_{\ggh(\mu)}\frac{f'(\zeta,\mu)}{(\zeta-\alpha_j)R(\zeta,\va)}d\zeta},\ \ j=0,...,4N+1.
\label{eq_alpha_mu}
\end{equation}

Next we compute $\frac{\partial h}{\partial\mu}$ which satisfies the scalar RHP
\begin{equation}
\left\{
\begin{array}{l}
h_{\mu,+}(z)+h_{\mu,-}(z)=0,\ \ \ z\in \gamma_{m,j}, \ j=0,1,...,N,\\
h_{\mu}(z)+f_{\mu}(z)\ \mbox{is analytic in}\ \mathbb{\overline{C}}\backslash\gg,
\end{array}
\right.
\end{equation}
Then
\begin{equation}
\frac{\partial h}{\partial\mu}(z,\mu)=\frac{R(z,\va(\mu))}{2\pi i}\oint_{\ggh(\mu)}
\frac{\frac{\partial f}{\partial\mu}(\zeta,\mu)}{(\zeta-z)R(\zeta,\va(\mu))}d\zeta,
\end{equation}
where $z$ is inside of $\ggh$. The integrand $\frac{\partial f}{\partial\mu}(\zeta,\mu)$
behaves like $\log(\zeta-z_0)$ near $\zeta=z_0$, which is integrable.

Constants $W_j$ and $\Omega_j$ are found from the linear system \cite{TVZzero}
\begin{equation}
\oint_{\ggh(\mu)}\frac{\zeta^n f(\zeta,\mu)}{R(\zeta,\va)}d\zeta +
\sum_{j=1}^{N}\oint_{\ggh_{c,j}}\frac{\zeta^n \Omega_j}{R(\zeta,\va)}d\zeta+
\sum_{j=1}^{N}\oint_{\ggh_{m,j}}\frac{\zeta^n W_j}{R(\zeta,\va)}d\zeta =0,\ n=0,...N-1.
\end{equation}

Differentiating in $\mu$ and using Lemma \ref{lem:par_int_zero_NLS} lead to

\begin{equation}
\oint_{\ggh(\mu)}\frac{\zeta^n f_{\mu}(\zeta,\mu)}{R(\zeta,\va)}d\zeta +
\sum_{j=1}^{N}\oint_{\ggh_{c,j}}\frac{\zeta^n (\Omega_{j})_{\mu}}{R(\zeta,\va)}d\zeta+
\sum_{j=1}^{N}\oint_{\ggh_{m,j}}\frac{\zeta^n (W_{j})_{\mu}}{R(\zeta,\va)}d\zeta =0,\ n=0,...,N-1
\end{equation}
or in matrix form
\begin{equation}
\left(\begin{array}{cccccc}
\oint_{\ggh_{m,1}}\frac{d\zeta}{R(\zeta)} & \ldots & \oint_{\ggh_{m,1}}\frac{\zeta^{N-1}d\zeta}{R(\zeta)} \\
\ldots & \ldots & \ldots  \\
\oint_{\ggh_{m,N}}\frac{d\zeta}{R(\zeta)} & \ldots & \oint_{\ggh_{m,N}}\frac{\zeta^{N-1}d\zeta}{R(\zeta)}  \\
\oint_{\ggh_{c,1}}\frac{d\zeta}{R(\zeta)} & \ldots & \oint_{\ggh_{c,1}}\frac{\zeta^{N-1}d\zeta}{R(\zeta)}  \\
\ldots & \ldots & \ldots  \\
\oint_{\ggh_{c,N}}\frac{d\zeta}{R(\zeta)} & \ldots & \oint_{\ggh_{c,N}}\frac{\zeta^{N-1}d\zeta}{R(\zeta)}
\end{array}\right)^T
\left(\begin{array}{c}
\frac{\partial\vec W}{\partial\mu}\\
\frac{\partial\vec\Omega}{\partial\mu}
\end{array}\right)
=-\left(\begin{array}{c}
{\oint_{\ggh(\mu)}\frac{f_{\mu}(\zeta,\mu)}{R(\zeta,\va)}d\zeta}\\
\ldots\\
{\oint_{\ggh(\mu)}\frac{\zeta^{N-1} f_{\mu}(\zeta,\mu)}{R(\zeta,\va)}d\zeta}
\end{array}\right).
\end{equation}
So $\frac{\partial\Omega_j}{\partial\mu}$ and $\frac{\partial W_j}{\partial\mu}$ satisfy (\ref{eq:Omega_mu_thm}).
Note that $D\neq 0$ for distinct $\alpha_j$'s \cite{TV_det}.

\end{proof}

\begin{remark}
In \cite{TV_det}, there was considered the case when the contour $\gamma$ was
independent of external parameters $x$ and $t$ and simple linear dependence of $f$ on these parameters.
In this paper we apply the methods of \cite{TV_det} to
case of the dependence on parameter $\mu$ when the jump contour explicitly passes through $z=\mm$ a
point of singularity of $f$. Despite this more complicated dependence on $\mu$, the resulting formulae
are the same. The main reason is Lemma \ref{lem:par_int_zero_NLS}, which allows to find partial derivatives
with respect to $\mu$ of contour integrals involving dependence on $\mu$ in both integrands and contours
of integration.
\end{remark}

\begin{remark}
Theorem \ref{thm:perturb_NLS} guarantees that the
solution of the RHP (\ref{eq:RHP_h_NLS}) is uniquely continued
with respect to external parameters. Additional sign conditions on
$\Im h$ need to be satisfied, for $h$ to correspond to an
asymptotic solution of NLS as in \cite{TVZzero}. The sign
conditions have to be satisfied near $\gamma$ and additionally on
a semi-infinite complementary arcs connecting the arcs end points
of $\gamma$ to $\infty$.
\end{remark}

\subsection{Sign conditions and preservation of genus}

If the scalar RHP (\ref{eq:RHP_h_NLS_def}) is implemented in the asymptotic solution of the
semiclassical NLS, certain sign conditions must be satisfied.
Specifically, $\Im h(z)=0$ on $\gg_{m,j}$, $\Im h(z)<0$ on both sides of $\gg_{m,j}$,
and $\Im h(z)\ge 0$ on $\gg_{c,j}$ (see definition 3.11 below). In this section we
investigate the preservation of the sign structure of $\Im h$ under perturbations of $\mu$.

\begin{definition}
Define $\gamma^{\infty}=\gamma^{\infty}(\va,\mu)$ as an extension
of a contour $\gamma(\va,\mu)\in\Gamma(\va,\mu)$ as
$\gamma^{\infty}(\va,\mu)=(\infty,\alpha_{4N+1}]\cup\gamma(\va,\mu)\cup[\alpha_{4N},\infty)$.
Both additional arcs are considered as a complementary arc
$\gg_{c,N+1}=(\infty,\alpha_{4N+1}]\cup[\alpha_{4N},\infty)$
and assume $\gg_{c,N+1}=\overline{\gg_{c,N+1}}$, so
$\gamma^\infty=\overline{\gamma^\infty}$. With a slight abuse of
notation we write $\gamma^\infty(\va,\mu)\in\Gamma(\va,\mu)$.
\end{definition}

\begin{lemma}
If the conditions of Theorem \ref{thm:perturb_NLS} holds on
$\gamma^\infty(\va_0,\mu_0)\in\Gamma(\va_0,\mu_0)$ for
$\vec{K}(\va_0,\mu_0)=\vec{0}$ then the statement of the theorem
holds on $\gamma^\infty(\va,\mu)$, where
$\vec{K}(\va,\mu)=\vec{0}$. \label{thm:perturb_infty}
\end{lemma}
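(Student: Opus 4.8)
The plan is to re-run the proof of Theorem~\ref{thm:perturb_NLS} essentially verbatim on the extended contour $\gamma^\infty$, the guiding point being that attaching the two semi-infinite rays $\gg_{c,N+1}=(\infty,\alpha_{4N+1}]\cup[\alpha_{4N},\infty)$ changes nothing structural: the rays join the already-present endpoints $\alpha_{4N},\alpha_{4N+1}$ to infinity, introduce no new branchpoint, and hence leave the radical $R(\zeta,\va)$, the genus, and the modulation system $\vec K(\va,\mu)=\vec 0$ exactly as in the finite case. Moreover the logarithmic singularity $z_0=\mm$ stays on the finite main arc $\gg_{m,0}$, far from $\gg_{c,N+1}$. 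Thus Lemma~\ref{lem:gamma_in_Gamma_near} furnishes a nearby finite contour to which the two rays are appended without obstruction, and Lemma~\ref{lem:smooth_alpha_mu} applies unchanged: the hypothesis $h'(z,\va_0,\mu_0)/R(z,\va_0)\neq0$, now imposed on all of $\gamma^\infty$, supplies the nondegeneracy $\lim_{z\to\alpha_j^0}K'\neq0$, so the implicit function theorem produces the same continuously differentiable branch $\va(\mu)$ with $\va(\mu_0)=\va_0$ and formula~\eqref{eq:alpha_mu_thm} follows exactly as before.

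Next I would carry the bookkeeping of the RHP~\eqref{eq:RHP_h_NLS} over to $\gamma^\infty$: there is now one extra complementary constant $\Omega_{N+1}$ and one extra loop $\ggh_{c,N+1}$ encircling the two rays. I would extend Lemma~\ref{lem:par_int_zero_NLS} to $\ggh_{c,N+1}$ and to the enlarged outer loop $\ggh$. Because $z_0=\mm\notin\gg_{c,N+1}$, the contour $\ggh_{c,N+1}$ may be held fixed as $\mu$ varies, so the finite-difference step of that lemma degenerates to ordinary differentiation under the integral sign, and the moment integrals $\oint_{\ggh_{c,N+1}}\zeta^n\,d\zeta/R$ are simply $\mu$-independent, as in~\eqref{eq:Lemma3.2_int5-6}. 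With this in place, the derivations of~\eqref{eq:h_mu_thm} and~\eqref{eq:Omega_mu_thm} proceed as in Theorem~\ref{thm:perturb_NLS}: solve the scalar RHP for $h_\mu$ to get~\eqref{eq:h_mu_thm}, and differentiate the linear system for $\vec W,\vec\Omega$ — now enlarged by the one equation and one unknown attached to $\Omega_{N+1}$ — to obtain~\eqref{eq:Omega_mu_thm} and the smoothness of $\Omega_{N+1}(\mu)$, the new arc entering the determinants in the same pattern as the existing complementary arcs, with $D\neq0$ persisting for distinct $\alpha_j$.

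The one place where finiteness of the arcs was genuinely used, and hence the main obstacle, is convergence at infinity together with the legitimacy of differentiating the now-improper contour integrals. I would settle this with a decay estimate. Since $R(\zeta,\va)\sim\zeta^{2N+1}$, one has $R^{-1}=O(\zeta^{-(2N+1)})$, so the $\mu$-independent kernels $\zeta^n/R$ on $\ggh_{c,N+1}$ are $O(\zeta^{n-2N-1})$ and absolutely integrable for the relevant $n$. The function $f$ itself enters only through the outer loop $\ggh$, which I would let hug the logarithmic branchcut of $f$ along the rays: the single-valued polynomial part $-x\zeta-2t\zeta^2$ of $f$ takes equal values on the two sides and cancels in $f_+-f_-$, so the contribution surviving at infinity is driven only by the logarithmic jump of $f$, which is linear in $\zeta$ and, after division by $R$, decays like $\zeta^{-2N}$. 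The same bound applies with $f_\mu(\zeta,\mu)=O(\ln\zeta)$, the latter because in~\eqref{eq:f'_mu} the difference $\ln(\zeta+T)-\ln(\zeta-T)\to0$ and $\tanh^{-1}(2T/\mu)$ is a $\zeta$-independent constant. Consequently every integrand of interest decays at least like $\zeta^{-2N}\ln\zeta$, and a dominated-convergence argument justifies passing $\partial_\mu$ through each integral.

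Putting these together completes the extension of Lemma~\ref{lem:par_int_zero_NLS}, and therefore of all of Theorem~\ref{thm:perturb_NLS}, to $\gamma^\infty(\va,\mu)$ for $\mu$ near $\mu_0$ with $\vec K(\va,\mu)=\vec 0$. I would emphasize that the sign-condition structure on the semi-infinite arcs plays no role in this lemma — only the continuously differentiable continuation of $\va,h,\vec W,\vec\Omega$ is asserted — so that the decay and interchange estimate at infinity is the sole new technical content, everything else being the finite-contour argument carried along unchanged.
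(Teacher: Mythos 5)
Your overall reading of the lemma is right -- the added rays attach to existing endpoints, create no new branchpoints, and leave $R$, the genus and the system $\vec K(\va,\mu)=\vec 0$ untouched, so the finite-contour argument is meant to carry over. But the paper's proof is essentially one line, and it rests on a fact you miss: the jump constant on the new complementary arc is \emph{normalized to zero}, $\Omega_{N+1}=0$ (as in \cite{TVZzero}), and $f$ is analytic in a neighborhood of the two semi-infinite rays. With $\Omega_{N+1}=0$ there is no new unknown, no new loop $\ggh_{c,N+1}$ entering the representation \eqref{eq:h(z)_def_loops} or the determinants, and hence no improper contour integrals to worry about -- the statement of Theorem \ref{thm:perturb_NLS} holds verbatim because nothing in its proof changes.

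This is where your second paragraph goes wrong: you propose to treat $\Omega_{N+1}$ as a genuine unknown and to enlarge the moment system by ``one equation and one unknown.'' No such extra equation is available -- the number of moment conditions $n=0,\dots,N-1$ is fixed by the degree of $R(z)\sim z^{2N+1}$ and the required behavior of $h$ at infinity, and does not increase when you append the rays; indeed it is precisely that normalization at infinity which forces $\Omega_{N+1}=0$ rather than leaving it free. Treating it as an unknown would leave the system underdetermined and would change the determinant formulas \eqref{eq:Omega_mu_thm} that the lemma asserts continue to hold unchanged. Your third paragraph's decay estimates at infinity are not incorrect, but they address a difficulty that does not arise once $\Omega_{N+1}=0$ is imposed: all loop integrals remain over compact contours around the finite part of $\gamma$, and the only role of the rays is that $f$ is analytic near them, which is the single observation the paper's proof actually uses.
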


\begin{proof}$ $\\
The proof is unchanged since $f$ is analytic near the additional
semi-infinite arcs in $\gg_{c,N+1}$ and the jump condition for on
the additional complementary arc $\gg_{c,N+1}$ is taken to be zero
($\Omega_{N+1}=0$) \cite{TVZzero}.
\end{proof}

Note that the conditions in Lemma \ref{thm:perturb_infty} are
more restrictive since $\gamma\subset\gamma^\infty$.

\begin{definition} \label{def:sign_conditions}
A function $h$ satisfies sign conditions on $\gamma^\infty$ if
$\Im h(z)=0$ if $z\in\gg_{m,j}$, $\Im h(z)<0$ on both sides of
$\gamma_{m,j}$ for all $j=0,...,N$, and $\Im h(z)\ge 0$ if
$z\in\gg_{c,j}$ for all $j=1,...,N+1$. Denote $h\in
SC(\gamma^\infty)$.
\end{definition}

Note that the zero sign conditions ($\Im h(z)=0$) on $\gg_{m,j}$
are satisfied automatically through the construction of $h(z)$ by
(\ref{eq:h(z)_def_loops}) in the case of $h$ solving a RHP
(\ref{eq:RHP_h_NLS}). We only need to check preservation of
negative signs of $\Im h$ on both sides of the main arcs
$\gamma_{m,j}$ and the nonnegativity of $\Im h$ on the
complementary arcs $\gamma_{c,j}$, especially on the semi-infinite
arcs $(\infty,\alpha_{4N+1}]$ and $[\alpha_{4N},\infty)$.

\begin{remark}
Introducing the sign conditions in definition \ref{def:sign_conditions} requires to revisit
Lemma \ref{lem:gamma_in_Gamma_near} since the main arcs $\gg_{m,j}$ are now rigid (non-deformable like the complementary arcs)
due to the requirement for $\Im h$ to be negative on both sides of $\gamma(\va(\mu),\mu)$.
It has been established in \cite{KMM_book_03}
(see Lemma 5.2.1) and in \cite{TVZ2} (see Theorem 3.2), all the contours persist under deformations
of parameters $x$ and $t$ provided all sign inequalities are satisfied. This can be adapted to
the deformations of $\mu$.
The danger of a main arcs splitting into several disconnected branches as we perturb $\mu$ is adverted by the fact that
in the limit as $\mu\to\mu_0$ nonlinear local behavior would be produced near the main arcs while the condition $h'/R \neq 0$ on $\gamma$ implies linear local
behavior. Thus Lemma \ref{lem:gamma_in_Gamma_near} is valid even with the added new sign conditions.
Thus we only need to show that the sign conditions are satisfied.
\end{remark}

\begin{theorem}$ $\\
Let $f$ be defined by (\ref{eq:f-function_NLS}). Let
$\vec{K}(\va_0,\mu_0)=\vec{0}$,
$\gamma_0^\infty\in\Gamma(\va_0,\mu_0)$ and assume $h$ solves
$RHP(\gamma_0^\infty,\va_0,\mu_0,f)$ with
$\frac{h'(z,\mu_0)}{R(z,\mu_0)}\neq 0$ for all
$z\in\gamma_0^{\infty}$, and $h\in SC(\gamma_0^\infty)$.

Then there is an open neighborhood of $\mu_0$ where for all $\mu$, there is an $h$ which solves $RHP(\gamma^\infty,\va,\mu,f)$
with $\gamma^\infty=\gamma^\infty(\va,\mu)$, $\vec{K}(\va,\mu)=\vec{0}$,
$\frac{h'(z,\mu)}{R(z,\mu)}\neq 0$ for all $z\in\gamma^{\infty}$,
and $h\in SC(\gamma^\infty)$.
\label{thm:signs_NLS}
\end{theorem}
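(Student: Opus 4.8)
The plan is to treat this as a continuation result layered on top of Theorem~\ref{thm:perturb_NLS}: the existence, uniqueness, and $C^1$ dependence on $\mu$ of the branchpoints and of $h$ are already in hand, so the only genuinely new content is the persistence of the inequalities in Definition~\ref{def:sign_conditions}. First I would apply Lemma~\ref{thm:perturb_infty}, the extension of Theorem~\ref{thm:perturb_NLS} to the semi-infinite contour, to produce for every $\mu$ near $\mu_0$ the branchpoints $\va(\mu)$ solving $\vec K(\va,\mu)=\vec 0$ with $\va(\mu_0)=\va_0$, a contour $\gamma^\infty(\va(\mu),\mu)\in\Gamma(\va(\mu),\mu)$, and a solution $h(z,\mu)$ of $RHP(\gamma^\infty,\va,\mu,f)$, all continuously differentiable in $\mu$. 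The preceding remark already records that the hypothesis $h'/R\neq 0$ forces linear rather than nonlinear local behaviour at the branchpoints, so the main arcs stay simple and cannot split; by joint continuity of $h'(z,\mu)/R(z,\mu)$ this nonvanishing is inherited for all finite $z\in\gamma^\infty$ and all $\mu$ in a smaller neighbourhood of $\mu_0$ (on compacta directly, and on the semi-infinite tails from the $\mu$-independent leading behaviour discussed below). As that remark states, it then remains only to verify $h(\cdot,\mu)\in SC(\gamma^\infty(\va(\mu),\mu))$.

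Since $h$ is Schwarz-reflexive it suffices to check the sign conditions in the closed upper half-plane, the lower half-plane following by reflection. The equality $\Im h=0$ on each main arc $\gg_{m,j}$ is automatic from the representation~(\ref{eq:h(z)_def_loops}), as noted after Definition~\ref{def:sign_conditions}. Away from the branchpoints and from infinity, the remaining conditions are the strict inequalities $\Im h<0$ on the two sides flanking the main arcs and $\Im h>0$ in the interiors of the complementary arcs; on any compact subset of $\gamma^\infty$ bounded away from $\va(\mu)$ these hold strictly at $\mu_0$ and, because $\Im h(z,\mu)$ is jointly continuous, persist for $\mu$ near $\mu_0$ by uniform continuity. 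Thus the verification localises to two places: the branchpoints and the semi-infinite arc $\gg_{c,N+1}$.

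At a branchpoint, combining $K(\alpha_j)=0$ with~(\ref{eq:h_and_K_def}) gives the local model $h(z,\mu)=c_j(\mu)\,(z-\alpha_j(\mu))^{3/2}\bigl(1+o(1)\bigr)$, where $c_j(\mu)$ is proportional to $K'(\alpha_j)/D$ and is nonzero and continuous precisely because $h'/R\neq 0$. The zero set of $\Im[(z-\alpha_j)^{3/2}]$ is a triple of rays meeting at $2\pi/3$ angles, and the two sectors flanking the main-arc direction carry one sign of $\Im h$ while the complementary sector carries the opposite; the configuration realising the sign conditions (flanking sectors negative, complementary sector positive) is rigid and, as $\alpha_j(\mu)$ and $c_j(\mu)$ move continuously with $c_j\neq 0$, is preserved, so that the complementary arc may be routed into the positive sector. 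On the semi-infinite arc I would instead use that analyticity of $h+f$ at infinity together with~(\ref{eq:f-function_NLS}) fixes the leading growth $h(z,\mu)\sim 2tz^2$ as $z\to\infty$, whence $\Im h\sim 4t\,\Re z\,\Im z$; the dominant coefficient is $\mu$-independent and $t>0$, so the sign of $\Im h$ out along $\gg_{c,N+1}$ is fixed uniformly in $\mu$ and agrees with its value at $\mu_0$, giving $\Im h\ge 0$ to infinity (and, for finite $z$, $h'/R\neq 0$).

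The main obstacle is precisely this outermost layer: the condition $\Im h\ge 0$ on the complementary arcs is non-strict, degenerating to equality exactly at the branchpoints and requiring control on an unbounded arc. Reconciling the interior strict positivity with the vanishing at the endpoints is what forces the use of the $3/2$-power local model, whose angular sign pattern is robust only because the nondegeneracy hypothesis keeps $c_j$ off zero; and the unboundedness is tamed only because the leading $-2tz^2$ term of $f$ is independent of $\mu$, so no single estimate must be uniform in both $\mu$ and $z\to\infty$ beyond this fixed leading behaviour. Once the finitely many local contributions near the $\alpha_j$, the compact estimate, and the estimate at infinity are collected on a common neighbourhood of $\mu_0$, all conditions of Definition~\ref{def:sign_conditions} hold, and together with the first paragraph this completes the proof.
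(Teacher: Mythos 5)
Your overall architecture matches the paper's: invoke Lemma \ref{thm:perturb_infty} for existence and $C^1$ continuation, get $h'/R\neq 0$ and the strict sign inequalities on compact pieces by joint continuity, and then isolate the two delicate places (branchpoints and the semi-infinite arcs). The branchpoint discussion via the local model $h\sim c_j(z-\alpha_j)^{3/2}$ with $c_j\neq 0$ is a legitimate (and somewhat more explicit) version of what the paper compresses into ``harmonicity of $\Im h$ combined with $h'/R\neq 0$.'' However, your treatment of the semi-infinite arc $\gg_{c,N+1}$ contains a genuine error.

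You claim that $h\sim 2tz^2$ at infinity forces $\Im h\sim 4t\,\Re z\,\Im z\ge 0$ along $\gg_{c,N+1}$, with a $\mu$-independent dominant coefficient. This does not work. The semi-infinite complementary arcs run to $-\infty$ essentially along the real axis (the paper pushes them onto $(-\infty\pm i0,-\mm\pm i0)$), and there the quadratic term $2tz^2$ is real: it contributes nothing to $\Im h$ in the limit $\Im z\to 0$, and for the upper arc ($\Re z<0$, $\Im z>0$) the quantity $4t\,\Re z\,\Im z$ is actually \emph{negative}, so your asymptotic, taken at face value, would contradict the sign condition rather than establish it. The correct mechanism is entirely different: the positivity of $\Im h$ on the tails comes from the \emph{logarithmic} terms of $f$, whose boundary values on the real axis produce the piecewise-linear function $\Im f(z+i0)=\frac{\pi}{2}\left(\mm-|z|\right)$ for $z<\mm$ (equation (\ref{eq:Im_f_on_real_axis})); since $g$ is real on the real axis one gets $\Im h(z)=\frac{\pi}{2}\left(|z|-\frac{\mu}{2}\right)>0$ for $|z|>\mm$, which grows linearly and is manifestly controlled uniformly in $\mu$ near $\mu_0$. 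The same formula (via $\Im f'=\pm\pi/2$) is what the paper uses to get $h'/R\neq 0$ on the unbounded portion, where your appeal to ``$\mu$-independent leading behaviour'' again leans on the quadratic term rather than on the relevant linear imaginary part. Your argument therefore has a gap precisely at the step you yourself identify as the main obstacle; it is repaired by replacing the $2tz^2$ asymptotics with the explicit piecewise-linear formulas for $\Im f$ and $\Im f'$ on the real axis.
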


\begin{proof}$ $\\
Take any $\mu$ in a small enough open neighborhood of $\mu_0$.
There are two things we need to prove in addition to Lemma
\ref{thm:perturb_infty}: $\frac{h'(z,\mu)}{R(z,\va(\mu))}\neq 0$ on $\gamma^{\infty}$
and the sign conditions of $\Im h$ on $\gamma^{\infty}$.

Assume $\frac{h'(z,\mu_0)}{R(z,\va(\mu_0))}\neq 0$ on $\gamma_0^\infty$. Then there is a constant $C>0$
such that $\left|\frac{h'(z,\mu_0)}{R(z,\va(\mu_0))}\right|>C$ for all $z\in\gamma_0$.
Consider the solution $h(z,\mu)$ of $RHP(\gamma^\infty,\va,\mu)$, where $\vec{K}(\va,\mu)=\vec{0}$.
By Theorem \ref{thm:perturb_NLS} and Lemma \ref{thm:perturb_infty} such function exists and continuously differentiable in $\mu$.
Moreover, $h'(z,\mu)$ is continuous in $\mu$. Since $\gamma$ is a compact set in $\mathbb{C}$ and $\frac{h'(z,\mu)}{R(z,\va(\mu))}$ is continuous in $z$
and $\mu$, we have $\frac{h'(z,\mu)}{R(z,\va(\mu))}\neq 0$ for all $z\in\gamma$.

To show that $\frac{h'(z,\mu)}{R(z,\va(\mu))}\neq 0$ holds on $\gg^\infty$, we will now make use of the following properties of $f(z)$. On the real axis (in the non-tangential limit from the upper half plane),
\[
\Im f(z+i0)=\lim_{\delta\to0^+} f(z+i\delta) ,\quad\quad z\in\mathbb{R},
\]
is a piecewise linear function \cite{TVZzero}

\begin{equation}
\Im f(z+i0)= \left\{
\begin{array}{l}
\frac{\pi}{2} \left(\mm-|z|\right),\quad\quad z<\mm\\
\frac{\pi}{2} \left(z-\mm\right),\quad\quad z\ge\mm
\end{array}\right. \label{eq:Im_f_on_real_axis}
\end{equation}
and since $g(z)$ is real on the real axis, $\Im h(z+i0)=-\Im f(z+i0)$.
It is important for us that $|\Im h(z)|$ can be bounded away from zero
as $z\to\infty$.

Similarly,
\[
\Im f'(z+i0)= \left\{
\begin{array}{l}
\frac{\pi}{2} \quad\quad z\le0, \\
-\frac{\pi}{2} \quad\quad 0<z\le\mm, \\
\frac{\pi}{2} \quad\quad z>\mm,
\end{array}\right.
\]
and since $g'(z)$ is real on the real axis, $\Im h'(z+i0)=-\Im f'(z+i0)$.

Recall $\gamma^{\infty}=(\infty,\alpha_{4N+1}]\cup\ \gamma\ \cup[\alpha_{4N},\infty)$.
The semi-infinite arcs
$(\infty,\alpha_{4N+1}]$ and $[\alpha_{4N},\infty)$ can be pushed
to the real axis as
$\left(-\infty-i0,-\mm-i0\right)\cup\left[-\mm-i0,\alpha_{4N+1}\right]$
and
$\left[\alpha_{4N},-\mm+i0\right]\cup\left(-\mm+i0,-\infty+i0\right)$
respectively.

On $\left[-\mm-i0,\alpha_{4N+1}\right]$ and
$\left[\alpha_{4N},-\mm+i0\right]$,
$\frac{h'(z,\mu)}{R(z,\va(\mu))}\neq 0$ by continuity on a compact
set. Finally, for all $z\in\left(-\mm+i0,-\infty+i0\right)$, $\Im
h'(z,\mu)=-\frac{\pi}{2}$ and $R(z,\va)\in\mathbb{R}$. So
$\frac{h'(z,\mu)}{R(z,\va(\mu))}\neq 0$ for all
$z\in\left(-\mm+i0,-\infty+i0\right)$. The interval
$\left(-\infty-i0,-\mm-i0\right)$ is done similarly. So
$\frac{h'(z,\mu)}{R(z,\va(\mu))}\neq 0$ for all
$z\in\gamma^\infty$, for any $\mu$ in the neighborhood of
$\mu_0$.

Let $h\in SC(\gamma_0^\infty)$. Then $h\in SC(\gamma(\mu))$ by
continuity of $h$ in $z$ and $\mu$, compactness of $\gamma$, and
harmonicity of $\Im h$ combined with $\frac{h'(z,\mu)}{R(z,\va(\mu))}\neq 0$ for all
$z\in\gamma^\infty$ which guarantees that the (negative) signs near the main arcs $\ggh_{m,j}$ are preserved.
On the semi-infinite arcs $\left(-\infty-i0,-\mm-i0\right)$ and
$\left(-\mm+i0,-\infty+i0\right)$, $\Im h(z)=\frac{\pi}{2}\left(|z|-\frac{\mu}{2}\right)$ is positive,
 and $\left[\alpha_{4N},-\mm\right]$ and $\left[-\mm,\alpha_{4N+1}\right]$
are compact. So $\Im h \ge 0$ on $\gamma^{\infty}(\mu)$, that is $h\in SC(\gamma^\infty(\mu))$.

\end{proof}

\begin{definition}$ $\\
We define the (finite) genus $G=G(\mu)$ of the asymptotic solution of the semiclassical one
dimensional focusing NLS with the initial condition defined through $f(z,\mu)$, as (finite)
$N\in\mathbb{N}$ if there exists an asymptotic solution of NLS through the solution $h(z,\mu)$
of $RHP(\gamma^\infty,\va,\mu,f)$ with $\va=(\alpha_0,\alpha_1,...,\alpha_{4N+1})$,
such that $\frac{h'(z,\mu)}{R(z)}\neq 0$ for all $z\in\gamma^{\infty}$,
and the signs conditions of $h$ on $\gamma^\infty$ are satisfied: $h\in SC(\gamma^\infty)$.
\label{def:genus_NLS}
\end{definition}

\begin{remark}
This definition of the genus of the asymptotic solution coincides with
the genus of the (limiting) hyperelliptic Riemann surface of $R(z)$.
\end{remark}

\begin{theorem}(Preservation of genus)$ $\\
Suppose for $\mu_0$, the genus of the asymptotic solution of NLS with initial
condition defined through $f(z,\mu_0)$ in (\ref{eq:f-function_NLS}), is $G(\mu_0)$.

Then there is an open neighborhood of $\mu_0$ such that, for all $\mu$ in the neighborhood of $\mu_0$,
the genus of the asymptotic solution of NLS with initial condition defined through $f(z,\mu)$, is
preserved $G(\mu)=G(\mu_0)$.
\label{thm:pres_genus_NLS}
\end{theorem}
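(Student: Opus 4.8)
The plan is to read off Theorem \ref{thm:pres_genus_NLS} as an essentially immediate corollary of Theorem \ref{thm:signs_NLS}, once Definition \ref{def:genus_NLS} is unpacked. Write $N:=G(\mu_0)$. By Definition \ref{def:genus_NLS} the hypothesis furnishes an asymptotic solution realized by a function $h(z,\mu_0)$ solving $RHP(\gamma_0^\infty,\va_0,\mu_0,f)$ with $\va_0=(\alpha_0,\dots,\alpha_{4N+1})$, satisfying the nondegeneracy $h'(z,\mu_0)/R(z)\neq 0$ on all of $\gamma_0^\infty$ and the sign conditions $h\in SC(\gamma_0^\infty)$. Since $h$ solves the RHP, the determinant representation (\ref{eq:h_and_K_def}) forces $h$ to be bounded at each branchpoint, which is exactly the modulation system $\vec K(\va_0,\mu_0)=\vec 0$. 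The observation driving the proof is that these are \emph{precisely} the hypotheses of Theorem \ref{thm:signs_NLS}.

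First I would invoke Theorem \ref{thm:signs_NLS} verbatim. It yields an open neighborhood of $\mu_0$ in which, for every $\mu$, there is a function $h(z,\mu)$ solving $RHP(\gamma^\infty(\va,\mu),\va,\mu,f)$ with $\vec K(\va,\mu)=\vec 0$, with the \emph{same} count $4N+2$ of branchpoints $\va=\va(\mu)$, with $h'(z,\mu)/R(z)\neq 0$ on $\gamma^\infty$, and with $h\in SC(\gamma^\infty)$. By Definition \ref{def:genus_NLS} this function realizes a genus-$N$ asymptotic solution of NLS for the perturbed input $f(z,\mu)$, so a genus-$N$ asymptotic solution exists throughout a neighborhood of $\mu_0$.

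It then remains to argue that this genus-$N$ configuration is \emph{the} asymptotic solution, so that $G(\mu)=N=G(\mu_0)$ with no genus change hidden elsewhere. The asymptotic solution at fixed $(x,t)$ is unique (being the leading order of the unique $q(x,t,\eps)$), hence its genus is a well-defined integer, and it suffices to exclude the two genus-changing mechanisms. A decrease would require two branchpoints to coalesce, producing nonlinear (non-square-root) local behavior and forcing $h'/R$ to vanish or blow up there; this is ruled out by the preserved condition $h'(z,\mu)/R(z)\neq 0$ on $\gamma^\infty$. An increase would require the nucleation of a new main arc --- the birth of a band inside a complementary arc, or the splitting of an existing main arc into disconnected branches --- which would manifest as a degeneration of the strict negativity of $\Im h$ flanking the main-arc system. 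This is exactly the scenario averted by the remark preceding Theorem \ref{thm:signs_NLS}: rigidity of the main arcs together with $h'/R\neq 0$ forces linear local behavior on $\gamma$ and forbids such a split within the neighborhood. Since both mechanisms are excluded for all $\mu$ near $\mu_0$, the genus-$N$ solution produced above coincides with the true asymptotic solution, giving $G(\mu)=G(\mu_0)$.

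The main obstacle is conceptual rather than computational: essentially all of the analytic labor (smooth continuation of $\va(\mu)$, of $h$, and of the contour, and persistence of the sign structure) has already been carried out in Theorems \ref{thm:perturb_NLS} and \ref{thm:signs_NLS}. The only genuinely new point to nail down is the equivalence between ``preservation of the nondegeneracy and sign conditions'' and ``impossibility of a genus jump'', and in particular the careful treatment of the non-strict inequality $\Im h\ge 0$ on the complementary arcs, where a marginal interior zero could in principle signal the onset of a new phase. I would address this by appealing to the strictness built into $h'/R\neq 0$ on $\gamma^\infty$, which keeps such zeros isolated and prevents them from opening into a new main arc within the neighborhood.
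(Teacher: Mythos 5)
Your proposal is correct and takes essentially the same route as the paper, whose entire proof is the one-line statement that the result follows from Theorem \ref{thm:signs_NLS} together with Definition \ref{def:genus_NLS}. Your additional discussion of why no genus jump can hide elsewhere (uniqueness of the asymptotic solution, exclusion of branchpoint coalescence and band nucleation) is extra care the paper does not spell out --- it effectively sidesteps the issue by defining $G(\mu)$ through the existence of a valid genus-$N$ configuration --- but it does not change the argument.
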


\begin{proof}

Follows from Theorem \ref{thm:signs_NLS} and Definition \ref{def:genus_NLS}.

\end{proof}

\begin{corollary}
Fix $x$ and $t>t_0$, where $t_0(x)$ is the time of the first break in the asymptotic solution.
Then in some open neighborhood of $\mu=2$ the genus of the solution is 2.
\end{corollary}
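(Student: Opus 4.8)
The plan is to read the corollary as an immediate specialization of Theorem \ref{thm:pres_genus_NLS} (preservation of genus) to the base point $\mu_0 = 2$, once the genus at that value has been pinned down. All the analytic work has already been done in Theorems \ref{thm:perturb_NLS} and \ref{thm:signs_NLS} and packaged into the preservation statement; the only genuine content left is to certify that the hypotheses of Theorem \ref{thm:pres_genus_NLS} are met at $\mu = 2$ and that $G(2) = 2$. The conclusion $G(\mu) = 2$ on an open neighborhood then follows verbatim.

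First I would invoke the rigorous semiclassical asymptotics for the regime $\mu \ge 2$ established in \cite{TVZzero} and continued in time in \cite{TVZlong2}. For the fixed $x$ and any $t > t_0(x)$ past the first break, that analysis produces, at $\mu = 2$ with $f$ given by \eqref{eq:f-function_NLS}, a genus-$2$ configuration: branchpoints $\va_0$ solving the modulation equations $\vec{K}(\va_0, 2) = \vec{0}$, an admissible extended contour $\gamma_0^\infty \in \Gamma(\va_0, 2)$, and a function $h$ solving $RHP(\gamma_0^\infty, \va_0, 2, f)$ that obeys the sign conditions $h \in SC(\gamma_0^\infty)$ of Definition \ref{def:sign_conditions}. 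In the sense of Definition \ref{def:genus_NLS} this gives $G(2) = 2$, provided the non-degeneracy condition below also holds.

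Next I would check the non-degeneracy hypothesis $\frac{h'(z, 2)}{R(z, \va_0)} \neq 0$ on all of $\gamma_0^\infty$. This is exactly the statement that $(x,t)$ is a regular point, i.e.\ off the breaking and caustic curves: a zero of $h'/R$ at a branchpoint signals the onset of nonlinear (higher than square-root) local behavior and hence a break. Since $t > t_0(x)$ is fixed strictly and we work where the genus-$2$ asymptotics of \cite{TVZlong2} is valid, this non-vanishing holds by the regularity of that solution. With every hypothesis of Theorem \ref{thm:pres_genus_NLS} thus verified at $\mu_0 = 2$, the theorem supplies an open neighborhood of $2$ on which $G(\mu) = G(2) = 2$, which is the claim.

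The one point that genuinely needs care --- and the reason $\mu = 2$ is singled out in this paper --- is that $\mu = 2$ is the transition value $T = 0$ at which the branchcut structure of $f$ changes, so a priori $f$ or $f_\mu$ could be singular there and destroy the differentiable $\mu$-dependence underlying Theorems \ref{thm:perturb_NLS} and \ref{thm:pres_genus_NLS}. This is precisely what Lemma \ref{lem:f_analytic} and the subsequent removable-singularity computation for $f_\mu$ in \eqref{eq:f'_mu} dispose of: $f$ is analytic in $\mu$ through $\mu = 2$, and $f_\mu$ extends continuously across $T = 0$. Hence the perturbation machinery applies with $\mu_0 = 2$ sitting in the interior of its domain of validity, the apparent transition at the critical value presents no obstruction, and the genus-$2$ structure persists to both sides of $\mu = 2$, in particular into the region $\mu < 2$.
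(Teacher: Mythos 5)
Your proposal is correct and follows essentially the same route as the paper, whose proof is simply to cite \cite{TVZzero} for the fact that $G(2)=2$ when $t>t_0(x)$ and then apply Theorem \ref{thm:pres_genus_NLS}. Your additional verification of the nondegeneracy condition $h'/R\neq 0$ and of the analyticity of $f$ across $T=0$ is more explicit than the paper's two-sentence argument, but it is the same underlying approach rather than a different one.
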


\begin{proof}
For $\mu=2$ and $t>t_0(x)$ the genus is 2 for all $x$ \cite{TVZzero}. By Theorem \ref{thm:pres_genus_NLS},
the genus is preserved in some open neighborhood of $\mu=2$, including
some open interval for $\mu<2$.
\end{proof}

\subsection{Numerics}

Figure \ref{fig:evolutions} demonstrates comparison solutions of
(\ref{eq:vec_K_system_NLS}) and (\ref{eq:alpha_mu_thm}) in genus 2
(see also Appendix (\ref{eq:vec_K_system_NLS_g2}) and
(\ref{eq:alpha_mu_thm_g2}) for more explicit expressions). The
solutions are practically indistinguishable on the figure with the
absolute difference less than $10^{-3}$ for $\mu\in[1,3]$, which
includes the critical value $\mu=2$, transition between
(solitonless) pure radiation case ($\mu\ge 2$) and the region with
solitons ($0<\mu<2$).

\begin{figure}
\begin{center}
\includegraphics[height=15cm]{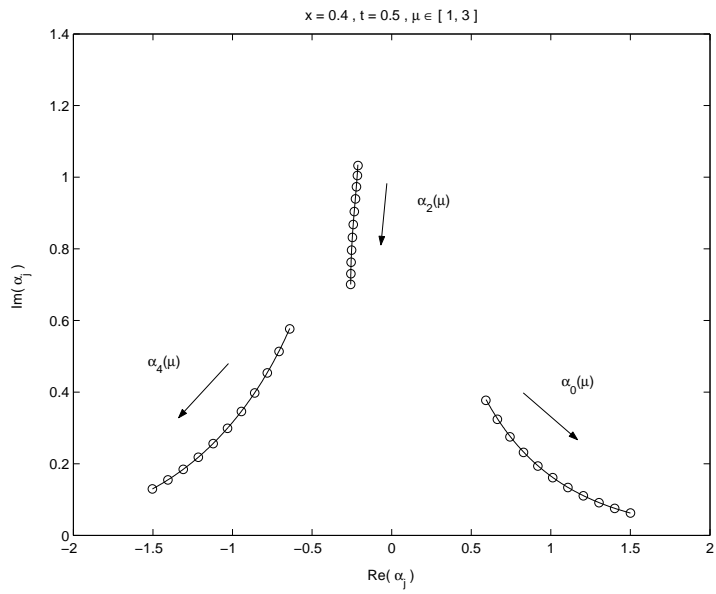}
\caption{\label{fig:evolutions}Comparison of $\mu$ evolution of
$\va=(\alpha_0, \alpha_2,\alpha_4)$ using
(\ref{eq:vec_K_system_NLS_g2}) (solid lines) and (\ref{eq:alpha_mu_thm_g2}) (circles).}
\end{center}
\end{figure}

\section{Appendix}

\subsection{Genus $0$ region}

\begin{figure}
\begin{center}
\includegraphics[height=5cm]{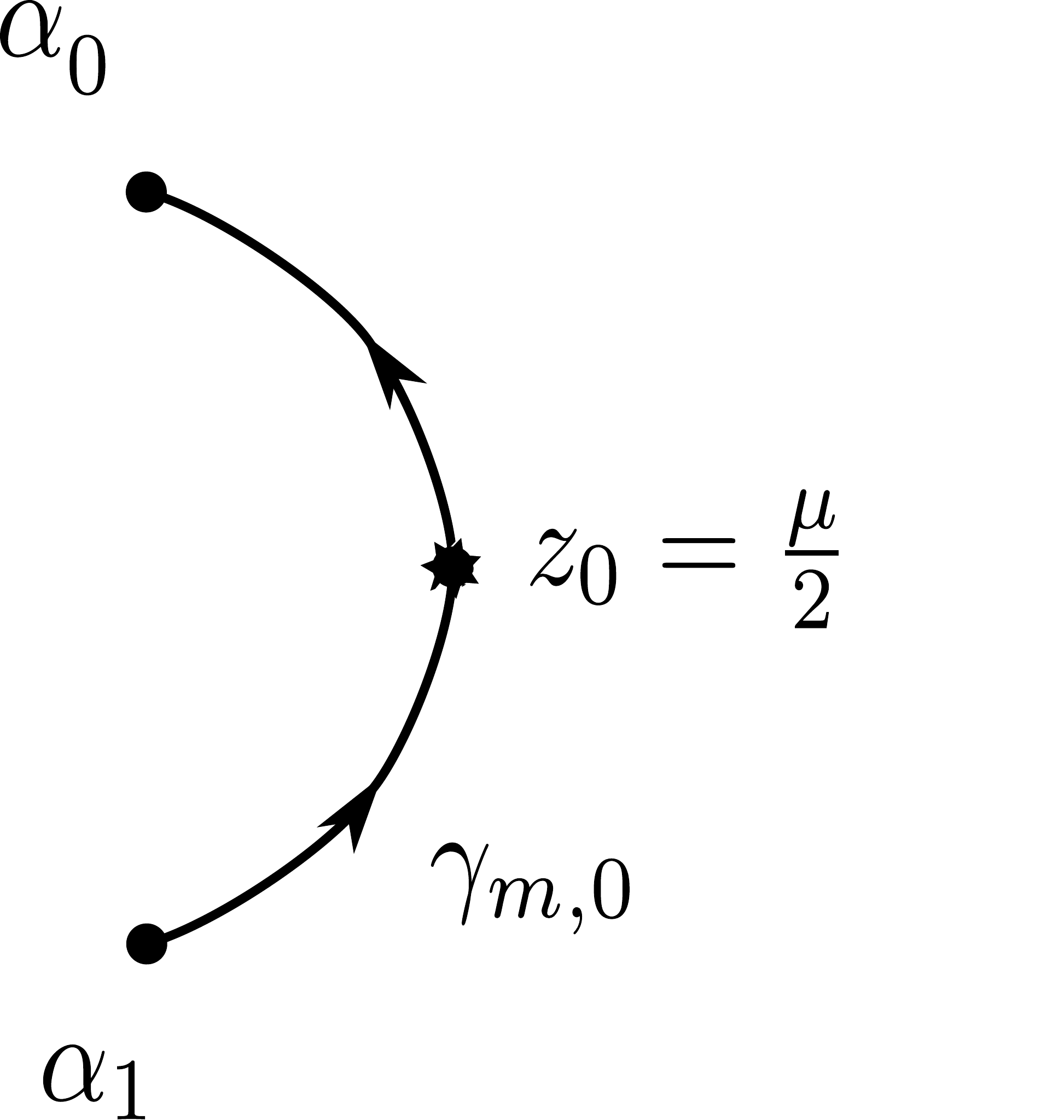}
\includegraphics[height=5cm]{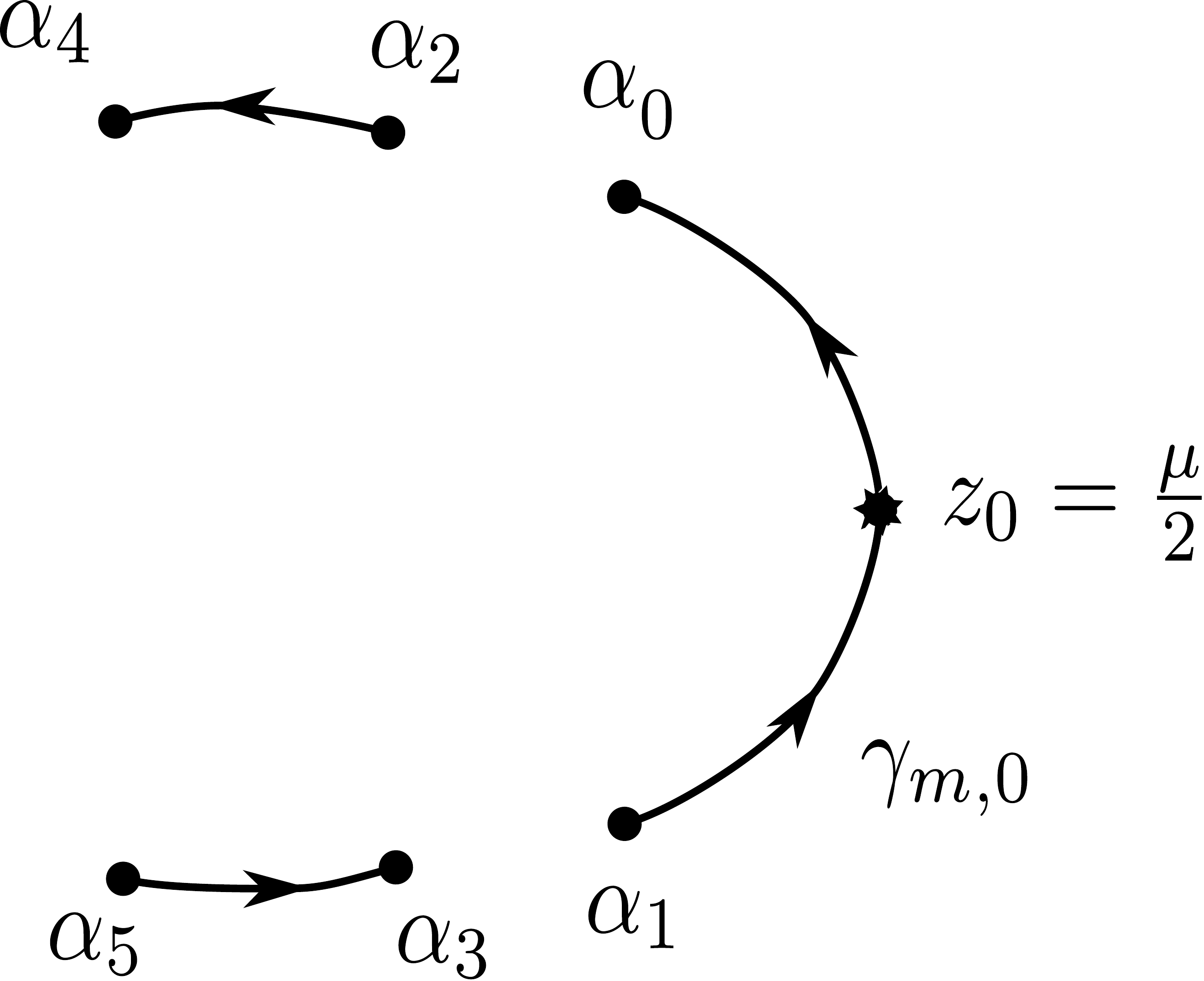}
\caption{\label{fig:NLS_genus_2_contour} The jump contour in the case of genus 0 and genus 2 with complex-conjugate symmetry in the notation of \cite{TVZzero}.}
\end{center}
\end{figure}

It was shown in \cite{TVZzero} that for all $\mu>0$ and for all $x$, there is a breaking curve $t=t_0(x)$
in the $(x,t)$ plane. The region $0\le t<t_0(x)$ has genus $0$ in the sense of genus of the underlying
Riemann surface for the square root
\[
R(z,\alpha_0)=\sqrt{(z-\alpha_0)(z-{\alpha}_1)}, \quad \alpha_1=\overline{\alpha}_0
\]
where the branchcut is chosen along the main arc connecting $\alpha_0$ and $\alpha_1=\overline{\alpha}_0$ through $z=\mm$,
and the branch is fixed by $R(z)\to -z$ as $z\to+\infty$.
The asymptotic solution of NLS is expressed in terms of $\alpha_0=\alpha_0(x,t,\mu)$.

All expressions in the genus $0$ region ($N=0$) have simpler form. In particular:
\begin{equation}
h(z,\alpha_0,\mu)=
\frac{R(z,\alpha_0)}{2\pi i}\oint_{\ggh(\mu)}\frac{f(\zeta,\mu)d\zeta}{(\zeta-z)R(\zeta,\alpha_0)},
\end{equation}
\begin{equation}
K(z,\alpha_0,\mu)=
\frac{1}{2\pi i}\oint_{\ggh(\mu)}\frac{f(\zeta,\mu)d\zeta}{(\zeta-z)R(\zeta,\alpha_0)},
\end{equation}
and with a slight abuse of notation
\begin{equation}
K(\alpha_0,\mu):=K(\alpha_0,\alpha_0,\mu)=
\frac{1}{2\pi i}\oint_{\ggh(\mu)}\frac{f(\zeta,\mu)d\zeta}{(\zeta-\alpha_0)R(\zeta,\alpha_0)},
\end{equation}
and
\begin{equation}
\frac{\partial K}{\partial\mu}(\alpha_0,\mu):=\frac{\partial K}{\partial\mu}(\alpha_0,\alpha_0,\mu)=
\frac{1}{2\pi i}\oint_{\ggh(\mu)}\frac{f_\mu(\zeta,\mu)d\zeta}{(\zeta-\alpha_0)R(\zeta,\alpha_0)}
\end{equation}

\begin{theorem} ($\mu$-perturbation in genus $0$)\\
Consider a finite length non-selfintersecting oriented arc $\gamma_0=[\alpha_0(\mu_0),\overline{\alpha}_0(\mu_0)]\in\Gamma(\va,\mu_0)$ in the complex plane
 with the distinct end points ($\alpha_0\neq\overline{\alpha}_0$) and depending on a parameter $\mu$ (see Figure \ref{fig:NLS_genus_2_contour}).
 Assume $\alpha_0$ and $\mu_0$ satisfy the equation
\[
{K}\left(\alpha_0,\mu_0\right)={0},
\label{eq:K_0_g0}
\]
and $f$ is given by (\ref{eq:f-function_NLS}).
Let $\gamma=\gamma(\va,\mu)$ be the contour of a RH problem which seeks a function
$h(z)$ which satisfies the following conditions
\begin{equation}
\left\{
\begin{array}{l}
h_{+}(z)+h_{-}(z)=0,\ \mbox{on} \ \gg,\\
h(z)+f(z)\ \mbox{is analytic in}\ \mathbb{\overline{C}}\backslash\gg,
\end{array}
\right. \label{eq:RHP_h_NLS_g0}
\end{equation}
Assume that there is a function $h(z,\alpha_0,\mu_0)$ which satisfies (\ref{eq:RHP_h_NLS_g0}) and suppose $\frac{h'(z,\alpha_0,\mu_0)}{R(z,\alpha_0)}\neq 0$
for all $z$ on $\gamma_0$.

Then there is a contour $\gamma(\va,\mu)\in\Gamma(\va,\mu)$ such that the solution $\alpha_0(\mu)$ of the equation
\begin{equation}
{K}\left(\alpha_0,\mu\right)={0} \label{eq:vec_K_system_NLS_g0}
\end{equation}
and $h(z,\alpha(\mu),\mu)$ which solves (\ref{eq:RHP_h_NLS_g0}) are uniquely defined and continuously differentiable in $\mu$ in some
open neighborhood of $\mu_0$.

Moreover,
\begin{equation}
\frac{\partial\alpha_0}{\partial\mu}(\mu)=-\frac{2\pi i
\frac{\partial K}{\partial\mu}(\alpha_0(\mu),\mu)}{
\oint_{\ggh(\mu)}\frac{f'(\zeta,\mu)}{(\zeta-\alpha_0(\mu))R(\zeta,\alpha_0(\mu))}d\zeta},
\label{eq:alpha_mu_thm_g0}
\end{equation}
\begin{equation}
\frac{\partial h}{\partial\mu}(z,\mu)=\frac{R(z,\alpha_0(\mu))}{2\pi i}\oint_{\ggh(\mu)}
\frac{\frac{\partial f}{\partial\mu}(\zeta,\mu)}{(\zeta-z)R(\zeta,\alpha_0(\mu))}d\zeta,
\end{equation}
where $z$ is inside of $\ggh$.
\label{thm:perturb_NLS_g0}
\end{theorem}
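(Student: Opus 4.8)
The plan is to read this statement as the genus-zero ($N=0$) specialization of Theorem~\ref{thm:perturb_NLS} and to trace how each ingredient of that proof degenerates. When $N=0$ the contour carries no complementary arcs and a single main arc $\gamma_0=[\alpha_0,\overline{\alpha}_0]$ with endpoints $\alpha_0$ and $\alpha_1=\overline{\alpha}_0$; hence there are no free constants $\Omega_j$ and the lone constant $W_0$ equals its normalized value $0$, so the jump is the homogeneous $h_++h_-=0$. The auxiliary matrix $A$ in (\ref{eq:A_matrix_def}) is empty, so $D=\det(A)=1$, and $K(z)$ collapses to the single Cauchy-type entry $\frac{1}{2\pi i}\oint_{\ggh}\frac{f(\zeta)}{(\zeta-z)R(\zeta,\alpha_0)}\,d\zeta$. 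With these reductions every formula of Theorem~\ref{thm:perturb_NLS} should specialize directly, the value $D=1$ being exactly what removes $D$ from the denominator of (\ref{eq:alpha_mu_thm_g0}).

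For existence and smoothness of $\alpha_0(\mu)$ I would first invoke Lemma~\ref{lem:gamma_in_Gamma_near} to produce a contour $\gamma(\va,\mu)\in\Gamma(\va,\mu)$ for $(\va,\mu)$ near $(\va_0,\mu_0)$, and then apply the implicit function theorem to the single (complex) equation $K(\alpha_0,\mu)=0$ exactly as in Lemma~\ref{lem:smooth_alpha_mu}. The $C^1$-dependence of $K$ on $(\alpha_0,\mu)$ comes from Lemma~\ref{lem:smooth_F_mu}, and the needed nondegeneracy is
\[
\frac{\partial K(\alpha_0)}{\partial\alpha_0}=\frac{3}{2}\lim_{z\to\alpha_0}K'(z,\alpha_0,\mu_0)\neq 0,
\]
which by the remark following Lemma~\ref{lem:smooth_alpha_mu} is equivalent to $\lim_{z\to\alpha_0}\frac{h'(z,\alpha_0,\mu_0)}{R(z,\alpha_0)}\neq 0$, precisely the standing hypothesis on $\gamma_0$. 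This yields a unique, continuously differentiable $\alpha_0(\mu)$ near $\mu_0$ with $\alpha_0(\mu_0)=\alpha_0$. Differentiating $K(\alpha_0(\mu),\mu)=0$ in $\mu$ and solving gives $\frac{d\alpha_0}{d\mu}=-(\partial K/\partial\mu)/(\partial K/\partial\alpha_0)$; substituting the genus-zero form $\frac{\partial K(\alpha_0)}{\partial\alpha_0}=\frac{1}{2\pi i}\oint_{\ggh}\frac{f'(\zeta)}{(\zeta-\alpha_0)R(\zeta,\alpha_0)}\,d\zeta$ (again using $D=1$) produces (\ref{eq:alpha_mu_thm_g0}).

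For the derivative of $h$ I would characterize $h_\mu:=\partial h/\partial\mu$ through its own scalar RHP rather than differentiating the integral representation term by term. Differentiating the homogeneous jump in $\mu$ gives $h_{\mu,+}+h_{\mu,-}=0$ on $\gamma$, and differentiating the analyticity condition gives that $h_\mu+f_\mu$ is analytic off $\gamma$; the unique solution of this scalar RHP (carrying the same radical $R(\cdot,\alpha_0(\mu))$ and admissible growth) is the claimed Cauchy-integral formula for $\partial h/\partial\mu$, convergent because $f_\mu$ has only an integrable $\log(\zeta-z_0)$ singularity at $z_0=\mm$. The hard part will be justifying that $h_\mu$ genuinely exists and that this differentiation is legitimate, since the jump contour itself moves with $\mu$ --- both through the branchpoint $\alpha_0(\mu)$ and through the singularity $z_0=\mm$ of $f$. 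This is exactly where Lemma~\ref{lem:par_int_zero_NLS} is indispensable: it permits differentiating the defining contour integral under the integral sign despite the $\mu$-dependent contour, thereby securing the $C^1$-dependence of $h$ on $\mu$ that underpins the RHP argument.
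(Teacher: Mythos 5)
Your proposal is correct and follows the paper's own route: the paper states this result in the appendix without a separate proof, treating it as the $N=0$ specialization of Theorem \ref{thm:perturb_NLS}, and your argument accurately reprises that proof's ingredients (Lemmas \ref{lem:gamma_in_Gamma_near}, \ref{lem:par_int_zero_NLS}, \ref{lem:smooth_F_mu}, \ref{lem:smooth_alpha_mu}, the implicit function theorem, and the scalar RHP characterization of $\partial h/\partial\mu$) with the correct degenerations ($D=1$, no constants $W_j,\Omega_j$, homogeneous jump). Nothing further is needed.
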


\subsection{Genus $2$ region}

In the genus 2 region ($N=2$),
with underlying Riemann surface for the square root
\[
R(z)=\sqrt{(z-\alpha_0)(z-{\alpha}_1)(z-\alpha_2)(z-{\alpha}_3)(z-\alpha_4)(z-{\alpha}_5)},
\]
where the branchcut is chosen along the main arcs connecting $\alpha_0$ and $\alpha_1$,
$\alpha_2$ and $\alpha_4$, $\alpha_5$ and $\alpha_3$;
and the branch is fixed by $R(z)\to -z^3$ as $z\to+\infty$.

Taking into account the complex-conjugate symmetry
\begin{equation}
\alpha_{1}=\overline{\alpha}_{0}, \ \alpha_{3}=\overline{\alpha}_{2}, \ \alpha_{5}=\overline{\alpha}_{4}.
\end{equation}
\begin{equation}
h(z)=\frac{R(z)}{2\pi i}
\left[\oint_{\ggh}\frac{f(\zeta)}{(\zeta-z)R(\zeta)}d\zeta +
\oint_{\ggh_{m}}\frac{W}{(\zeta-z)R(\zeta)}d\zeta+
\oint_{\ggh_{c}}\frac{\Omega}{(\zeta-z)R(\zeta)}d\zeta \right],
\label{eq:h(z)_g2}
\end{equation}
where $z$ is inside of $\ggh$, $\ggh_m$ is a loop around the main arc $\gg_m=[\alpha_2,\alpha_4]\cup[\alpha_5,\alpha_3]$,
and $\ggh_c$ is a loop around the complementary arc $\gg_c=[\alpha_0,\alpha_2]\cup[\alpha_3,\alpha_1]$
(see Fig. \ref{fig:NLS_genus_2_contour}). Real constants $W$ and $\Omega$ solve the system
\begin{equation}
\left\{
\begin{array}{l}
\oint_{\ggh}\frac{f(\zeta)}{R(\zeta)}d\zeta +
\Omega\oint_{\ggh_{c}}\frac{d\zeta}{R(\zeta)}+
W\oint_{\ggh_{m}}\frac{d\zeta}{R(\zeta)} =0,\\
\oint_{\ggh}\frac{\zeta f(\zeta)}{R(\zeta)}d\zeta +
\Omega\oint_{\ggh_{c}}\frac{\zeta }{R(\zeta)}d\zeta+
W\oint_{\ggh_{m}}\frac{\zeta }{R(\zeta)}d\zeta =0.
\end{array}\right.
\end{equation}

Other useful expressions written explicitly in genus 2 region
\begin{equation}
K(z)=\frac{1}{2\pi i}\left|\begin{array}{ccc}
\oint_{\ggh_{m}}\frac{d\zeta}{R(\zeta)} & \oint_{\ggh_{m}}\frac{\zeta d\zeta}{R(\zeta)}
& \oint_{\ggh_{m}}\frac{d\zeta}{(\zeta-z)R(\zeta)} \\
\oint_{\ggh_{c}}\frac{d\zeta}{R(\zeta)} & \oint_{\ggh_{c}}\frac{\zeta d\zeta}{R(\zeta)}
& \oint_{\ggh_{c}}\frac{d\zeta}{(\zeta-z)R(\zeta)} \\
\oint_{\ggh}\frac{f(\zeta)d\zeta}{R(\zeta)} & \oint_{\ggh}\frac{\zeta f(\zeta)d\zeta}{R(\zeta)} &
\oint_{\ggh}\frac{f(\zeta)d\zeta}{(\zeta-z)R(\zeta)}
\end{array}\right|\\
\label{eq:K(z)_det_g2}
\end{equation}
or
\begin{equation}
K(z)=\frac{1}{2\pi i}
\left[\oint_{\ggh}\frac{f(\zeta)}{(\zeta-z)R(\zeta)}d\zeta +
\oint_{\ggh_{m}}\frac{W}{(\zeta-z)R(\zeta)}d\zeta+
\oint_{\ggh_{c}}\frac{\Omega}{(\zeta-z)R(\zeta)}d\zeta \right],
\label{eq:K(z)_g2}
\end{equation}
where $z$ is inside of $\ggh$.

\begin{equation}
\frac{\partial K}{\partial\mu}(\alpha_j,\va,\mu)=
\frac{1}{2\pi i}\left|\begin{array}{cccc}
\oint_{\ggh_{m}}\frac{d\zeta}{R(\zeta)} & \oint_{\ggh_{m}}\frac{\zeta d\zeta}{R(\zeta)}
& \oint_{\ggh_{m}}\frac{d\zeta}{(\zeta-\alpha_j)R(\zeta)} \\
\oint_{\ggh_{c}}\frac{d\zeta}{R(\zeta)} & \oint_{\ggh_{c}}\frac{\zeta d\zeta}{R(\zeta)}
& \oint_{\ggh_{c}}\frac{d\zeta}{(\zeta-\alpha_j)R(\zeta)} \\
\oint_{\ggh}\frac{f_\mu(\zeta)d\zeta}{R(\zeta)} & \oint_{\ggh}\frac{\zeta f_\mu(\zeta)d\zeta}{R(\zeta)} &
\oint_{\ggh}\frac{f_\mu(\zeta)d\zeta}{(\zeta-\alpha_j)R(\zeta)}
\end{array}\right|
\label{eq:DK/Dmu_det_NLS_g2}
\end{equation}
or
\begin{equation}
\frac{\partial K}{\partial\mu}(\alpha_j,\va,\mu)=
\frac{1}{2\pi i}
\left[\oint_{\ggh}\frac{f_{\mu}(\zeta)}{(\zeta-\alpha_j)R(\zeta)}d\zeta +
\oint_{\ggh_{m}}\frac{W_{\mu}}{(\zeta-\alpha_j)R(\zeta)}d\zeta+
\oint_{\ggh_{c}}\frac{\Omega_{\mu}}{(\zeta-\alpha_j)R(\zeta)}d\zeta \right],
\label{eq:DK/Dmu_int_NLS_g2}
\end{equation}
where $f_\mu$ is given by (\ref{eq:f'_mu}). Real constants $W_\mu$ and $\Omega_\mu$ solve the system
\begin{equation}
\left\{
\begin{array}{l}
\oint_{\ggh}\frac{f_\mu(\zeta)}{R(\zeta)}d\zeta +
\Omega_\mu\oint_{\ggh_{c}}\frac{d\zeta}{R(\zeta)}+
W_\mu\oint_{\ggh_{m}}\frac{d\zeta}{R(\zeta)} =0,\\
\oint_{\ggh}\frac{\zeta f_\mu(\zeta)}{R(\zeta)}d\zeta +
\Omega_\mu\oint_{\ggh_{c}}\frac{\zeta }{R(\zeta)}d\zeta+
W_\mu\oint_{\ggh_{m}}\frac{\zeta }{R(\zeta)}d\zeta =0.
\end{array}\right.
\end{equation}
\begin{equation}
D=\left|\begin{array}{cc}
\oint_{\ggh_{m}}\frac{d\zeta}{R(\zeta)} & \oint_{\ggh_{m}}\frac{\zeta d\zeta}{R(\zeta)} \\
\oint_{\ggh_{c}}\frac{d\zeta}{R(\zeta)} & \oint_{\ggh_{c}}\frac{\zeta d\zeta}{R(\zeta)}
\end{array}\right|.
\end{equation}

\begin{theorem} ($\mu$-perturbation in genus $2$)\\
Consider a finite length non-selfintersecting contour $\gamma_0$ in the complex plane consisting of a union of oriented arcs  $\gamma_0=\gg_{m}\cup\gg_{c}\cup[\alpha_0,\overline{\alpha}_0]$
 with the distinct arcs end points $\va_0=(\alpha_0,\alpha_2,\alpha_4)$ in the upper half plane and depending on a parameter $\mu$ (see Figure \ref{fig:NLS_genus_2_contour}).
Assume $\va_0$ and $\mu_0$ satisfy a system of equations
\[
\left\{\begin{array}{l}
K\left(\alpha_0,\va_0,\mu_0\right)={0},\\
K\left(\alpha_2,\va_0,\mu_0\right)={0},\\
K\left(\alpha_4,\va_0,\mu_0\right)={0},
\end{array}\right.
\label{eq:K_0_g2}
\]
and $f$ is given by (\ref{eq:f-function_NLS}).
Let $\gamma=\gamma(\va,\mu)$ be the contour of a RH problem which seeks a function
$h(z)$ which satisfies the following conditions
\begin{equation}
\left\{
\begin{array}{l}
h_{+}(z)+h_{-}(z)=0,\ \mbox{on} \ \gg_{m,0}=[\alpha_0,\overline{\alpha}_0],\\
h_{+}(z)+h_{-}(z)=2W,\ \mbox{on} \ \gg_m,\\
h_{+}(z)-h_{-}(z)=2\Omega,\ \mbox{on} \ \gg_c,\\
h(z)+f(z)\ \mbox{is analytic in}\ \mathbb{\overline{C}}\backslash\gg,
\end{array}
\right. \label{eq:RHP_h_NLS_g2}
\end{equation}
where $\Omega=\Omega(\va,\mu)$ and $W=W(\va,\mu)$ are real constants whose numerical values will be determined from the RH conditions.
Assume that there is a function $h(z,\va_0,\mu_0)$ which satisfies (\ref{eq:RHP_h_NLS_g2}) and suppose $\frac{h'(z,\va_0,\mu_0)}{R(z,\va_0)}\neq 0$
for all $z$ on $\gamma_0$.

Then there is a contour $\gamma(\va,\mu)\in\Gamma(\va,\mu)$ such that the solution $\va=\va(\mu)$ of the system
\begin{equation}
\left\{\begin{array}{l}
K\left(\alpha_0,\va,\mu\right)={0},\\
K\left(\alpha_2,\va,\mu\right)={0},\\
K\left(\alpha_4,\va,\mu\right)={0},
\end{array}\right.
\label{eq:vec_K_system_NLS_g2}
\end{equation}
and $h(z,\va(\mu),\mu)$ which solves (\ref{eq:RHP_h_NLS_g2}) are uniquely defined and continuously differentiable in $\mu$ in some
neighborhood of $\mu_0$.

Moreover,
\begin{equation}
\left\{
\begin{array}{l}
\frac{\partial\alpha_0}{\partial\mu}(x,t,\mu)=-\frac{2\pi i\
\frac{\partial
K}{\partial\mu}(\alpha_0,\va,\mu)}{D
\oint_{\ggh(\mu)}\frac{f'(\zeta)}{(\zeta-\alpha_0)R(\zeta)}d\zeta},\\
\quad \\
\frac{\partial\alpha_2}{\partial\mu}(x,t,\mu)=-\frac{2\pi i\
\frac{\partial
K}{\partial\mu}(\alpha_2,\va,\mu)}{D
\oint_{\ggh(\mu)}\frac{f'(\zeta)}{(\zeta-\alpha_2)R(\zeta)}d\zeta},\\
\quad \\
\frac{\partial\alpha_4}{\partial\mu}(x,t,\mu)=-\frac{2\pi i\
\frac{\partial
K}{\partial\mu}(\alpha_4,\va,\mu)}{D
\oint_{\ggh(\mu)}\frac{f'(\zeta)}{(\zeta-\alpha_4)R(\zeta)}d\zeta},
\end{array}
\right.
\label{eq:alpha_mu_thm_g2}
\end{equation}
\begin{equation}
\frac{\partial h}{\partial\mu}(z,x,t,\mu)=\frac{R(z)}{2\pi i}\oint_{\ggh(\mu)}
\frac{\frac{\partial f}{\partial\mu}(\zeta)}{(\zeta-z)R(\zeta)}d\zeta,
\end{equation}
where $z$ is inside of $\ggh$,

Furthermore, $\Omega(\mu)=\Omega(\va(\mu),\mu)$, and $W(\mu)=W(\va(\mu),\mu)$
are defined and continuously differentiable in $\mu$ in some open neighborhood of $\mu_0$, and
\begin{equation}
\frac{\partial\Omega}{\partial\mu}(x,t,\mu)=
-\frac{1}{D}
\left|\begin{array}{cc}
\oint_{\ggh_{m}}\frac{d\zeta}{R(\zeta)} & \oint_{\ggh_{m}}\frac{\zeta d\zeta}{R(\zeta)} \\
{\oint_{\ggh}\frac{ f_{\mu}(\zeta)}{R(\zeta)}d\zeta} & {\oint_{\ggh}\frac{\zeta  f_{\mu}(\zeta)}{R(\zeta)}d\zeta}
\end{array}\right|,
\label{eq:O_mu_thm_g2}
\end{equation}
\begin{equation}
\frac{\partial W}{\partial\mu}(x,t,\mu)=
-\frac{1}{D}
\left|\begin{array}{cc}
{\oint_{\ggh}\frac{ f_{\mu}(\zeta)}{R(\zeta)}d\zeta} & {\oint_{\ggh}\frac{\zeta  f_{\mu}(\zeta)}{R(\zeta)}d\zeta}\\
\oint_{\ggh_{c}}\frac{d\zeta}{R(\zeta)} & \oint_{\ggh_{c}}\frac{\zeta d\zeta}{R(\zeta)}
\end{array}\right|,
\label{eq:W_mu_thm_g2}
\end{equation}
where $\alpha_j=\alpha_j(x,t,\mu)$, $R(\zeta)=R(\zeta,\va(x,t,\mu))$, $f(\zeta)=f(\zeta,x,t,\mu)$, $f_\mu(\zeta)=\frac{\partial f}{\partial\mu}(\zeta,x,t,\mu)$, and
$D=D(\va(x,t,\mu))$.
\label{thm:perturb_NLS_g2}
\end{theorem}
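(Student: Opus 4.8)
The plan is to recognize Theorem \ref{thm:perturb_NLS_g2} as the special case $N=1$ of the already-established Theorem \ref{thm:perturb_NLS}, and to obtain it by direct specialization together with an explicit expansion of the relevant determinants. First I would verify the arc dictionary. Genus $2$ corresponds to $n=2N=2$, hence $N=1$, so there are $4N+2=6$ branchpoints $\alpha_0,\dots,\alpha_5$ constrained by the Schwarz symmetry $\alpha_1=\overline{\alpha}_0$, $\alpha_3=\overline{\alpha}_2$, $\alpha_5=\overline{\alpha}_4$. With $N=1$ the general definitions yield $\gg_{m,0}=[\alpha_1,\alpha_0]=[\overline{\alpha}_0,\alpha_0]$, $\gg_{m,1}=[\alpha_2,\alpha_4]\cup[\alpha_5,\alpha_3]=\gg_m$ and $\gg_{c,1}=[\alpha_0,\alpha_2]\cup[\alpha_3,\alpha_1]=\gg_c$, with single constants $W_1=W$, $\Omega_1=\Omega$ and normalization $W_0=0$. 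Under this dictionary the jump conditions (\ref{eq:RHP_h_NLS_g2}) are precisely (\ref{eq:RHP_h_NLS}), and the three equations (\ref{eq:vec_K_system_NLS_g2}) are the independent content of the full system $\vec{K}(\va,\mu)=\vec 0$.

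Next I would establish existence and smooth dependence by invoking the machinery of Section 3.2. By Lemma \ref{lem:gamma_in_Gamma_near} a contour $\gamma(\va,\mu)\in\Gamma(\va,\mu)$ persists for $(\va,\mu)$ near $(\va_0,\mu_0)$. The hypothesis $h'(z,\va_0,\mu_0)/R(z,\va_0)\neq0$ on $\gamma_0$ is, by the Remark following Lemma \ref{lem:smooth_alpha_mu}, equivalent to $\lim_{z\to\alpha_j^0}K'(z,\va_0,\mu_0)\neq0$ for $j=0,\dots,5$, which is exactly the nondegeneracy required by Lemma \ref{lem:smooth_alpha_mu}. Applying that lemma---whose proof combines the diagonality of the Jacobian $\{\partial K(\alpha_j)/\partial\alpha_l\}$ from \cite{TV_det} with the implicit function theorem---produces a unique $C^1$ branch $\va=\va(\mu)$ with $\va(\mu_0)=\va_0$, and hence $h(z,\va(\mu),\mu)$ solving (\ref{eq:RHP_h_NLS_g2}). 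Here I would spell out the reduction from six equations to the three listed in (\ref{eq:vec_K_system_NLS_g2}): since $f$ is Schwarz-symmetric (\ref{eq:Schwarz_reflection}), $R$ and the loop contours are conjugation-invariant, the equations $K(\alpha_1)=K(\alpha_3)=K(\alpha_5)=0$ are the complex conjugates of $K(\alpha_0)=K(\alpha_2)=K(\alpha_4)=0$; because the Jacobian is diagonal, conjugate branchpoints decouple, so the reduced $3$-variable Jacobian inherits nonsingularity from the hypothesis, and $\partial_\mu\overline{\alpha}_{2i}=\overline{\partial_\mu\alpha_{2i}}$ recovers the lower-half-plane points.

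With this in hand the explicit formulas follow by evaluating the general formulas at $N=1$. Each of (\ref{eq:alpha_mu_thm_g2}) is (\ref{eq:alpha_mu_thm}) restricted to $j\in\{0,2,4\}$, obtained by differentiating the modulation equations in $\mu$ and using $\partial K(\alpha_j)/\partial\alpha_j=\tfrac{D}{2\pi i}\oint_{\ggh}f'/[(\zeta-\alpha_j)R]\,d\zeta$. The formula for $\partial h/\partial\mu$ coincides verbatim with (\ref{eq:h_mu_thm}), the integrand being integrable at $z_0=\mm$ by Lemma \ref{lem:par_int_zero_NLS}. For $W$ and $\Omega$ I would note that at $N=1$ the matrix $A$ of (\ref{eq:A_matrix_def}) is the $2\times2$ block with rows $\ggh_m,\ggh_c$ and columns $\zeta^0,\zeta^1$, whose determinant is the $D$ displayed just before the theorem; then expanding (\ref{eq:Omega_mu_thm}) at $N=1$, $j=1$, all empty index ranges collapse, leaving only the single main-arc (respectively complementary-arc) row and the $f_\mu$ row, which reproduces the $2\times2$ determinants (\ref{eq:O_mu_thm_g2}) and (\ref{eq:W_mu_thm_g2}); continuous differentiability of $W(\mu),\Omega(\mu)$ follows since $D\neq0$ for distinct branchpoints \cite{TV_det} and the entries are $C^1$ in $\mu$ by Lemma \ref{lem:par_int_zero_NLS}.

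The main obstacle is not analytic but bookkeeping: confirming that the reduced three-equation system (\ref{eq:vec_K_system_NLS_g2}) is genuinely equivalent to the full symmetric system $\vec{K}(\va,\mu)=\vec 0$ and that passing to the three upper-half-plane unknowns preserves the nondegeneracy that drives the implicit function theorem. Once the diagonal, conjugation-decoupled structure of the Jacobian is invoked this is immediate, and the remainder---collapsing the $N=1$ determinants and tracking the orientations and signs induced by Schwarz symmetry in the loop integrals $\ggh,\ggh_m,\ggh_c$---is routine. No new estimates beyond those of Lemmas \ref{lem:par_int_zero_NLS}--\ref{lem:smooth_alpha_mu} and Theorem \ref{thm:perturb_NLS} are needed.
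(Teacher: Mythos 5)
Your proposal is correct and follows essentially the route the paper intends: Theorem \ref{thm:perturb_NLS_g2} is stated in the appendix without a separate proof precisely because it is the specialization of Theorem \ref{thm:perturb_NLS} to the six-branchpoint case ($4N+2=6$, one main-arc pair $\gg_m$ and one complementary-arc pair $\gg_c$ besides $\gg_{m,0}$), and your dictionary of arcs, your translation of the hypothesis $h'/R\neq 0$ into the nondegeneracy condition of Lemma \ref{lem:smooth_alpha_mu}, and your collapse of the determinants in (\ref{eq:Omega_mu_thm}) to the $2\times 2$ forms (\ref{eq:O_mu_thm_g2})--(\ref{eq:W_mu_thm_g2}) all match. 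Your explicit remark on the Schwarz-symmetry reduction from six modulation equations to the three listed is a useful detail the paper leaves implicit, but it does not constitute a different method.
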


\end{document}